\newcommand{\de}{\partial}
\newcommand{\db}{\overline{\partial}}
\newcommand{\dbar}{\overline{\partial}}
\newcommand{\ddbar}{\sqrt{-1} \partial \overline{\partial}}
\newcommand{\ov}[1]{\overline{#1}}
\newcommand{\ti}[1]{\tilde{#1}}
\newcommand{\vp}{\varphi}
\newcommand{\ve}{\varepsilon}
\renewcommand{\leq}{\leqslant}
\renewcommand{\geq}{\geqslant}
\newcommand{\be}{\begin{equation}}
\newcommand{\ee}{\end{equation}}
\begin{document}
\newcounter{remark}
\newcounter{theor}
\setcounter{remark}{0}
\setcounter{theor}{1}
\newtheorem{claim}{Claim}
\newtheorem{theorem}{Theorem}[section]
\newtheorem{lemma}[theorem]{Lemma}
\newtheorem{corollary}[theorem]{Corollary}
\newtheorem{proposition}[theorem]{Proposition}
\newtheorem{question}{question}[section]
\newtheorem{defn}{Definition}[theor]
\numberwithin{equation}{section}

\title[Degenerate complex Monge-Amp\`{e}re equations]{$C^{1,1}$ regularity of degenerate complex Monge-Amp\`{e}re equations and some applications}

\author{Jianchun Chu}
\address{Institute of Mathematics, Academy of Mathematics and Systems Science, Chinese Academy of Sciences, Beijing 100190, P. R. China}
\email{chujianchun@gmail.com}

\subjclass[2010]{Primary: 32W20; Secondary: 35J70, 53C15, 32Q60, 35J75, 53C25}

\begin{abstract}
In this paper, we prove a $C^{1,1}$ estimate for solutions of complex Monge-Amp\`{e}re equations on compact almost Hermitian manifolds. Using this $C^{1,1}$ estimate, we show existence of $C^{1,1}$ solutions to the degenerate Monge-Amp\`{e}re equations, the corresponding Dirichlet problems and the singular Monge-Amp\`{e}re equations. We also study the singularities of the pluricomplex Green's function. In addition, the proof of the above $C^{1,1}$ estimate is valid for a kind of complex Monge-Amp\`{e}re type equations. As a geometric application, we prove the $C^{1,1}$ regularity of geodesics in the space of Sasakian metrics.
\end{abstract}

\maketitle

\section{Introduction}
Let $(M,\omega,J)$ be a compact almost Hermitian manifold of real dimension $2n$. We use $g$ and $\nabla$ to denote the corresponding Riemannian metric and Levi-Civita connection. In this paper, we consider the following complex Monge-Amp\`{e}re equation
\begin{equation}\label{CMAE}
\begin{split}
(\omega+\ddbar\vp)^{n} = {} & f \omega^{n}, \\
\quad \omega+\ddbar\vp > 0, \quad {} & \sup_{M}\vp = 0,
\end{split}
\end{equation}
where $f$ is a positive smooth function on $M$. Here we use $\ddbar\vp$ to denote $\frac{1}{2}(dJd\vp)^{(1,1)}$, which agrees with the standard notation when $J$ is integrable (see Section 2 for more explanations).

The complex Monge-Amp\`{e}re equation plays a significant role in complex geometry. When $(M,\omega,J)$ is K\"{a}hler, Yau \cite{Yau78} solved Calabi's conjecture (see \cite{Calabi57}) by proving the existence of solutions to (\ref{CMAE}). This is known as the Calabi-Yau theorem, which states that one can prescribe the volume form of a K\"ahler metric within a given K\"ahler class. There are many corollaries and applications of this result.

It is very interesting to extend the Calabi-Yau theorem to non-K\"{a}hler settings. When $(M,\omega,J)$ is Hermitian, the complex Monge-Amp\`{e}re equation has been studied under some assumptions on $\omega$ (see \cite{Cherrier87,Hanani96,GL10,TW10a,ZZ11}). In \cite{TW10b}, Tosatti-Weinkove solved (\ref{CMAE}) for any Hermitian metric $\omega$.

Recently, Chu-Tosatti-Weinkove \cite{CTW16} solved (\ref{CMAE}) on compact almost Hermitian manifolds. Unlike K\"{a}hler and Hermitian cases, almost Hermitian case is much more complicated. It is hard to obtain the complex Hessian estimate by the analogous computation. Instead, they considered a quantity involving the largest eigenvalue $\lambda_{1}$ of the real Hessian $\nabla^{2}\vp$. Combining the maximum principle and a series of delicate calculations, the real Hessian estimate was obtained. Following the approach of \cite{CTW16}, Chu-Tosatti-Weinkove \cite{CTW17} established the existence of $C^{1,1}$ solutions to the homogeneous complex Monge-Amp\`{e}re equation and solved the open problem of $C^{1,1}$ regularity of geodesics in the space of K\"{a}hler metrics (see \cite{Chen00}). Further applications of these ideas can be found in \cite{Tosatti17,CZ17,CTW18,CHZ17}.

However, the $C^{1,1}$ estimate in \cite{CTW16} depends on $\sup_{M}f$, $\sup_{M}|\de(\log f)|_{g}$ and lower bound of $\nabla^{2}(\log f)$. Hence, except the homogeneous complex Monge-Amp\`{e}re equation, it is impossible to apply this $C^{1,1}$ estimate in the study of degenerate complex Monge-Amp\`{e}re equation. Motivated by this, we prove the following estimate, which improve the above $C^{1,1}$ estimate.

\begin{theorem}\label{Main Theorem}
Let $\vp$ be a smooth solution of (\ref{CMAE}). Then there exists a constant $C$ depending only on $(M,\omega,J)$, $\sup_{M}f$, $\sup_{M}|\de(f^{\frac{1}{n}})|_{g}$ and lower bound of $\nabla^{2}(f^{\frac{1}{n}})$ such that
\begin{equation*}
\sup_{M}|\vp|+\sup_{M}|\de\vp|_{g}+\sup_{M}|\nabla^{2}\vp|_{g} \leq C,
\end{equation*}
where $\nabla$ is the Levi-Civita connection of $g$.
\end{theorem}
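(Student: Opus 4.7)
The plan is to follow the maximum principle framework of Chu-Tosatti-Weinkove (CTW16) for the real Hessian estimate, replacing the role played by $\log f$ with $f^{1/n}$ throughout. The $C^0$ bound depends only on $\sup_M f$ via the Tosatti-Weinkove $L^\infty$ estimate on almost Hermitian manifolds, and a Blocki-type gradient argument applied to $|\partial\varphi|^2_g e^{-A\varphi}$ yields the $C^1$ bound with dependence only on $\|\varphi\|_{C^0}$ and $\sup_M f$. Thus the substantive content of the theorem is the real Hessian estimate.

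For the Hessian bound I would consider a test quantity of the form
\begin{equation*}
Q = \log\lambda_1(\nabla^2\varphi) + \psi(|\partial\varphi|^2_g) + \phi(\varphi),
\end{equation*}
where $\lambda_1$ is the largest eigenvalue of the real Hessian and $\psi,\phi$ are auxiliary functions to be chosen, and apply the complex linearized operator $\mathcal{L} = \tilde g^{i\bar j}\nabla_i\nabla_{\bar j}$ (augmented by the almost-complex torsion corrections of CTW16) at a maximum point $x_0$ of $Q$. After the standard perturbation making $\lambda_1$ a smooth function near $x_0$, the maximum principle gives $\mathcal{L}Q \leq 0$ at $x_0$, and the goal is to derive a contradiction by showing $\mathcal{L}Q > 0$ when $\lambda_1$ is large.

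The crucial new input concerns how the Monge-Amp\`ere equation enters the bound for $\mathcal{L}(\log\lambda_1)$. CTW16 differentiates the logarithmic form $\log\det\tilde g - \log\det g = \log f$, so that its second variation produces a term $\mathcal{L}(\log f)$ requiring a lower bound on $\nabla^2\log f$. I would instead use the $n$-th-root form
\begin{equation*}
(\det\tilde g)^{1/n} = F\,(\det g)^{1/n}, \qquad F := f^{1/n},
\end{equation*}
together with the identity
\begin{equation*}
\mathcal{L}\log\det\tilde g = \frac{n}{(\det\tilde g)^{1/n}}\,\mathcal{L}\bigl((\det\tilde g)^{1/n}\bigr) - \frac{1}{n}\bigl|\partial\log\det\tilde g\bigr|^2_{\tilde g},
\end{equation*}
which rewrites the key term in a way that involves only $\mathcal{L} F$, $\nabla F$, and the background geometry (all controllable under the hypotheses), plus an explicit non-positive quadratic in $\nabla\log\det\tilde g$. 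This non-positive piece --- reflecting the concavity of $A \mapsto (\det A)^{1/n}$ on positive Hermitian matrices --- is precisely what is needed to absorb the third-order $\nabla^3\varphi$ errors that appear in the expansion of $\mathcal{L}(\log\lambda_1)$.

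The main obstacle is to verify that this substitution is compatible with the almost-Hermitian machinery of CTW16. Non-integrability of $J$ forces commutator corrections in $[\nabla_i,\nabla_{\bar j}]$ and extra terms relating the real and complex Hessians, producing a variety of error terms in $\mathcal{L}(\log\lambda_1)$ that are not present in the K\"ahler case. One must check that the concavity contribution above, together with the positivity $\sum_i \tilde g^{i\bar i} \geq c(\sup_M f)>0$ (by AM-GM applied to the upper bound on $\det\tilde g$) and a judicious choice of $\psi$ and $\phi$, suffices to absorb all these error terms, with the potentially troublesome factor $(\det\tilde g)^{-1/n} = F^{-1}(\det g)^{-1/n}$ cancelling through the rewritten right-hand side. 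If this bookkeeping closes, then $\lambda_1 \leq C$ at $x_0$, hence globally, giving $|\nabla^2\varphi|_g \leq C$.
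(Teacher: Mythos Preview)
Your outline has a genuine gap at the heart of the Hessian estimate: the test quantity $Q = \log\lambda_1 + \psi(|\partial\varphi|_g^2) + \phi(\varphi)$ is essentially the one from \cite{CTW16}, and the paper states explicitly (and its computations confirm) that this quantity is \emph{not} sufficient once one works with $f^{1/n}$ in place of $\log f$. The issue is the direction of the concavity change. Differentiating $\log\det\tilde g$ twice in a direction $V_1$ produces the full positive block $\sum_{p,q}\tilde g^{p\bar p}\tilde g^{q\bar q}|(\tilde g_{p\bar q})_{V_1}|^2$; switching to $(\det\tilde g)^{1/n}=f^{1/n}(\det g)^{1/n}$ introduces the extra term $-\frac{1}{n}\bigl|\sum_p\tilde g^{p\bar p}(\tilde g_{p\bar p})_{V_1}\bigr|^2$, and after Cauchy--Schwarz this cancels precisely the diagonal part, leaving only $\sum_{p\neq q}\tilde g^{p\bar p}\tilde g^{q\bar q}|(\tilde g_{p\bar q})_{V_1}|^2$. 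So you end up with \emph{fewer} good third-order terms, not more: the ``non-positive piece'' you highlight is subtracted from the right-hand side you must bound from below, so it works against you rather than absorbing errors. In the almost Hermitian setting the bad third-order terms coming from torsion and from the $S_{i\bar j}^{p}\vp_p$ part of $\tilde g_{i\bar j}$ genuinely require the diagonal contributions you have just lost. The paper remarks that the naive substitution works for $f^{1/m}$ with $m>n$ but fails at the borderline $m=n$.

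The fix in the paper is to strengthen the test quantity by adding a new term $h_1(|\tilde\omega|_g^2)$ with $h_1' \sim M_R^{-2}$, so that
\[
Q = \log\lambda_1(\nabla^2\varphi) + h_1(|\tilde\omega|_g^2) + h_2(|\partial\varphi|_g^2) + e^{-A\varphi}.
\]
Applying $\mathcal{L}$ to $h_1(|\tilde\omega|_g^2)$ produces a positive contribution $2h_1'\sum_{k,l}\tilde g^{i\bar i}|(\tilde g_{k\bar l})_i|^2$, which is exactly the missing reservoir of diagonal-type third-order terms. The delicate part of the paper (Lemmas~\ref{Bad term 1 and 2} and~\ref{Bad term 3}) is then to show that the bad term $K=\lambda_1^{-2}\tilde g^{i\bar i}|\vp_{V_1V_1 i}|^2$ can be split and absorbed into the three available positive pieces: the off-diagonal $\sum_{p\neq q}$ term, the eigenvalue-gap term $2\sum_{\alpha>1}\tilde g^{i\bar i}|\vp_{V_\alpha V_1 i}|^2/\lambda_1(\lambda_1-\lambda_\alpha)$, and the new $h_1'$ term, via a case analysis on the size of $\tilde g(\tilde e_1,\overline{\tilde e}_1)$. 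None of this machinery is visible in your proposal, and the bookkeeping does not close without it.
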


In \cite{Blocki12}, B{\l}ocki proved the similar estimates when $(M,\omega,J)$ is a compact K\"{a}hler manifold with nonnegative bisectional curvature.

We point out that the above $C^{1,1}$ estimate does not depend on the upper bound of $\nabla^{2}(f^{\frac{1}{n}})$, which is very important for Theorem \ref{Application SMAE} and \ref{Application SMAE measure}. Actually, in the proofs of Theorem \ref{Application SMAE} and \ref{Application SMAE measure}, we use $(|s|_{h}^{2}+i^{-1})^{\frac{1}{2}}$ to approximate $|s|_{h}$. However, there is no uniform upper bound of $\nabla^{2}(|s|_{h}^{2}+i^{-1})^{\frac{1}{2}}$ for $i\geq1$.

In addition, if we replace $f^{\frac{1}{n}}$ by $f^{\frac{1}{m}}$ for $m>n$, then Theorem \ref{Main Theorem} can be proved by the similar argument of \cite{CTW16}. But for $f^{\frac{1}{n}}$, it is impossible to adapt the approach of \cite{CTW16}. Some new techniques and auxiliary functions are needed. Later, we will discuss the proof of Theorem \ref{Main Theorem} in details.

For the degenerate complex Monge-Amp\`{e}re equation, it is well known that the solution may be only of class $C^{1,1}$ and not higher. As an application of Theorem \ref{Main Theorem}, we prove the existence of $C^{1,1}$ solutions.
\begin{theorem}\label{Application DMAE}
Let $(M,\omega,J)$ be a compact almost Hermitian manifold of real dimension $2n$. Suppose that $f$ is a nonnegative function on $M$ such that
\begin{equation*}
\sup_{M}f \leq C, \,\, \sup_{M}|\de(f^{\frac{1}{n}})|_{g}\leq C, \,\, \nabla^{2}(f^{\frac{1}{n}}) \geq -Cg,
\end{equation*}
for a constant $C$. If $f\not\equiv 0$, then there exists a pair $(\vp,b)$ where $\vp\in C^{1,1}(M)$ and $b\in\mathbb{R}$, such that
\begin{equation}\label{DMAE}
\begin{split}
(\omega+\ddbar\vp)^{n} = {} & fe^{b} \omega^{n}, \\
\quad \omega+\ddbar\vp \geq 0, \quad {} & \sup_{M}\vp = 0.
\end{split}
\end{equation}
\end{theorem}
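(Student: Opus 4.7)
The plan is a standard approximation and compactness argument, using Theorem \ref{Main Theorem} as the crucial uniform estimate. Let $u:=f^{1/n}\geq 0$; by hypothesis $u$ is Lipschitz and semi-convex with uniform constants. Smooth $u$ on $M$ (by convolution in local charts together with a partition of unity, then adding $\varepsilon$) to obtain positive smooth $u_\varepsilon$ with $u_\varepsilon\to u$ uniformly, $\sup_M u_\varepsilon\le C'$, $\sup_M|\de u_\varepsilon|_g\le C'$, and $\nabla^{2}u_\varepsilon\ge -C'g$. Define $f_\varepsilon:=u_\varepsilon^{n}>0$, smooth. The specific choice $f_\varepsilon=u_\varepsilon^n$ (rather than $f+\varepsilon$) is essential because the hypotheses of Theorem \ref{Main Theorem} are formulated on $f^{1/n}$, not on $f$ itself, and only this approximation preserves them uniformly.

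By the existence theorem of Chu--Tosatti--Weinkove \cite{CTW16}, there exists a smooth pair $(\varphi_\varepsilon,b_\varepsilon)$ solving
\begin{equation*}
(\omega+\ddbar\varphi_\varepsilon)^{n}=f_\varepsilon e^{b_\varepsilon}\omega^{n},\qquad \omega+\ddbar\varphi_\varepsilon>0,\qquad \sup_M\varphi_\varepsilon=0.
\end{equation*}
Since $(f_\varepsilon e^{b_\varepsilon})^{1/n}=e^{b_\varepsilon/n}u_\varepsilon$, once $|b_\varepsilon|$ is controlled the three data bounds required by Theorem \ref{Main Theorem} are automatically uniform in $\varepsilon$.

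The core step is therefore the uniform bound $|b_\varepsilon|\le C$. The lower bound is immediate: at a minimum point $x_{0}$ of $\varphi_\varepsilon$ one has $\ddbar\varphi_\varepsilon(x_{0})\geq 0$, so $f_\varepsilon(x_{0})e^{b_\varepsilon}\geq 1$ and $e^{b_\varepsilon}\ge(\sup_M f_\varepsilon)^{-1}$. For the upper bound, integrate the equation
\begin{equation*}
e^{b_\varepsilon}\int_M f_\varepsilon\,\omega^n=\int_M(\omega+\ddbar\varphi_\varepsilon)^{n}.
\end{equation*}
Since $f\not\equiv 0$ and $f\geq 0$, dominated convergence gives $\int_M f_\varepsilon\omega^{n}\to\int_M f\,\omega^{n}>0$, so the left factor $\int_M f_\varepsilon\omega^n$ is bounded below uniformly. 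Integration by parts (producing only torsion terms controlled by $\|\varphi_\varepsilon\|_{C^{0}}$ in the almost-Hermitian setting) bounds the right-hand side by $C_0+C_1\|\varphi_\varepsilon\|_{L^{\infty}}$. Combining with a Kolodziej-type $L^{\infty}$ estimate for $\varphi_\varepsilon$ (as in \cite{CTW16}), which bounds $\|\varphi_\varepsilon\|_{L^{\infty}}$ by a sublinear power of $e^{b_\varepsilon}$, a standard coupling argument forces $b_\varepsilon\le C$ and $\|\varphi_\varepsilon\|_{L^{\infty}}\le C$ simultaneously.

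With $|b_\varepsilon|\le C$, Theorem \ref{Main Theorem} yields $\sup_M|\varphi_\varepsilon|+\sup_M|\de\varphi_\varepsilon|_g+\sup_M|\nabla^{2}\varphi_\varepsilon|_g\leq C$ uniformly. By Arzel\`a--Ascoli in $C^{1,\alpha}$ and weak-$*$ compactness in $W^{2,\infty}$, a subsequence satisfies $\varphi_\varepsilon\to\varphi$ in $C^{1,\alpha}(M)$ for each $\alpha\in(0,1)$ and $b_\varepsilon\to b\in\mathbb{R}$, with $\varphi\in C^{1,1}(M)$, $\sup_M\varphi=0$, and $\omega+\ddbar\varphi\ge 0$. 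Extracting a further subsequence so that $\nabla^{2}\varphi_\varepsilon\to\nabla^{2}\varphi$ almost everywhere, dominated convergence gives $(\omega+\ddbar\varphi_\varepsilon)^{n}\to(\omega+\ddbar\varphi)^{n}$ in $L^{1}$, while $f_\varepsilon e^{b_\varepsilon}\omega^{n}\to f e^{b}\omega^{n}$ uniformly; passing to the limit yields (\ref{DMAE}). The main obstacle is the coupled control of $\|\varphi_\varepsilon\|_{L^{\infty}}$ and $b_\varepsilon$ in the almost-Hermitian setting; all other steps are routine modulo Theorem \ref{Main Theorem}.
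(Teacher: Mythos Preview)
Your overall strategy matches the paper's: approximate, solve via \cite{CTW16}, bound $b_\varepsilon$ uniformly, apply Theorem \ref{Main Theorem}, pass to a limit. The lower bound on $b_\varepsilon$ and the compactness step are fine. The problem is the upper bound on $b_\varepsilon$.

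You integrate the full equation and claim $\int_M(\omega+\ddbar\varphi_\varepsilon)^n\le C_0+C_1\|\varphi_\varepsilon\|_{L^\infty}$ by integration by parts with ``only torsion terms controlled by $\|\varphi_\varepsilon\|_{C^0}$''. On a general almost Hermitian manifold this is not right: expanding the binomial produces terms $\int_M\omega^{n-k}\wedge(\ddbar\varphi_\varepsilon)^k$ with $k\ge 2$, and since $\omega$ is not closed and $\ddbar\varphi_\varepsilon$ is only the $(1,1)$-part of $\tfrac12 dJd\varphi_\varepsilon$, repeated integration by parts does \emph{not} reduce these to quantities controlled by $\|\varphi_\varepsilon\|_{C^0}$ alone; residual factors of $\ddbar\varphi_\varepsilon$ survive. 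Even granting your bound, you then appeal to a ``Kolodziej-type $L^\infty$ estimate'' giving $\|\varphi_\varepsilon\|_{L^\infty}$ as a \emph{sublinear} power of $e^{b_\varepsilon}$, followed by a ``standard coupling argument''. Neither the estimate with that specific dependence nor the coupling is spelled out, and the $C^0$ estimate in \cite{CTW16} (Proposition 3.1) depends on $\sup_M$ of the right-hand side, i.e.\ on $e^{b_\varepsilon}$ itself, so circularity must be broken carefully.

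The paper sidesteps all of this. Instead of integrating the full determinant, it takes the $n$-th root and uses the arithmetic--geometric mean inequality to replace the nonlinear quantity by a linear one:
\[
f_\varepsilon^{1/n}e^{b_\varepsilon/n}=\Big(\tfrac{\det\ti g}{\det g}\Big)^{1/n}\le 1+\frac{(dJd\varphi_\varepsilon)\wedge\omega^{n-1}}{2\,\omega^n}.
\]
Integrating this requires only a \emph{single} integration by parts, so the torsion contribution is genuinely bounded by $C\int_M|\varphi_\varepsilon|\,\omega^n$. Crucially, the paper then invokes the $L^1$ estimate (Proposition \ref{L1 estimate}), which depends only on $\omega+\ddbar\varphi_\varepsilon>0$ and $\sup_M\varphi_\varepsilon=0$, \emph{not} on the right-hand side of the equation. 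This decouples $b_\varepsilon$ from $\|\varphi_\varepsilon\|_{L^\infty}$ entirely and yields $e^{b_\varepsilon/n}\int_M f_\varepsilon^{1/n}\omega^n\le C$; since $f\not\equiv 0$ gives a uniform lower bound on $\int_M f_\varepsilon^{1/n}\omega^n$, the upper bound on $b_\varepsilon$ follows directly. You should replace your integration-and-coupling argument with this AM--GM/$L^1$ trick.
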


In Theorem \ref{Application DMAE}, the function $f$ may not be $C^{1,1}$. The conclusion still holds when $f$ can be approximated by a sequence of smooth positive functions $f_{i}$ in the sense of $C^{0}$, such that
\begin{equation*}
\sup_{M}f_{i} \leq C, \,\, \sup_{M}|\de(f_{i}^{\frac{1}{n}})|_{g}\leq C, \,\, \nabla^{2}(f_{i}^{\frac{1}{n}}) \geq -Cg,
\end{equation*}
for a constant $C$ which is independent of $i$.

As an application of Theorem \ref{Application DMAE}, we show existence of $C^{1,1}$ solutions to the singular Monge-Amp\`{e}re equations.

\begin{theorem}\label{Application SMAE}
Let $(M,\omega)$ be a compact $n$-dimensional K\"{a}hler manifold and $L$ be a line bundle with Hermitian metric $h$. Given a section $s$ of $L$, a function $F\in C^{2}(M)$ and $N\geq n$ such that
\begin{equation}\label{Application SMAE equation 4}
\int_{M}|s|_{h}^{N}e^{F}\omega^{n} = \int_{M}\omega^{n},
\end{equation}
then there exists a solution $\vp\in C^{1,1}(M)\cap\textrm{PSH}(M,\omega)$ of
\begin{equation*}
(\omega+\ddbar\vp)^{n} = |s|_{h}^{N}e^{F}\omega^{n},
\quad \sup_{M}\vp=0.
\end{equation*}
\end{theorem}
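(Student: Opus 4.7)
The plan is to invoke Theorem \ref{Application DMAE} (in the sequential form stated in the remark following it) applied to $f = |s|_h^N e^F$ with the canonical smooth positive approximation $f_i = (|s|_h^2 + i^{-1})^{N/2} e^F$. Since $f_i \to f$ uniformly and $f \not\equiv 0$ by (\ref{Application SMAE equation 4}), the entire argument reduces to verifying three bounds uniform in $i$: (a) $\sup_M f_i \leq C$; (b) $\sup_M |\de(f_i^{1/n})|_g \leq C$; (c) $\nabla^2(f_i^{1/n}) \geq -Cg$. Once these are established, Theorem \ref{Application DMAE} produces $(\vp, b)$ with $\vp \in C^{1,1}(M) \cap \PSH(M,\omega)$, $\sup_M \vp = 0$, and $(\omega + \ddbar\vp)^n = f e^b \omega^n$; integrating both sides and using $\int_M(\omega + \ddbar\vp)^n = \int_M \omega^n$ in the K\"{a}hler case together with (\ref{Application SMAE equation 4}) forces $e^b = 1$, so $\vp$ solves the desired equation.

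Set $v_i = (|s|_h^2 + i^{-1})^{1/2}$ and $k = N/n \geq 1$, so that $f_i^{1/n} = v_i^k e^{F/n}$. Bound (a) is immediate. For (b), the estimate $|\nabla |s|_h^2|_g \leq C|s|_h \leq Cv_i$ yields $|\nabla v_i|_g \leq C$, hence $|\nabla v_i^k|_g \leq Ck v_i^{k-1} \leq C$ since $k \geq 1$, and the $e^{F/n}$ factor is $C^2$-bounded. The heart of the proof is (c). From
\begin{equation*}
\nabla^2(v_i^k) = \tfrac{k(k-2)}{4}\,v_i^{k-4}\,(\nabla|s|_h^2)^{\otimes 2} + \tfrac{k}{2}\,v_i^{k-2}\,\nabla^2|s|_h^2
\end{equation*}
the case $k \geq 2$ is routine, but for $1 \leq k < 2$ (that is, $n \leq N < 2n$) the first term is negative and each summand separately diverges like $v_i^{k-2}$ near the zero set of $s$. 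I would localize near a smooth point of $\{s = 0\}$, write $|s|_h^2 = e^{-\phi(z)}|z_1|^2$, and compare with the Euclidean model $u^{k/2}$ where $u = |z_1|^2 + i^{-1}$; a direct polar-coordinate computation shows its Hessian has radial eigenvalue $ku^{k/2-2}[(k-1)|z_1|^2 + i^{-1}]$ and angular eigenvalues $ku^{k/2-1}$, both nonnegative precisely when $k \geq 1$, i.e.\ $N \geq n$. The divergent negative and positive pieces therefore cancel at leading order, while the subleading corrections from $\phi$, from the higher-order expansion of $|s|_h^2$ in $z_1$, and from the Christoffel symbols of $g$ are all of order $v_i^{k-1}$, uniformly bounded for $k \geq 1$.

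The sole substantive obstacle is (c) in the range $1 \leq k < 2$: each summand of $\nabla^2(v_i^k)$ is genuinely unbounded as $i \to \infty$ near zeros of $s$, and no term-by-term estimate works. Only the precise leading-order cancellation enabled by the threshold $k \geq 1$ — equivalently, by the hypothesis $N \geq n$ matching the specific exponent $1/n$ appearing in Theorem \ref{Main Theorem} — delivers the uniform lower bound. Once (c) is in hand, Theorem \ref{Application DMAE} applies directly and the proof is complete.
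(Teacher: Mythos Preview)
Your overall strategy matches the paper's: approximate $f = |s|_h^N e^F$ by the smooth positive functions $(|s|_h^2 + i^{-1})^{N/2} e^F$, verify the hypotheses of Theorem~\ref{Application DMAE}, and use the K\"ahler normalization together with (\ref{Application SMAE equation 4}) to force $b = 0$. The difference lies entirely in how you verify the Hessian lower bound (c), and here your route is harder than necessary and contains a genuine gap.

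The paper treats only $v_i = (|s|_h^2 + i^{-1})^{1/2}$, i.e.\ the case $k=1$, and does so by choosing a local unit frame so that $|s|_h^2 = s_R^2 + s_I^2$ with $s_R,s_I$ smooth real functions. A direct computation of $VV(v_i)$ then shows the potentially divergent pieces combine, via the Cauchy--Schwarz inequality $(s_R V(s_R)+s_I V(s_I))^2 \le (s_R^2+s_I^2)\bigl((V s_R)^2+(V s_I)^2\bigr)$, into something bounded below. From this the general case $k = N/n \geq 1$ follows immediately by the chain rule expressed in terms of $v_i$ rather than $|s|_h^2$: in
\[
\nabla^2(v_i^k) = k(k-1)\,v_i^{k-2}(\nabla v_i)^{\otimes 2} + k\,v_i^{k-1}\nabla^2 v_i
\]
the first summand is $\geq 0$ since $k\geq 1$, and the second is $\geq -Cg$ since $v_i^{k-1}$ is bounded. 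No separate treatment of $1\le k<2$ is needed.

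Your expansion in terms of $\nabla|s|_h^2$ and $\nabla^2|s|_h^2$ forces you to confront individually divergent terms and then seek cancellation through the local model $|s|_h^2 = e^{-\phi}|z_1|^2$. That form is only available where $s$ vanishes to order exactly one along a smooth hypersurface; it fails at higher-multiplicity components and at singular points of $\{s=0\}$, neither of which the hypotheses exclude. So (c) is not established in the stated generality. The fix is precisely the paper's observation: the representation $|s|_h^2 = s_R^2 + s_I^2$ holds pointwise everywhere, requires no knowledge of the zero set's structure, and reduces the cancellation to a one-line Cauchy--Schwarz estimate.
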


Using blow-up construction, Theorem \ref{Application SMAE} can be applied in the study of the singularities of the pluricomplex Green's function. More precisely, we prove

\begin{theorem}\label{Application SMAE measure}
Let $(M,\omega)$ be a compact $n$-dimensional K\"{a}hler manifold with $\textrm{Vol}(M,\omega)=\int_{M}\omega^{n}=1$. Assume that $F$ is a smooth function on $M$ such that $\int_{M}e^{F}\omega^{n}=1$. Let $\delta_{p}$ be the Dirac measure concentrated at $p$. Then for $\ve_{0}$ sufficiently small, there exists $\vp\in\textrm{PSH}(M,\omega)\cap C^{1,1}(M\setminus\{p\})$ such that $\vp=\ve_{0}\log|z|^{2}+C^{1,1}$ in local coordinates centered at $p$ and
\begin{equation*}
\frac{(\omega+\ddbar\vp)^{n}}{\omega^{n}} = (1-\ve_{0})e^{F}+\ve_{0}\delta_{p}.
\end{equation*}
\end{theorem}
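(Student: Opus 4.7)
The plan is to reduce Theorem~\ref{Application SMAE measure} to Theorem~\ref{Application SMAE} via a blow-up construction at $p$. Let $\pi\colon \tilde M\to M$ be the blow-up of $M$ at $p$ with exceptional divisor $E\cong \mathbb{P}^{n-1}$, and let $s$ denote the canonical section of the line bundle $L=\mathcal{O}_{\tilde M}(E)$ equipped with a smooth Hermitian metric $h$. For all sufficiently small $\delta>0$, the cohomology class $\pi^{*}[\omega]-\delta\, c_{1}(L,h)$ admits a K\"ahler representative $\tilde\omega$ on $\tilde M$; by the Poincar\'e--Lelong identity, $\tilde\omega=\pi^{*}\omega+\delta\,\ddbar\log|s|_{h}^{2}$ on $\tilde M\setminus E$. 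A binomial computation using $[E]^{n}=(-1)^{n-1}$ and $\pi^{*}[\omega]\cdot [E]=0$ yields $\int_{\tilde M}\tilde\omega^{n}=1-\delta^{n}$. Throughout I assume $n\geq 2$; the case $n=1$ is classical.

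A direct local computation in blow-up coordinates produces the key factorisation
\begin{equation*}
(\pi^{*}\omega)^{n}=|s|_{h}^{2(n-1)}\,\tilde F_{0}\,\tilde\omega^{n}
\end{equation*}
where $\tilde F_{0}$ is smooth and strictly positive on $\tilde M$, reflecting that $(\pi^{*}\omega)^{n}$ vanishes to order exactly $2(n-1)$ along $E$. Setting $\tilde F:=F\circ\pi+\log\bigl((1-\delta^{n})\tilde F_{0}\bigr)$ and using $\int_{M}e^{F}\omega^{n}=1$ gives the compatibility
\begin{equation*}
\int_{\tilde M}|s|_{h}^{2(n-1)}e^{\tilde F}\tilde\omega^{n}=(1-\delta^{n})\int_{M}e^{F}\omega^{n}=1-\delta^{n}=\int_{\tilde M}\tilde\omega^{n}.
\end{equation*}
Since the exponent $N=2(n-1)\geq n$ for $n\geq 2$, Theorem~\ref{Application SMAE} produces $\tilde\varphi\in C^{1,1}(\tilde M)\cap\PSH(\tilde M,\tilde\omega)$ solving $(\tilde\omega+\ddbar\tilde\varphi)^{n}=|s|_{h}^{2(n-1)}e^{\tilde F}\tilde\omega^{n}$.

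I then push the solution down to $M$ by defining $\varphi:=(\tilde\varphi+\delta\log|s|_{h}^{2})\circ\pi^{-1}$, shifted by a constant so that $\sup_{M}\varphi=0$. On $\tilde M\setminus E$ one has $\tilde\omega+\ddbar\tilde\varphi=\pi^{*}(\omega+\ddbar\varphi)$, and combining this with the factorisation of $(\pi^{*}\omega)^{n}$ yields $(\omega+\ddbar\varphi)^{n}=(1-\delta^{n})e^{F}\omega^{n}$ pointwise on $M\setminus\{p\}$; the $C^{1,1}$-regularity of $\tilde\varphi$ transfers to $\varphi$ on $M\setminus\{p\}$ through the biholomorphism $\pi|_{\tilde M\setminus E}$. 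In local blow-up charts one has $\log|s|_{h}^{2}=\pi^{*}\log|z|^{2}+(\text{smooth})$, so $\varphi-\delta\log|z|^{2}$ extends as a bounded $\omega$-psh function near $p$, and standard plurisubharmonic extension gives $\varphi\in\PSH(M,\omega)$. Finally, the global mass identity $\int_{M}(\omega+\ddbar\varphi)^{n}=\int_{M}\omega^{n}=1$ against an absolutely continuous part of mass $1-\delta^{n}$ forces the singular component at $p$ to have total mass $\delta^{n}$; identifying $\varepsilon_{0}=\delta^{n}$ (so that the Lelong number becomes $\delta=\varepsilon_{0}^{1/n}$) matches the target equation $(\omega+\ddbar\varphi)^{n}/\omega^{n}=(1-\varepsilon_{0})e^{F}+\varepsilon_{0}\delta_{p}$.

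The main obstacle will be the careful analysis near $p$. The delicate step is verifying that the push-down $\varphi$ has the specific asymptotic structure $\delta\log|z|^{2}+C^{1,1}$ in local coordinates centered at $p$ --- rather than merely $\delta\log|z|^{2}+$ bounded --- which requires showing that the bounded remainder, coming from $\tilde\varphi\circ\pi^{-1}$ together with the smooth correction from $\log|s|_{h}^{2}-\pi^{*}\log|z|^{2}$, descends through $\pi^{-1}$ as a genuinely $C^{1,1}$ function on a punctured neighborhood of $p$ with locally uniform $C^{1,1}$-norm control across $p$. A related technical point is that the exponent $N=2(n-1)$ is borderline ($N=n$) when $n=2$, so the hypothesis $N\geq n$ in Theorem~\ref{Application SMAE} is applied at its critical case, and one should track the dependence of the $C^{1,1}$ estimate on the degenerating density to ensure the push-down is well-defined as $\delta\to 0$.
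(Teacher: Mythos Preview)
Your approach is essentially the paper's: blow up at $p$, use the factorization $(\pi^*\omega)^n=|s|_h^{2(n-1)}e^{\tilde F}\tilde\omega^n$ (this is exactly the Lemma the paper proves just before the proof), apply Theorem~\ref{Application SMAE} on $\tilde M$, and push down. The only cosmetic difference is that the paper carries an undetermined constant $b$ and fixes it at the end by mass balance, whereas you pre-normalize with the factor $1-\delta^n$.

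Two remarks on your stated obstacles. First, your $C^{1,1}$ worry over-reads the statement: the theorem only claims $\vp\in C^{1,1}(M\setminus\{p\})$, and since $\pi|_{\tilde M\setminus E}$ is a biholomorphism onto $M\setminus\{p\}$, the remainder $\tilde\vp\circ\pi^{-1}$ is automatically $C^{1,1}$ on every compact subset of $M\setminus\{p\}$ (and bounded, since $\tilde\vp$ is bounded on $\tilde M$); no uniform $C^{1,1}$ control \emph{across} $p$ is asserted or needed. Second, there is no limit $\delta\to 0$ to track: $\delta$ is fixed small and Theorem~\ref{Application SMAE} is applied once, at the single exponent $N=2(n-1)\geq n$. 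Incidentally, your mass accounting (Lelong number $\delta$, Dirac mass $\delta^n$) is more careful than the paper's final line, which writes $e^b=1-\varepsilon_0$ although the cohomological volume of $\tilde\omega$ is $1-\varepsilon_0^n$.
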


In \cite{CG09}, Coman-Guedj showed that there are examples of K\"{a}hler manifolds for which $\ve_{0}$ cannot be taken equal to $1$. Using the estimates of \cite{Yau78} and blow up argument, Phong-Sturm \cite{PS14} proved that $\vp=\ve_{0}\log|z|^{2}+C^{1,\alpha}$ near $z=p$ for any $\alpha\in(0,1)$. Our result, which makes use of Theorem \ref{Application DMAE}, improves this regularity to $C^{1,1}$.

For background material and further references on singular Monge-Amp\`{e}re equation, and relation to singular K\"{a}hler-Einstein metrics, we refer the reader to \cite{Tsuji88,Kolodziej98,TZ06,GZ07,EGZ09,BEGZ10,EGZ11,ST12,BBGZ13}. For further information, we refer to the survey \cite{Rubinstein14} and the references therein.

In the above theorems, we assume that $\de M=\emptyset$. When $\de M\neq\emptyset$, the Dirichlet problem has been studied extensively. Caffarelli-Kohn-Nirenberg-Spruck \cite{CKNS85} established the classical solvability for strongly pseudoconvex domains in $\mathbb{C}^{n}$. Guan \cite{Guan98} generalized this result to general domains under the assumption of existence of a subsolution. For further references, we refer the reader to \cite{Chen00,Blocki12,Plis14,HL15}.

Actually, Theorem \ref{Main Theorem} can be applied in the Dirichlet problem for the degenerate Monge-Amp\`{e}re equation. When $\de M\neq\emptyset$, Theorem \ref{Main Theorem} can be regarded as the interior estimate. In this case, the $C^{1,1}$ estimate depends on not only $(M,\omega,J)$, $\sup_{M}f$, $\sup_{M}|\de(f^{\frac{1}{n}})|_{g}$ and lower bound of $\nabla^{2}(f^{\frac{1}{n}})$, but also $\sup_{\de M}|\vp|$, $\sup_{\de M}|\de\vp|_{g}$ and $\sup_{\de M}|\nabla^{2}\vp|_{g}$. Combining this $C^{1,1}$ estimate with the boundary estimates (see \cite[Theorem 3.2']{Blocki12} and \cite[Lemma 7.16]{Boucksom12}), we obtain

\begin{theorem}\label{Application Dirichlet Problem}
Let $(M,\omega,J)$ be a compact $n$-dimensional K\"{a}hler manifold with nonempty smooth boundary, which we assume is weakly pseudoconcave (or Levi-flat). Suppose that $f$ is a nonnegative function on $M$ such that
\begin{equation*}
\sup_{M}f \leq C, \,\, \sup_{M}|\de(f^{\frac{1}{n}})|_{g}\leq C, \,\, \nabla^{2}(f^{\frac{1}{n}}) \geq -Cg,
\end{equation*}
for a constant $C$. We consider the Dirichlet problem
\begin{equation}\label{Dirichlet Problem equation 1}
(\omega+\ddbar\vp)^{n} = f\omega^{n}, \text{ on } M,\quad \vp=\vp_{0},\text{ on }\de M,
\end{equation}
where $\vp_{0}$ is a smooth function on $\de M$. If this problem admits a smooth subsolution, then there exists a solution $\vp\in C^{1,1}(M)\cap\textrm{PSH}(M,\omega)$ of (\ref{Dirichlet Problem equation 1}).
\end{theorem}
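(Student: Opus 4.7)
The plan is to approximate the possibly degenerate right-hand side by a sequence of smooth positive functions, solve each of the resulting non-degenerate Dirichlet problems classically, derive a uniform $C^{1,1}$ bound by combining the interior estimate of Theorem \ref{Main Theorem} with the known boundary estimates, and pass to the limit. The approximation step is modeled on the remark following Theorem \ref{Application DMAE}: one constructs smooth positive functions $f_{i}$ with $f_{i}\to f$ uniformly and
\[
\sup_{M}f_{i}\le C',\qquad \sup_{M}|\de(f_{i}^{1/n})|_{g}\le C',\qquad \nabla^{2}(f_{i}^{1/n})\ge -C'g,
\]
for a constant $C'$ independent of $i$, for example by mollifying $f^{1/n}+1/i$ in local coordinate charts and patching with a partition of unity.

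For each $i$, the classical existence theory for non-degenerate complex Monge-Amp\`ere equations with a subsolution (see \cite{Guan98,Blocki12}) provides a smooth solution $\vp_{i}$ of
\[
(\omega+\ddbar\vp_{i})^{n}=f_{i}\omega^{n}\text{ on }M,\qquad \vp_{i}|_{\de M}=\vp_{0},
\]
with $\omega+\ddbar\vp_{i}>0$, where we use that the given subsolution $\underline{\vp}$ of (\ref{Dirichlet Problem equation 1}) remains a subsolution of the perturbed problem for $i$ large. The boundary estimates of \cite[Theorem 3.2']{Blocki12} and \cite[Lemma 7.16]{Boucksom12}, valid in the weakly pseudoconcave (or Levi-flat) setting, then give a uniform bound on $\sup_{\de M}(|\vp_{i}|+|\de\vp_{i}|_{g}+|\nabla^{2}\vp_{i}|_{g})$ in terms of $(M,\omega)$, $\vp_{0}$, $\underline{\vp}$ and $\sup_{M}f_{i}$. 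Feeding these boundary bounds, together with the uniform controls on $f_{i}^{1/n}$, into the interior version of Theorem \ref{Main Theorem} recalled in the paragraph preceding the statement, produces a uniform global bound $\sup_{M}(|\vp_{i}|+|\de\vp_{i}|_{g}+|\nabla^{2}\vp_{i}|_{g})\le C_{0}$ independent of $i$.

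Finally, the uniform $W^{2,\infty}$ bound lets us extract a subsequence $\vp_{i}$ converging in $C^{1,\alpha}(M)$ for every $\alpha\in(0,1)$ and weakly-$\ast$ in $W^{2,\infty}$ to some $\vp\in C^{1,1}(M)\cap\PSH(M,\omega)$ with $\vp|_{\de M}=\vp_{0}$; the almost-everywhere convergence of $\ddbar\vp_{i}$ combined with dominated convergence then passes the equation $(\omega+\ddbar\vp_{i})^{n}=f_{i}\omega^{n}$ to $(\omega+\ddbar\vp)^{n}=f\omega^{n}$ pointwise a.e., completing the argument. I expect the only delicate point to be the uniformity of the boundary $C^{2}$ estimates of \cite{Blocki12,Boucksom12}: one must verify that they depend on $f_{i}$ only through quantities that are uniformly controlled along the approximation (in particular not through $\sup_{M}|\nabla f_{i}|$ or $\sup_{M}|\nabla^{2}f_{i}|$, both of which blow up along mollification). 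Once this is checked, the interior estimate causes no trouble since the $f^{1/n}$-formulation of Theorem \ref{Main Theorem} was engineered precisely to avoid higher derivatives of $f$ itself.
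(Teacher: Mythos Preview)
Your proposal is correct and follows essentially the same route as the paper: the paper's proof simply states that, given the interior estimate (Theorem~\ref{Main Theorem} with boundary), it suffices to establish the boundary $C^{0}$, $C^{1}$, and $C^{2}$ estimates, and then invokes exactly the same references you do---\cite[Section~3, Theorem~3.2']{Blocki12} and \cite[Lemma~7.16]{Boucksom12}---with the Levi-flat and weakly pseudoconcave cases treated separately. The approximation and limit-passing steps you spell out are left implicit in the paper; your caveat about the dependence of the boundary $C^{2}$ estimates is legitimate but is resolved by the fact that B{\l}ocki's boundary estimates already depend on $f$ only through $\|f^{1/n}\|_{C^{1,1}}$-type quantities, which your approximation controls uniformly.
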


Theorem \ref{Application Dirichlet Problem} generalizes Corollary 1.3 in \cite{CTW17}, where Chu-Tosatti-Weinkove showed existence of $C^{1,1}$ solution to the homogeneous complex Monge-Amp\`{e}re equation (i.e., $f\equiv0$).

The complex Monge-Amp\`{e}re equation also plays an important role in Sasakian geometry. For the reader's convenience, let us recall the definition and some basic properties of Sasakian manifold. For general properties of Sasakian manifold, we refer the reader to the book \cite{BG08}. A Sasakian manifold $(N,g_{N})$ is a $(2m+1)$-dimensional Riemannian manifold such that the cone manifold
\begin{equation*}
(C(N),g_{c}) := (N\times \mathbb{R}^{+},r^{2}g_{N}+dr^{2})
\end{equation*}
is K\"{a}hler. There exists a Killing field $\xi$ of unit length on $N$, which is called Reeb vector field. We define tensor field $\Phi$ and contact $1$ form $\eta$ by
\begin{equation*}
\Phi(X) = \nabla_{X}\xi,~ \eta(X) = g_{N}(X,\xi) ~\text{~for any $X\in TN$},
\end{equation*}
where $\nabla$ is the Levi-Civita connection of $g_{N}$. We write $\mathcal{D}=\textrm{Ker}\{\eta\}$. Then $\Phi|_{\mathcal{D}}$ is a complex structure on $\mathcal{D}$, and $(\mathcal{D},\Phi|_{\mathcal{D}},d\eta)$ gives a transverse K\"{a}hler structure with K\"{a}hler form $\frac{1}{2}d\eta$ and Riemannian metric $g_{N}^{T}$ defined by
\begin{equation*}
g_{N}^{T}(X,Y) = \frac{1}{2}d\eta(X,\Phi|_{\mathcal{D}}(Y)) ~\text{~for any $X,Y\in\mathcal{D}$}.
\end{equation*}
Let $\mathcal{D}^{\mathbb{C}}$ be the complexification of $\mathcal{\mathcal{D}}$. We have the following decomposition
\begin{equation}\label{Decomposition of D}
\mathcal{D} = \mathcal{D}^{(1,0)}\oplus\mathcal{D}^{(0,1)},
\end{equation}
where $\mathcal{D}^{(1,0)}$ and $\mathcal{D}^{(0,1)}$ are the $\sqrt{-1}$ and $-\sqrt{-1}$ eigenspaces of $\Phi|_{\mathcal{D}}$.

A $p$ form $\theta$ on $(N,g_{N})$ is called basic if
\begin{equation*}
i_{\xi}\theta = 0 \text{~and~} L_{\xi}\theta = 0,
\end{equation*}
where $i_{\xi}$ is the contraction with $\xi$ and $L_{\xi}$ is the Lie derivative with respect to $\xi$.
In particular, when $p=0$, we use $C_{B}^{\infty}(N)$ to denote the set of all smooth basic functions on $N$, i.e.,
\begin{equation*}
C_{B}^{\infty}(N) = \{ \vp\in C^{\infty}(N)~|~\xi(\vp)=0 \}.
\end{equation*}
Let $\wedge_{B}^{p}(N)$ be the bundle of basic $p$ form. By (\ref{Decomposition of D}), there is a natural decomposition of its complexification
\begin{equation*}
\wedge_{B}^{p}(N)\otimes\mathbb{C} = \bigoplus_{i+j=p}\wedge_{B}^{i,j}(N),
\end{equation*}
where $\wedge_{B}^{i,j}(N)$ denotes the bundle of basic $(i,j)$ form. Accordingly, we define the corresponding operators $\de_{B}$ and $\dbar_{B}$ by
\begin{equation*}
\de_{B}: \wedge_{B}^{i,j}(N)\rightarrow\wedge_{B}^{i+1,j}(N),~
\dbar_{B}: \wedge_{B}^{i,j}(N)\rightarrow\wedge_{B}^{i,j+1}(N),
\end{equation*}
and set
\begin{equation*}
d_{B} = d|_{\wedge_{B}^{p}},~ d_{B}^{c} = \frac{1}{2}\sqrt{-1}(\dbar_{B}-\de_{B}).
\end{equation*}
It is clear that
\begin{equation*}
d_{B} = \de_{B}+\dbar_{B},~ d_{B}d_{B}^{c} = \sqrt{-1}\de_{B}\dbar_{B},~ d_{B}^{2}=0,~ (d_{B}^{c})^{2}=0.
\end{equation*}

Let $(N,g_{N})$ be a compact $(2m+1)$-dimensional Sasakian manifold. We write $\mathcal{H}$ for the space of Sasakian metrics, which can be parameterized by the space (see \cite{GZ10})
\begin{equation*}
\{ \vp\in C_{B}^{\infty}(N)~|~\eta_{\vp}\wedge(d\eta_{\vp})^{n}\neq0 \},
\end{equation*}
where
\begin{equation*}
\eta_{\vp} = \eta+d_{B}^{c}\vp,~
d\eta_{\vp} = d\eta+\sqrt{-1}\de_{B}\dbar_{B}\vp.
\end{equation*}
In \cite{GZ10}, Guan-Zhang introduced a geodesic equation in $\mathcal{H}$. For each Sasakian potential $\vp\in\mathcal{H}$, the tangent space $T_{\vp}\mathcal{H}$ is $C_{B}^{\infty}(N)$ and $d\mu_{\vp}=\eta_{\vp}\wedge(d\eta_{\vp})^{n}$ defines a measure on $N$. On this infinite dimensional manifold $\mathcal{H}$, the Riemannian metric is defined by
\begin{equation*}
(\psi_{1},\psi_{2})_{\vp} = \int_{N}\psi_{1}\psi_{2}d\mu_{\vp} ~\text{~for any $\psi_{1},\psi_{2}\in T_{\vp}\mathcal{H}$}.
\end{equation*}
For any $\vp_{0},\vp_{1}\in\mathcal{H}$, let $\vp:[0,1]\rightarrow\mathcal{H}$ be a path connecting them. The corresponding geodesic equation is
\begin{equation}\label{Geodesic equation 1}
\left\{ \begin{array}{ll}
  \frac{\de^{2}\vp}{\de t^{2}}-\frac{1}{4}\left|d_{B}\frac{\de\vp}{\de t}\right|_{g_{\vp}}^{2} = 0, \\[4mm]
  \vp(\cdot,0) = \vp_{0},~ \vp(\cdot,1) = \vp_{1},
\end{array}\right.
\end{equation}
where $g_{\vp}$ is the Sasakian metric determined by $\vp$, i.e.,
\begin{equation*}
g_{\vp} = \frac{1}{2}d\eta_{\vp}\circ(\textrm{Id}\otimes\Phi_{\vp})+\eta_{\vp}\otimes\eta_{\vp},~
\Phi_{\vp} = \Phi-\xi\otimes(d_{B}^{c}\vp)\circ\Phi.
\end{equation*}
In \cite{GZ12}, Guan-Zhang reduced (\ref{Geodesic equation 1}) to the Dirichlet problem of complex Monge-Amp\`{e}re type equation on the K\"{a}hler cone $N\times[1,\frac{3}{2}]\subset C(N)$. More precisely, they define a function $\psi$ and a $(1,1)$ form $\Omega_{\psi}$ on $N\times[1,\frac{3}{2}]$ by
\begin{equation*}
\psi(\cdot,r) = \vp(\cdot,2r-2)+4\log r
\end{equation*}
and
\begin{equation}\label{Definition of Omega psi}
\Omega_{\psi} = \omega_{c}+\frac{r^{2}}{2}\sqrt{-1}\left(\de\dbar\psi-\frac{\de\psi}{\de r}\de\dbar r\right),
\end{equation}
where $\omega_{c}$ is the K\"{a}hler form of $(C(N),g_{c})$. As noted in \cite{GZ12}, the path $\vp$ is a geodesic connecting $\vp_{0}$ and $\vp_{1}$ if and only if $\psi$ solves the following Dirichlet problem on $N\times[1,\frac{3}{2}]$
\begin{equation}\label{Geodesic equation 2}
\left\{ \begin{array}{ll}
  (\Omega_{\psi})^{m+1} = 0, \\[2mm]
  \Omega_{\psi} \geq 0, \\[2mm]
  \psi(\cdot,1) = \psi_{1},~ \psi(\cdot,\frac{3}{2}) = \psi_{\frac{3}{2}}.
\end{array}\right.
\end{equation}
In order to solve (\ref{Geodesic equation 2}), for any $\ve\in(0,1)$, Guan-Zhang \cite{GZ12} considered the perturbation geodesic equation
\begin{equation}\label{Perturbation geodesic equation}
\left\{ \begin{array}{ll}
  (\Omega_{\psi})^{m+1} = \ve f\omega_{c}^{m+1}, \\[2mm]
  \Omega_{\psi} > 0, \\[2mm]
  \psi(\cdot,1) = \psi_{1},~ \psi(\cdot,\frac{3}{2}) = \psi_{\frac{3}{2}},
\end{array}\right.
\end{equation}
where $f$ is a positive basic function. They proved that there exists a smooth solution $\psi_{\ve}$ of (\ref{Perturbation geodesic equation}), and established the $C_{w}^{2}$ estimate (see \cite[Theorem 1]{GZ12})
\begin{equation*}
\|\psi_{\ve}\|_{C_{w}^{2}(N\times[1,\frac{3}{2}],g_{c})}
:= \|\psi_{\ve}\|_{C^{1}(N\times[1,\frac{3}{2}],g_{c})}+\sup_{N\times[1,\frac{3}{2}]}|\Delta_{c}\psi_{\ve}| \leq C,
\end{equation*}
where $\Delta_{c}$ is the Laplace-Beltrami operator of $g_{c}$ and $C$ is a constant depending only on $(N,g_{N})$, $\|f^{\frac{1}{m}}\|_{C^{2}(N\times[1,\frac{3}{2}],g_{c})}$, $\|\psi_{1}\|_{C^{2,1}(N,g_{N})}$ and $\|\psi_{\frac{3}{2}}\|_{C^{2,1}(N,g_{N})}$. Letting $\ve\rightarrow0$, Guan-Zhang showed existence of $C_{w}^{2}$ solution of (\ref{Geodesic equation 2}). This implies that any two Sasakian potentials $\vp_{0},\vp_{1}$ can be joined by a $C_{w}^{2}$ geodesic. Clearly, this geodesic is $C^{1,\alpha}$ for any $\alpha\in(0,1)$.

When $m=1$, (\ref{Geodesic equation 2}) is equivalent to the geodesic equation in the space of volume forms on Riemannian manifold with fixed volume (see \cite{Donaldson10}). In this setting, Chen-He \cite{CH11} proved the geodesic is $C_{w}^{2}$, and Chu \cite{Chu18} improved this regularity to $C^{1,1}$.

Actually, Sasakian geometry can be considered as odd dimensional counterpart of K\"{a}hler geometry. The space of K\"{a}hler metrics can be endowed with a natural Riemannian structure (see \cite{Mabuchi87,Semmes92,Donaldson99}). Chen \cite{Chen00} showed any two K\"{a}hler potentials can be connected by a $C_{w}^{2}$ geodesic. As mentioned before, Chu-Tosatti-Weinkove \cite{CTW17} improved this regularity to $C^{1,1}$.

In two settings mentioned above, the $C^{1,1}$ regularity is optimal (see \cite{LV13,DL12,Darvas14}). It was expected that analogous result can be proved in the Sasakian case. In this paper, we prove the $C^{1,1}$ regularity of geodesics in the space of Sasakian metrics.

\begin{theorem}\label{C11 regularity of geodesics}
Let $(N,g_{N})$ be a compact $(2m+1)$-dimensional Sasakian manifold. For any two Sasakian potentials $\vp_{0},\vp_{1}\in\mathcal{H}$, the geodesic connecting them is $C^{1,1}$.
\end{theorem}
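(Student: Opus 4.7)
The plan is to transplant the strategy of Chu-Tosatti-Weinkove \cite{CTW17} (which handled the analogous Kähler result using Theorem \ref{Main Theorem}) into the Sasakian setting via the Guan-Zhang cone reduction. By the equivalence of (\ref{Geodesic equation 1}) and (\ref{Geodesic equation 2}) and the correspondence $\psi(\cdot,r) = \vp(\cdot,2r-2) + 4\log r$, it suffices to exhibit a $C^{1,1}$ solution $\psi$ of the Dirichlet problem (\ref{Geodesic equation 2}) on $N \times [1, \tfrac{3}{2}]$.

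I would approximate (\ref{Geodesic equation 2}) by Guan-Zhang's perturbation (\ref{Perturbation geodesic equation}) with, say, $f \equiv 1$ and let $\ve \downarrow 0$. Guan-Zhang have already produced smooth solutions $\psi_{\ve}$ together with uniform $C^0$, $C^1$, and boundary $C^2$ estimates; what is missing is an interior real Hessian estimate for $\psi_{\ve}$ independent of $\ve$. Their argument only yields $\sup|\Delta_c \psi_{\ve}| \le C$, which in the degenerate limit falls short of $C^{1,1}$.

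As advertised in the introduction, the method of Theorem \ref{Main Theorem} extends to the complex Monge-Ampère type equation (\ref{Perturbation geodesic equation}), producing an interior bound on $|\nabla^2 \psi_{\ve}|_{g_c}$ in terms of $\sup(\ve f)$, $\sup|\de((\ve f)^{1/(m+1)})|_{g_c}$, and a lower bound for $\nabla^2((\ve f)^{1/(m+1)})$. The critical point is that no upper bound on $\nabla^2((\ve f)^{1/(m+1)})$ is needed. Since $(\ve f)^{1/(m+1)} = \ve^{1/(m+1)} f^{1/(m+1)}$, the latter two quantities are $O(\ve^{1/(m+1)})$ (and the first is $O(\ve)$), so all three remain uniformly bounded as $\ve \to 0$. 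Combining this interior estimate with Guan-Zhang's boundary $C^2$ bound yields a uniform global $C^{1,1}$ estimate on $\psi_{\ve}$. Extracting a weak $C^{1,1}$ subsequential limit then produces $\psi \in C^{1,1}(N \times [1, \tfrac{3}{2}])$ solving (\ref{Geodesic equation 2}), which corresponds to a $C^{1,1}$ geodesic joining $\vp_0$ and $\vp_1$.

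The principal obstacle will be carrying out the adaptation of Theorem \ref{Main Theorem} to (\ref{Perturbation geodesic equation}). Unlike (\ref{CMAE}), here $\Omega_{\psi}$ is not of the form $\omega + \ddbar\psi$: the extra term $-\frac{\de \psi}{\de r}\de\dbar r$ inside (\ref{Definition of Omega psi}) couples the real Hessian of $\psi$ to its first derivatives, and the maximum principle argument of \cite{CTW16,CTW17}, tracking the largest eigenvalue $\lambda_1$ of $\nabla^2 \psi_{\ve}$ together with carefully chosen auxiliary quantities, must be reworked to absorb this cross term. Additionally, one must respect the basic/$S^1$-invariant structure inherited from $N$ (so that $\xi \psi_{\ve} = 0$), handle the cone geometry of $(C(N), g_c)$ (and in particular the blow-up of curvature quantities near $r \to 0$, which is however outside our slice), and ensure the interior maximum principle is applied at points where $\psi_{\ve}$ is bounded away from the boundary $N \times \{1, \tfrac{3}{2}\}$ so that it matches Guan-Zhang's boundary estimates cleanly.
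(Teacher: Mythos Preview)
Your proposal is correct and follows essentially the same route as the paper: reduce via Guan--Zhang to the Dirichlet problem (\ref{Geodesic equation 2}), approximate by (\ref{Perturbation geodesic equation}), invoke Guan--Zhang's $C^1$ and boundary $C^{1,1}$ bounds, supply the missing interior real-Hessian estimate by the method of Proposition~\ref{Second order estimate}, and pass to the limit. The paper dissolves your ``principal obstacle'' very cleanly: after the rescaling $\omega = 2r^{-2}\omega_c$, the extra first-order piece $-\frac{\partial\psi}{\partial r}\,\partial\bar\partial r$ is absorbed into a tensor $T_{i\bar j}^{p}, T_{i\bar j}^{\bar p}$ (replacing $S$ in (\ref{Expression of tilde g})) that still satisfies the conjugation symmetry (\ref{Property of S}), so the proof of Proposition~\ref{Second order estimate} carries over verbatim with $S\mapsto T$ and no genuine reworking is needed; your concerns about the basic/$S^1$-structure and cone geometry play no role.
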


To prove Theorem \ref{C11 regularity of geodesics}, it suffices to establish the $C^{1,1}$ estimate for the perturbation geodesic equation (\ref{Perturbation geodesic equation}). Since $\Omega_{\psi}$ involves the first order term (see (\ref{Definition of Omega psi})), (\ref{Perturbation geodesic equation}) is much more complicated than the standard complex Monge-Amp\`{e}re equation. Fortunately, the proof of Proposition \ref{Second order estimate} is still valid for (\ref{Perturbation geodesic equation}). In Section 6, we will introduce a kind of complex Monge-Amp\`{e}re type equation (\ref{Complex Monge-Ampere type equation}), and (\ref{Perturbation geodesic equation}) can be regarded as a special case of (\ref{Complex Monge-Ampere type equation}). By the same proof of Proposition \ref{Second order estimate}, we derive the $C^{1,1}$ interior estimate for (\ref{Complex Monge-Ampere type equation}) (see Proposition \ref{C11 interior estimate}), which gives an extension of Proposition \ref{Second order estimate}. Then Theorem \ref{C11 regularity of geodesics} follows from Proposition \ref{C11 interior estimate} and \cite[Theorem 1, Proposition 3]{GZ12} ($C_{w}^{2}$ estimate and $C^{1,1}$ boundary estimate).

\bigskip

We now discuss the proof of Theorem \ref{Main Theorem}. Zero order estimate was proved in \cite{CTW16}. For the first order estimate, we adapt an approach of B{\l}ocki \cite[Theorem 1]{Blocki09} in the K\"{a}hler case. However, there are more troublesome terms arising from the non-integrability. We show that these terms can be controlled in Section 3.

The heart of this paper is Section 4, where we prove the second order estimate. Compared to the second order estimate of \cite{CTW16}, our method is quite different. The main reason is that the concavity of $(\det\ti{g})^{\frac{1}{n}}$ is weaker than that of $\log\det\ti{g}$. Then there are less "good" third order terms when we differentiate the equation twice. Hence, it is impossible to control "bad" third order terms by the similar argument in \cite{CTW16}.

In order to overcome this difficulty, we apply the maximum principle to a new quantity. Compared to the quantity of \cite{CTW16}, we add a new term involving $|\ti{\omega}|_{g}^{2}$. Crucially, this gives more "good" third order terms, which can be used to control "bad" third order terms. On the other hand, we use covariant derivatives with respect to the Levi-Civita connection $\nabla$. Then there is no third order term when we commute derivatives (see (\ref{Commutation formula}) ($k=2$)). And this is the main reason why we do not use the Chern connection. For general almost Hermitian manifold, $(\nabla^{2}\vp)^{(1,1)}$ is different from $\de\dbar\vp$ (they coincide in the K\"{a}hler case). We introduce a new tensor field $S$ (see (\ref{Definition of S})) to describe this difference. Because of this, more "bad" third order terms appear when we differentiate the equation twice. Fortunately, these terms can be controlled by using the maximum principle (see (\ref{Main inequality equation 2}), (\ref{Main inequality equation 3})). After a series of delicate calculations and estimates, we prove the second order estimate.

We expect that the method we introduced in this paper will adapt to other nonlinear PDEs on compact almost Hermitian manifolds.

\section{Basic results and notation}
Let $M$ be a compact manifold of real dimension $2n$. Recall that an almost complex structure $J$ on $M$ is a bundle automorphism of the tangent bundle $TM$ satisfying $J^{2}=-\textrm{Id}$. Let $T^{\mathbb{C}}M$ be the complexified tangent space. Then we have the natural decomposition
\begin{equation*}
T^{\mathbb{C}}M = T_{\mathbb{C}}^{(1,0)}M\oplus T_{\mathbb{C}}^{(0,1)}M,
\end{equation*}
where $T_{\mathbb{C}}^{(1,0)}M$ and $T_{\mathbb{C}}^{(0,1)}M$ are the $\sqrt{-1}$ and $-\sqrt{-1}$ eigenspaces of $J$. For any $1$ form $\alpha$ on $M$, we define
\begin{equation*}
J\alpha(V) = -\alpha(JV) \quad \text{for any $V\in TM$.}
\end{equation*}
Then we have the similar decomposition of $T^{\mathbb{C}}M^{*}$ into the $\sqrt{-1}$ and $-\sqrt{-1}$ eigenspaces, spanned by the $(0,1)$ and $(1,0)$ forms respectively. And every $k$ form can be expressed uniquely as a linear combination of $(p,q)$ forms.

Let $g$ be a Riemannian metric on $M$. $(M,g,J)$ is called an almost Hermitian manifold if
\begin{equation*}
g(V_{1},V_{2}) = g(JV_{1},JV_{2}) \quad \text{for any $V_{1},V_{2}\in TM$.}
\end{equation*}
We define $(1,1)$ form $\omega$ by
\begin{equation*}
\omega(V_{1},V_{2}) =  g(JV_{1},V_{2}) \quad \text{for any $V_{1},V_{2}\in TM$.}
\end{equation*}
It then follows that
\begin{equation*}
g(V_{1},V_{2}) = \omega(V_{1},JV_{2}).
\end{equation*}
Hence, we often use $(M,\omega,J)$ to denote $(M,g,J)$ for convenience.

For any $(p,q)$ form $\beta$, we define
\begin{equation*}
\de\beta = (d\beta)^{p+1,q} \text{~and~} \dbar\beta = (d\beta)^{p,q+1}.
\end{equation*}
By direct calculation, for any $f\in C^{2}(M)$, we have
\begin{equation*}
\ddbar f = \frac{1}{2}(dJdf)^{(1,1)}.
\end{equation*}
For any two $(1,0)$ vector fields $X,Y$, we also have the following formula (see e.g. \cite[(2.5)]{HL15})
\begin{equation}\label{ddbar formula}
(\de\db \vp)(X,\ov{Y}) = X\ov{Y}(\vp)-[X,\ov{Y}]^{(0,1)}(\vp).
\end{equation}

Let $\{e_{i}\}_{i=1}^{n}$ be a local frame for $T_{\mathbb{C}}^{(1,0)}M$. Throughout this paper, we use covariant derivatives with respect to the Levi-Civita connection $\nabla$. And the subscripts of a function $f$ always denote the covariant derivatives of $f$ with respect to $\nabla$, e.g.,
\begin{equation*}
f_{i} = \nabla_{e_{i}}f, \,\, f_{i\ov{j}} = \nabla_{\ov{e}_{j}}\nabla_{e_{i}}f, \,\,
f_{i\ov{j}k} = \nabla_{e_{k}}\nabla_{\ov{e}_{j}}\nabla_{e_{i}}f.
\end{equation*}
Recalling the commutation formula for covariant derivatives (Ricci identity), for any two vector fields $V_{1},V_{2}$, we have
\begin{equation}\label{Commutation formula}
\nabla_{V_{1}}\nabla_{V_{2}}(\nabla^{k}f)-\nabla_{V_{2}}\nabla_{V_{1}}(\nabla^{k}f) = (\nabla^{k}f)*\textrm{Rm},
\end{equation}
where $\textrm{Rm}$ is the curvature tensor of $g$ and $*$ denotes a contraction.

Next we define a tensor field $S$ by
\begin{equation}\label{Definition of S}
\nabla_{e_{i}}\ov{e}_{j}-[e_{i},\ov{e}_{j}]^{(0,1)} = S_{i\ov{j}}^{p}e_{p}+S_{i\ov{j}}^{\ov{p}}\ov{e}_{p}.
\end{equation}
By direct calculation, it is clear that
\begin{equation}\label{Property of S}
\ov{S_{i\ov{j}}^{p}} = S_{j\ov{i}}^{\ov{p}} ~\text{~and~}~
\ov{S_{i\ov{j}}^{\ov{p}}} = S_{j\ov{i}}^{p}.
\end{equation}
For convenience, we write
\begin{equation*}
\ti{\omega} = \omega+\ddbar\vp > 0
\end{equation*}
and let $\ti{g}$ be the corresponding Riemannian metric. Combining (\ref{ddbar formula}) and (\ref{Definition of S}), we see that
\begin{equation}\label{Expression of tilde g}
\begin{split}
\ti{g}_{i\ov{j}} = {} & g_{i\ov{j}}+(\de\dbar\vp)(e_{i},\ov{e}_{j}) \\
                 = {} & g_{i\ov{j}}+\vp_{i\ov{j}}+S_{i\ov{j}}^{p}\vp_{p}+S_{i\ov{j}}^{\ov{p}}\vp_{\ov{p}},
\end{split}
\end{equation}
where $\vp_{i\ov{j}}=(\nabla^{2}\vp)(e_{i},\ov{e}_{j})$.

For later use, let us recall the $L^{1}$ estimate and zero order estimate.
\begin{proposition}[Proposition 2.3 of \cite{CTW16}]\label{L1 estimate}
For any $\vp\in C^{\infty}(M)$ satisfying $\omega+\ddbar\vp>0$ and $\sup_{M}\vp=0$. Then there exists a constant $C$ depending only on $(M,\omega,J)$ such that
\begin{equation*}
\int_{M}|\vp| \, \omega^{n} = \int_{M}(-\vp)\omega^{n} \leq C.
\end{equation*}
\end{proposition}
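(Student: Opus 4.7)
The plan is a Green's function argument for the Laplace--Beltrami operator $\Delta_{g}$ of $g$. Let $G(x,y)$ be a Green's function normalized by $\int_{M}G(x,\cdot)\,dV_{g}=0$; standard elliptic theory on compact Riemannian manifolds provides a constant $A\geq 0$, depending only on $(M,g)$, with $G\geq -A$ on $M\times M$, together with the representation
\begin{equation*}
\vp(x) - \frac{1}{\vol(M)}\int_{M}\vp\,dV_{g}
= -\int_{M}G(x,y)\,\Delta_{g}\vp(y)\,dV_{g}(y).
\end{equation*}

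The first step is to convert $\omega+\ddbar\vp>0$ into a one-sided pointwise bound for $\Delta_{g}\vp$. Tracing (\ref{Expression of tilde g}) against $g^{i\ov{j}}$ and using $\ti{g}_{i\ov{j}}>0$ yields
\begin{equation*}
g^{i\ov{j}}\bigl(\vp_{i\ov{j}}+S_{i\ov{j}}^{p}\vp_{p}+S_{i\ov{j}}^{\ov{p}}\vp_{\ov{p}}\bigr)\geq -n.
\end{equation*}
Since the Levi-Civita Hessian of a function is symmetric, $\Delta_{g}\vp=2g^{i\ov{j}}\vp_{i\ov{j}}$, and the previous display then gives the key inequality
\begin{equation*}
\Delta_{g}\vp \geq -C_{0}-C_{1}|\de\vp|_{g},
\end{equation*}
with constants $C_{0},C_{1}$ depending only on $(M,\omega,J)$ through the tensor $S$ of (\ref{Definition of S}). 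In the K\"{a}hler case $S\equiv 0$ and the first-order term is absent; plugging into Green's formula at any $x_{0}\in M$ with $\vp(x_{0})=\sup_{M}\vp=0$, and using $G+A\geq 0$ together with $\int_{M}\Delta_{g}\vp\,dV_{g}=0$, then yields $-\int_{M}\vp\,dV_{g}\leq 2nA\,\vol(M)^{2}$ at once.

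In the general almost Hermitian case the gradient term is the main obstacle. I would set $u:=-\vp\geq 0$, so that $u(x_{0})=\inf_{M}u=0$ at a point $x_{0}$ attaining the supremum of $\vp$, and $u$ satisfies the quasilinear differential inequality
\begin{equation*}
\Delta_{g}u \leq C_{0}+C_{1}|\de u|_{g}.
\end{equation*}
Applying the weak Harnack inequality for nonnegative supersolutions of quasilinear elliptic operators of this type (in the Serrin--Trudinger tradition) on a small geodesic ball around $x_{0}$ gives
\begin{equation*}
\|u\|_{L^{p}(B_{r}(x_{0}))} \leq C\bigl(\inf_{B_{r}(x_{0})}u + 1\bigr) = C,
\end{equation*}
for some exponent $p\geq 1$ and constant $C$ depending only on $(M,\omega,J)$. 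A standard chaining argument using compactness of $M$ propagates this bound to $\|u\|_{L^{p}(M)}\leq C'$, and H\"{o}lder's inequality converts it to $\|u\|_{L^{1}(M)}\leq C''$. The statement follows because $\vp\leq 0$ and because $\omega^{n}$ and $dV_{g}$ are uniformly comparable on $M$.
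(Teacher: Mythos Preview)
The paper does not prove this proposition; it is quoted verbatim as Proposition~2.3 of \cite{CTW16}, so there is no in-paper argument to compare against. Your proof is correct.

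Two remarks on presentation. First, in the almost Hermitian step you do not actually need a quasilinear weak Harnack inequality: freezing $b(x):=C_{1}\nabla u(x)/|\nabla u(x)|_{g}$ (extended by zero where $\nabla u=0$) turns your inequality into $\Delta_{g}u-b\cdot\nabla u\leq C_{0}$ with $|b|_{g}\leq C_{1}$ a bounded measurable vector field, so $u=-\vp\geq 0$ is a nonnegative supersolution of a \emph{linear} uniformly elliptic operator with bounded drift. The standard weak Harnack inequality for such operators (e.g.\ Gilbarg--Trudinger, Theorem~8.18, transplanted to coordinate charts) applies with constants depending only on $(M,\omega,J)$, and your chaining argument then goes through exactly as written.

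Second, an equivalent but slightly cleaner packaging---and the one used in \cite{CTW16}---is to avoid splitting off the gradient term altogether and work directly with the canonical Laplacian $L\vp:=2\,\tr{\omega}{(\ddbar\vp)}$. The positivity hypothesis gives $L\vp>-2n$ with no remainder, and since $L$ is itself a linear second-order uniformly elliptic operator on $M$ (its first-order part is precisely the $S$-contribution in (\ref{Expression of tilde g})), one applies the weak Harnack inequality directly to $-\vp$ as a nonnegative supersolution of $L$, followed by the same covering argument. The two routes are of course equivalent; they differ only in whether the first-order term is viewed as part of the operator or as a separate drift.
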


\begin{proposition}[Proposition 3.1 of \cite{CTW16}]\label{Zero order estimate}
Let $\vp$ be a solution of (\ref{CMAE}). Then there exists a constant $C$ depending only on $(M,\omega,J)$, $\sup_{M}f$ such that
\begin{equation*}
\sup_{M}|\vp| \leq C.
\end{equation*}
\end{proposition}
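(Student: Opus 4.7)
The plan is to establish $\sup_M|\vp|\le C$ by Moser iteration, feeding off the $L^1$ bound provided by Proposition \ref{L1 estimate}.

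\emph{Normalization.} Since $\sup_M\vp=0$, set $u:=-\vp\ge 0$, so the goal is $\|u\|_{L^\infty(M)}\le C$, and Proposition \ref{L1 estimate} already supplies $\|u\|_{L^1(M)}\le C$. I expand the basic identity coming from the equation:
\begin{equation*}
\ti\omega^{n}-\omega^{n} \;=\; \ddbar\vp \wedge T, \qquad T:=\sum_{k=0}^{n-1}\ti\omega^{k}\wedge\omega^{n-1-k}\;>\;0.
\end{equation*}
Multiplying by $u^{p}$ and using $\ti\omega^{n}=f\omega^{n}$, $\ti\omega^{n}\ge 0$,
\begin{equation*}
\int_{M} u^{p}\,\omega^{n} \;\le\; \int_{M} u^{p} f\,\omega^{n} \;+\; \int_{M} u^{p}\,\ddbar u\wedge T .
\end{equation*}

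\emph{Integration by parts.} On a closed manifold Stokes' theorem applied to $d\bigl(u^{p+1}\, d^{c}u\wedge T\bigr)$ gives, after expanding,
\begin{equation*}
\int_{M} u^{p}\,\ddbar u\wedge T \;=\; -c_{p}\!\int_{M}\!\bigl|\nabla u^{(p+1)/2}\bigr|_{g}^{2}\,\omega^{n}\;+\;R(u,p),
\end{equation*}
where $c_{p}\sim p^{-1}$ comes from $d(u^{p+1})=(p+1)u^{p}du$ combined with $du\wedge d^{c}u=\tfrac{1}{2}|\nabla u|^{2}\omega^{n}/\binom{n}{1}+\cdots$, and the remainder $R(u,p)$ gathers all contributions of $dT$. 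In the K\"ahler case $T$ is closed and $R\equiv 0$; in the almost Hermitian case $R$ involves $d\omega$, $d\ti\omega$ and $\nabla J$, and can be schematically written as
\begin{equation*}
R(u,p) \;=\; \int_{M} u^{p+1}\,\tau\wedge T \;+\; \int_{M} u^{p}\, du\wedge\sigma\wedge T,
\end{equation*}
for forms $\tau,\sigma$ controlled by $|d\omega|_{g},|\nabla J|_{g}$, hence by $(M,\omega,J)$. Each term is bounded by Cauchy--Schwarz/Young in the form
\begin{equation*}
|R(u,p)| \;\le\; \tfrac{c_{p}}{2}\!\int_{M}\!\bigl|\nabla u^{(p+1)/2}\bigr|_{g}^{2}\omega^{n}+C(p+1)\!\int_{M}\!u^{p+1}\omega^{n}.
\end{equation*}

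\emph{From energy to iteration.} Combining with $f\le \sup_{M}f$ and absorbing, I obtain the reverse Sobolev-type inequality
\begin{equation*}
\int_{M}\bigl|\nabla u^{(p+1)/2}\bigr|_{g}^{2}\,\omega^{n} \;\le\; C(p+1)\!\int_{M}\! u^{p+1}\omega^{n}.
\end{equation*}
Feeding this into the Sobolev inequality on the closed Riemannian manifold $(M,g)$, with Sobolev exponent $\chi:=\tfrac{2n}{2n-1}>1$, yields the iterative bound
\begin{equation*}
\|u\|_{L^{(p+1)\chi}} \;\le\; \bigl(C(p+1)\bigr)^{1/(p+1)}\,\|u\|_{L^{p+1}}.
\end{equation*}
Starting at $p+1=1$ with $\|u\|_{L^{1}}\le C$ from Proposition \ref{L1 estimate} and iterating with $p_{k}+1=\chi^{k}$, the geometric series $\sum_{k}\chi^{-k}\log(C\chi^{k})$ converges, giving $\|u\|_{L^{\infty}}\le C$ with $C=C(M,\omega,J,\sup_{M}f)$.

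\emph{Main obstacle.} The delicate step is the integration by parts: unlike the K\"ahler case, $\omega$ is neither closed nor of type $(1,1)$-closed with respect to integrable $J$, so $T$ is not closed and $dT$ generates genuine error terms. The key point is to check that every such error is either zeroth order in derivatives of $u$ (harmless) or \emph{strictly first} order (absorbable into the gradient energy by Young's inequality) --- no second-order errors appear because $d\omega$ involves only the geometry of $(M,\omega,J)$, not of $u$. Once this bookkeeping is verified, Moser iteration goes through verbatim, and the iteration constant depends only on the Sobolev constant of $(M,g)$, on $\sup_{M}f$, and on the norms $|d\omega|_{g},|\nabla J|_{g}$ controlled by $(M,\omega,J)$.
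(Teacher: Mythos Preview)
The paper does not prove this proposition here; it is quoted from \cite{CTW16}, whose argument is indeed a Moser iteration of the type you sketch. So the overall strategy is the right one.

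There is, however, a genuine gap in your error analysis. You assert that the remainder $R(u,p)$ coming from $dT$ takes the schematic form $\int u^{p+1}\tau\wedge T+\int u^{p}\,du\wedge\sigma\wedge T$ with $\tau,\sigma$ controlled by $|d\omega|_{g},|\nabla J|_{g}$ alone, and that ``no second-order errors appear because $d\omega$ involves only the geometry of $(M,\omega,J)$, not of $u$.'' But $T=\sum_{k}\ti\omega^{k}\wedge\omega^{n-1-k}$ itself carries $\ti\omega$, hence second derivatives of $u$; and $dT$ contains $d\ti\omega$, which in the non-integrable case is \emph{not} just $d\omega$ (one has $d(\ddbar\vp)\neq 0$, equal to a first-order expression in $\vp$ times the Nijenhuis tensor, whose exterior derivative is again second order). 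After Cauchy--Schwarz the errors do not reduce to $\int u^{p+1}\omega^{n}$; what actually remains are the mixed quantities
\[
I_{a}(p)\;:=\;\int_{M}u^{p}\,\ti\omega^{a}\wedge\omega^{n-a},\qquad 0\le a\le n-1,
\]
and there is no a priori bound $\ti\omega\le C\omega$ available at this stage to compare them to $I_{0}(p)$. Closing the estimate requires a separate induction on $a$: writing $\ti\omega^{a}\wedge\omega^{n-a}-\ti\omega^{a-1}\wedge\omega^{n-a+1}=-\ddbar u\wedge\ti\omega^{a-1}\wedge\omega^{n-a}$, integrating by parts once more, and feeding the resulting system of inequalities back into itself. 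This is precisely the mechanism of Cherrier \cite{Cherrier87} and Tosatti--Weinkove \cite{TW10b} in the Hermitian case, and it is what \cite{CTW16} carries over (with the extra Nijenhuis terms) to the almost Hermitian setting. Without it your key inequality $\int|\nabla u^{(p+1)/2}|^{2}\,\omega^{n}\le C(p+1)\int u^{p+1}\omega^{n}$ is unjustified. (A minor side remark: the correct Sobolev exponent in real dimension $2n$ is $\chi=n/(n-1)$, not $2n/(2n-1)$, though this does not affect the convergence of the iteration.)
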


Throughout this paper, we say a constant is uniform if it depends only on $(M,\omega,J)$, $\sup_{M}f$, $\sup_{M}|\de(f^{\frac{1}{n}})|_{g}$ and lower bound of $\nabla^{2}(f^{\frac{1}{n}})$. We also use Einstein notation convention. Sometimes, we will include the summation for clarity.

\section{First order estimate}
In this section, we prove the first order estimate. The proof is similar to \cite[Theorem 1]{Blocki09} in the K\"{a}hler case.

\begin{proposition}\label{First order estimate}
Let $\vp$ be a smooth solution of (\ref{CMAE}). Then there exists a constant $C$ depending only on $(M,\omega,J)$, $\sup_{M}f$, $\sup_{M}|\de(f^{\frac{1}{n}})|_{g}$ such that
\begin{equation*}
\sup_{M}|\de\vp|_{g} \leq C.
\end{equation*}
\end{proposition}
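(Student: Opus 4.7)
The strategy is to adapt the maximum-principle argument of \cite{Blocki09} in the K\"ahler case. Consider the test function
\begin{equation*}
Q = \log|\de\vp|^2_g + h(\vp), \qquad h(t) = -At + Bt^2,
\end{equation*}
with constants $A, B > 0$ to be chosen so as to absorb the extra first-order contributions coming from the tensor $S$ in (\ref{Definition of S}). By Proposition \ref{Zero order estimate}, $h(\vp)$ is bounded once $A,B$ are fixed, so it suffices to bound $|\de\vp|^2_g(x_0)$ at an interior maximum $x_0$ of $Q$, which we may assume satisfies $|\de\vp|^2_g(x_0)\geq 1$. Choose a local frame $\{e_i\}$ at $x_0$ that is unitary for $g$ and diagonalises $\ti g$, so $g_{i\ov j}=\delta_{ij}$, $\ti g_{i\ov j}=\lambda_i\delta_{ij}$, and write $L=\ti g^{i\ov j}\nabla_i\nabla_{\ov j}$; the maximum principle gives $\nabla_k Q = 0$ and $LQ \leq 0$ at $x_0$.

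Computing $L(|\de\vp|^2_g)$ via the Ricci identity (\ref{Commutation formula}) for $k=1$ (which, crucially, produces only curvature and no third-order torsion) and expanding $|\de\vp|^2_g = g^{p\ov q}\vp_p\vp_{\ov q}$ yields the favourable quadratic terms $\sum_{i,k}\ti g^{i\ov i}\bigl(|\vp_{ki}|^2 + |\vp_{k\ov i}|^2\bigr)$, mixed third-order terms schematically of the form $2\operatorname{Re}\sum_{i,k}\ti g^{i\ov i}\vp_{ki\ov i}\vp_{\ov k}$, and a remainder bounded by $C|\de\vp|^2_g\,\mathrm{tr}_{\ti g}g$ that collects both curvature and the new $S$- and $\nabla S$-contributions from (\ref{Expression of tilde g}). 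The third-order terms are handled by differentiating the equation in its $n$th-root form $(\det\ti g)^{1/n} = f^{1/n}(\det g)^{1/n}$, which yields
\begin{equation*}
f^{1/n}\,\ti g^{i\ov j}\nabla_k\ti g_{i\ov j} = n\,\nabla_k(f^{1/n}) + f^{1/n}\,g^{a\ov b}\nabla_k g_{a\ov b};
\end{equation*}
using (\ref{Expression of tilde g}) this expresses $\sum_i\ti g^{i\ov i}\vp_{i\ov i k}$ in terms of $\nabla_k(f^{1/n})/f^{1/n}$ plus quantities that are either lower order or already absorbable.

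The apparent $1/f$ singularity is eliminated by the AM-GM inequality applied directly to the equation:
\begin{equation*}
\mathrm{tr}_{\ti g}g = \sum_i \lambda_i^{-1} \geq n\Bigl(\prod_i \lambda_i^{-1}\Bigr)^{1/n} = n\,(\det g)^{1/n}/(\det\ti g)^{1/n} = n/f^{1/n},
\end{equation*}
so that $|\nabla(f^{1/n})|_g/f^{1/n} \leq C\,\mathrm{tr}_{\ti g}g$ uniformly. Commuting the remaining third derivatives by (\ref{Commutation formula}) and applying Cauchy-Schwarz against the good quadratic terms then bounds the full third-order contribution to $L(|\de\vp|^2_g)$ by a small multiple of those good terms plus $C\bigl(|\de\vp|^2_g + |\de\vp|_g\bigr)\mathrm{tr}_{\ti g}g$. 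Combined with $L(h(\vp)) = h'(\vp)\bigl(n - \mathrm{tr}_{\ti g}g + O(|\de\vp|_g)\mathrm{tr}_{\ti g}g\bigr) + h''(\vp)\,\ti g^{i\ov j}\vp_i\vp_{\ov j}$, I first choose $B$ large enough that $2B\,\ti g^{i\ov j}\vp_i\vp_{\ov j}$ absorbs the errors linear in $|\de\vp|_g$ by a weighted Cauchy-Schwarz, and then choose $A$ large enough that $-h'(\vp)\,\mathrm{tr}_{\ti g}g = (A-2B\vp)\,\mathrm{tr}_{\ti g}g$ dominates all remaining positive multiples of $\mathrm{tr}_{\ti g}g$. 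The inequality $LQ(x_0) \leq 0$ then forces $|\de\vp|^2_g(x_0) \leq C$.

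The main obstacle is the bookkeeping of the $S$-induced terms coming from (\ref{Expression of tilde g}): they feed first-order derivatives of $\vp$ into expressions that are purely second order in the K\"ahler case, so the linear correction used in \cite{Blocki09} no longer suffices, and the quadratic $Bt^2$ part of $h$ must be exploited to absorb these lower-order errors. Everything else is a careful but essentially routine adaptation of the K\"ahler argument.
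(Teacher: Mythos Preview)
Your proposal has a genuine gap in the handling of the $S$-terms. When you pass from $\ti g^{i\ov i}\vp_{i\ov i\ov k}$ to $\ti g^{i\ov i}(\ti g_{i\ov i})_{\ov k}$ via (\ref{Expression of tilde g}), differentiating $\vp_{i\ov i}=\ti g_{i\ov i}-g_{i\ov i}-S_{i\ov i}^{p}\vp_{p}-S_{i\ov i}^{\ov p}\vp_{\ov p}$ produces not only the harmless $\nabla S$-terms but also
\begin{equation*}
-2\operatorname{Re}\Bigl(\sum_{k}\ti g^{i\ov i}\bigl(S_{i\ov i}^{p}\vp_{p\ov k}+S_{i\ov i}^{\ov p}\vp_{\ov p\ov k}\bigr)\vp_{k}\Bigr),
\end{equation*}
which involves \emph{second} derivatives of $\vp$ and is not bounded by $C|\de\vp|_{g}^{2}\,\mathrm{tr}_{\ti g}g$ as you assert. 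Nor can it be absorbed into the good quadratic term $\sum_{k}\ti g^{i\ov i}(|\vp_{ik}|^{2}+|\vp_{i\ov k}|^{2})$ by Cauchy--Schwarz, because the index $i$ carried by $\ti g^{i\ov i}$ does not appear in $\vp_{p\ov k}$: the weights do not match, and no choice of $B$ in $h''=2B$ rescues this.

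The paper's device is a cancellation rather than an absorption: the expression above, together with the $S$-term $h'(\vp)\,\ti g^{i\ov i}(S_{i\ov i}^{p}\vp_{p}+S_{i\ov i}^{\ov p}\vp_{\ov p})$ coming from $L(h(\vp))$, combines (using (\ref{Property of S}) and the symmetry $\vp_{ab}=\vp_{ba}$ of the Levi-Civita Hessian) into exactly $-2\operatorname{Re}\bigl(\ti g^{i\ov i}S_{i\ov i}^{p}Q_{p}\bigr)$, which vanishes at the maximum. This identity holds for any $h$, so it would apply equally to your test function---but it is the step you are missing, and it is what makes the quadratic correction $Bt^{2}$ unnecessary. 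Separately, the paper's choice $h(\vp)=e^{-A\vp}$ is not cosmetic: after the cancellation one must still balance $h''$ against $\ve(h')^{2}$ and $-h'$ against $C/\ve$ for some $\ve>0$, which reduces to $h''\ge C_{0}|h'|$; with your polynomial $h$ this becomes $2B\ge C_{0}(A+2B|\vp(x_{0})|)$, which is impossible once $C_{0}|\vp(x_{0})|\ge 1$, whereas the exponential gives $h''/|h'|=A$, closable for any $C_{0}$ by taking $A$ large.
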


\begin{proof}
We consider the following quantity
\begin{equation*}
Q = \log|\de\vp|_{g}^{2} + e^{-A\vp},
\end{equation*}
where $A$ is a constant to be determined later. Let $x_{0}$ be the maximum point of $Q$ and $\{e_{i}\}_{i=1}^{n}$ be a local $g$-unitary frame for $T_{\mathbb{C}}^{(1,0)}M$ in a neighbourhood of $x_{0}$ such that
\begin{equation}\label{First order estimate equation 11}
\ti{g}_{i\ov{j}}(x_{0}) = \delta_{ij}\ti{g}_{i\ov{i}}(x_{0}).
\end{equation}

To prove Proposition \ref{First order estimate}, it suffices to prove that $|\de\vp|_{g}^{2}(x_{0})\leq C$. Without loss of generality, we assume that $|\de\vp|_{g}^{2}(x_{0})>1$. By the maximum principle, at $x_{0}$, we have
\begin{equation}\label{First order estimate equation 1}
\begin{split}
0 \geq {} & \ti{g}^{i\ov{i}}Q_{i\ov{i}} \\
    =  {} & \frac{\ti{g}^{i\ov{i}}(|\de\vp|_{g}^{2})_{i\ov{i}}}{|\de\vp|_{g}^{2}}
            -\frac{\ti{g}^{i\ov{i}}|(|\de\vp|_{g}^{2})_{i}|^{2}}{|\de\vp|_{g}^{4}}+\ti{g}^{i\ov{i}}(e^{-A\vp})_{i\ov{i}}.
\end{split}
\end{equation}
Now we estimate each term in (\ref{First order estimate equation 1}). For the first term of (\ref{First order estimate equation 1}), using (\ref{Commutation formula}) and (\ref{Expression of tilde g}), we compute
\begin{equation}\label{First order estimate equation 2}
\begin{split}
     {} & \ti{g}^{i\ov{i}}(|\de\vp|_{g}^{2})_{i\ov{i}} \\
   = {} & \sum_{k}\ti{g}^{i\ov{i}}(|\vp_{ik}|^{2}+|\vp_{i\ov{k}}|^{2})
          +\sum_{k}\ti{g}^{i\ov{i}}(\vp_{\ov{k}i\ov{i}}\vp_{k}+\vp_{ki\ov{i}}\vp_{\ov{k}}) \\
\geq {} & \sum_{k}\ti{g}^{i\ov{i}}(|\vp_{ik}|^{2}+|\vp_{i\ov{k}}|^{2})+2\textrm{Re}\left(\sum_{k}\ti{g}^{i\ov{i}}\vp_{i\ov{i}\ov{k}}\vp_{k}\right)
          -C|\de\vp|_{g}^{2}\sum_{i}\ti{g}^{i\ov{i}} \\
   = {} & \sum_{k}\ti{g}^{i\ov{i}}(|\vp_{ik}|^{2}+|\vp_{i\ov{k}}|^{2})
          +2\textrm{Re}\left(\sum_{k}\ti{g}^{i\ov{i}}(\ti{g}_{i\ov{i}})_{\ov{k}}\vp_{k}\right) \\
        & -2\textrm{Re}\left(\sum_{k}\ti{g}^{i\ov{i}}(S_{i\ov{i}}^{p}\vp_{p}+S_{i\ov{i}}^{\ov{p}}\vp_{\ov{p}})_{\ov{k}}\vp_{k}\right)
          -C|\de\vp|_{g}^{2}\sum_{i}\ti{g}^{i\ov{i}} \\
\geq {} & \sum_{k}\ti{g}^{i\ov{i}}(|\vp_{ik}|^{2}+|\vp_{i\ov{k}}|^{2})
          +2\textrm{Re}\left(\sum_{k}\ti{g}^{i\ov{i}}(\ti{g}_{i\ov{i}})_{\ov{k}}\vp_{k}\right) \\
        & -2\textrm{Re}\left(\sum_{k}\ti{g}^{i\ov{i}}(S_{i\ov{i}}^{p}\vp_{p\ov{k}}+S_{i\ov{i}}^{\ov{p}}\vp_{\ov{p}\ov{k}})\vp_{k}\right)
          -C|\de\vp|_{g}^{2}\sum_{i}\ti{g}^{i\ov{i}}.
\end{split}
\end{equation}
To deal with the third order term in (\ref{First order estimate equation 2}), we differentiate (covariantly) the logarithm of (\ref{CMAE})
\begin{equation*}
\log\frac{\det\ti{g}}{\det g} = n\log(f^{\frac{1}{n}}),
\end{equation*}
and we obtain
\begin{equation}\label{First order estimate equation 13}
\ti{g}^{i\ov{i}}(\ti{g}_{i\ov{i}})_{\ov{k}} = \frac{n(f^{\frac{1}{n}})_{\ov{k}}}{f^{\frac{1}{n}}}.
\end{equation}
By the arithmetic-geometric mean inequality, it is clear that
\begin{equation}\label{Arithmetic-geometric mean inequality}
\frac{1}{f^{\frac{1}{n}}} = \left(\prod_{i}\ti{g}^{i\ov{i}}\right)^{\frac{1}{n}} \leq \frac{1}{n}\sum_{i}\ti{g}^{i\ov{i}}.
\end{equation}
Combining (\ref{First order estimate equation 13}) and (\ref{Arithmetic-geometric mean inequality}), we have
\begin{equation}\label{First order estimate equation 12}
2\textrm{Re}\left(\sum_{k}\ti{g}^{i\ov{i}}(\ti{g}_{i\ov{i}})_{\ov{k}}\vp_{k}\right)
  =  2\textrm{Re}\left(\sum_{k}\frac{n(f^{\frac{1}{n}})_{\ov{k}}\vp_{k}}{f^{\frac{1}{n}}}\right)
\geq -C|\de\vp|_{g}\sum_{i}\ti{g}^{i\ov{i}}.
\end{equation}
Substituting (\ref{First order estimate equation 12}) into (\ref{First order estimate equation 2}), we see that
\begin{equation}\label{First order estimate equation 1 term 1}
\begin{split}
\ti{g}^{i\ov{i}}(|\de\vp|_{g}^{2})_{i\ov{i}}
\geq {} & \sum_{k}\ti{g}^{i\ov{i}}(|\vp_{ik}|^{2}+|\vp_{i\ov{k}}|^{2})-C|\de\vp|_{g}^{2}\sum_{i}\ti{g}^{i\ov{i}} \\
        & -2\textrm{Re}\left(\sum_{k}\ti{g}^{i\ov{i}}(S_{i\ov{i}}^{p}\vp_{p\ov{k}}+S_{i\ov{i}}^{\ov{p}}\vp_{\ov{p}\ov{k}})\vp_{k}\right),
\end{split}
\end{equation}
where we used $|\de\vp|_{g}^{2}(x_{0})>1$.

For the second term of (\ref{First order estimate equation 1}), using $Q_{i}(x_{0})=0$, it is clear that
\begin{equation}\label{First order estimate equation 1 term 2}
-\frac{\ti{g}^{i\ov{i}}|(|\de\vp|_{g}^{2})_{i}|^{2}}{|\de\vp|_{g}^{4}}
= -A^{2}e^{-2A\vp}\ti{g}^{i\ov{i}}|\vp_{i}|^{2}.
\end{equation}

For the third term of (\ref{First order estimate equation 1}), by (\ref{Expression of tilde g}), we see that
\begin{equation}\label{First order estimate equation 3}
\begin{split}
\ti{g}^{i\ov{i}}\vp_{i\ov{i}}
= {} & \ti{g}^{i\ov{i}}(\ti{g}_{i\ov{i}}-g_{i\ov{i}}-S_{i\ov{i}}^{p}\vp_{p}-S_{i\ov{i}}^{\ov{p}}\vp_{\ov{p}}) \\
= {} & n-\sum_{i}\ti{g}^{i\ov{i}}-\ti{g}^{i\ov{i}}(S_{i\ov{i}}^{p}\vp_{p}+S_{i\ov{i}}^{\ov{p}}\vp_{\ov{p}}),
\end{split}
\end{equation}
which implies
\begin{equation}\label{First order estimate equation 1 term 3}
\begin{split}
\ti{g}^{i\ov{i}}(e^{-A\vp})_{i\ov{i}}
= {} & Ae^{-A\vp}\sum_{i}\ti{g}^{i\ov{i}}+A^{2}e^{-A\vp}\ti{g}^{i\ov{i}}|\vp_{i}|^{2} \\
     & +Ae^{-A\vp}\ti{g}^{i\ov{i}}(S_{i\ov{i}}^{p}\vp_{p}+S_{i\ov{i}}^{\ov{p}}\vp_{\ov{p}})-Ane^{-A\vp}.
\end{split}
\end{equation}

Substituting (\ref{First order estimate equation 1 term 1}), (\ref{First order estimate equation 1 term 2}) and (\ref{First order estimate equation 1 term 3}) into (\ref{First order estimate equation 1}), we obtain
\begin{equation}\label{First order estimate equation 4}
\begin{split}
0 \geq {} & \frac{\sum_{k}\ti{g}^{i\ov{i}}(|\vp_{ik}|^{2}+|\vp_{i\ov{k}}|^{2})}{|\de\vp|_{g}^{2}}
            +(A^{2}e^{-A\vp}-A^{2}e^{-2A\vp})\ti{g}^{i\ov{i}}|\vp_{i}|^{2} \\
          & -2\textrm{Re}\left(\frac{\sum_{k}\ti{g}^{i\ov{i}}(S_{i\ov{i}}^{p}\vp_{p\ov{k}}+S_{i\ov{i}}^{\ov{p}}\vp_{\ov{p}\ov{k}})\vp_{k}}
            {|\de\vp|_{g}^{2}}\right)+Ae^{-A\vp}\ti{g}^{i\ov{i}}(S_{i\ov{i}}^{p}\vp_{p}+S_{i\ov{i}}^{\ov{p}}\vp_{\ov{p}}) \\[1mm]
          & +(Ae^{-A\vp}-C)\sum_{i}\ti{g}^{i\ov{i}}-Ane^{-A\vp}.
\end{split}
\end{equation}
Using $Q_{p}(x_{0})=0$ and (\ref{Property of S}), it is clear that
\begin{equation}\label{First order estimate equation 10}
\begin{split}
0 = {} & -2\textrm{Re}\left(\ti{g}^{i\ov{i}}S_{i\ov{i}}^{p}Q_{p}\right) \\[2mm]
  = {} & -2\textrm{Re}\left(\frac{\sum_{k}\ti{g}^{i\ov{i}}S_{i\ov{i}}^{p}(\vp_{kp}\vp_{\ov{k}}+\vp_{\ov{k}p}\vp_{k})}
         {|\de\vp|_{g}^{2}}-Ae^{-A\vp}\ti{g}^{i\ov{i}}S_{i\ov{i}}^{p}\vp_{p}\right) \\
  = {} & -2\textrm{Re}\left(\frac{\sum_{k}\ti{g}^{i\ov{i}}(S_{i\ov{i}}^{p}\vp_{\ov{k}p}+S_{i\ov{i}}^{\ov{p}}\vp_{\ov{k}\ov{p}})\vp_{k}}
            {|\de\vp|_{g}^{2}}\right)+Ae^{-A\vp}\ti{g}^{i\ov{i}}(S_{i\ov{i}}^{p}\vp_{p}+S_{i\ov{i}}^{\ov{p}}\vp_{\ov{p}}) \\
  = {} & -2\textrm{Re}\left(\frac{\sum_{k}\ti{g}^{i\ov{i}}(S_{i\ov{i}}^{p}\vp_{p\ov{k}}+S_{i\ov{i}}^{\ov{p}}\vp_{\ov{p}\ov{k}})\vp_{k}}
            {|\de\vp|_{g}^{2}}\right)+Ae^{-A\vp}\ti{g}^{i\ov{i}}(S_{i\ov{i}}^{p}\vp_{p}+S_{i\ov{i}}^{\ov{p}}\vp_{\ov{p}}),
\end{split}
\end{equation}
where we used $\vp_{kp}=\vp_{pk}$ and $\vp_{\ov{k}p}=\vp_{p\ov{k}}$ (Levi-Civita connection) in the last equality. Substituting (\ref{First order estimate equation 10}) into (\ref{First order estimate equation 4}), we obtain
\begin{equation}\label{First order estimate equation 5}
\begin{split}
0 \geq {} & \frac{\sum_{k}\ti{g}^{i\ov{i}}(|\vp_{ik}|^{2}+|\vp_{i\ov{k}}|^{2})}{|\de\vp|_{g}^{2}}
            +(A^{2}e^{-A\vp}-A^{2}e^{-2A\vp})\ti{g}^{i\ov{i}}|\vp_{i}|^{2} \\
          & +(Ae^{-A\vp}-C)\sum_{i}\ti{g}^{i\ov{i}}-Ane^{-A\vp}.
\end{split}
\end{equation}
By the Cauchy-Schwarz inequality, we get
\begin{equation}\label{First order estimate equation 6}
\frac{\sum_{k}\ti{g}^{i\ov{i}}|\vp_{ik}|^{2}}{|\de\vp|_{g}^{2}}
\geq \frac{\sum_{i}\ti{g}^{i\ov{i}}|\sum_{k}\vp_{ki}\vp_{\ov{k}}|^{2}}{|\de\vp|_{g}^{4}}.
\end{equation}
Using $Q_{i}(x_{0})=0$, it follows that
\begin{equation*}
\sum_{k}(\vp_{ki}\vp_{\ov{k}}+\vp_{\ov{k}i}\vp_{k}) = Ae^{-A\vp}|\de\vp|_{g}^{2}\vp_{i}.
\end{equation*}
Combining this with (\ref{Expression of tilde g}) and (\ref{First order estimate equation 11}), for any $\ve\in(0,1)$, we have
\begin{equation*}
\begin{split}
\left|\sum_{k}\vp_{ki}\vp_{\ov{k}}\right|^{2}
 =   {} & \left|Ae^{-A\vp}|\de\vp|_{g}^{2}\vp_{i}
          -\sum_{k}(\ti{g}_{i\ov{k}}-g_{i\ov{k}}-S_{i\ov{k}}^{p}\vp_{p}-S_{i\ov{k}}^{\ov{p}}\vp_{\ov{p}})\vp_{k}\right|^{2} \\
 =   {} & \left|(Ae^{-A\vp}|\de\vp|_{g}^{2}-\ti{g}_{i\ov{i}}+1)\vp_{i}
          +\sum_{k}(S_{i\ov{k}}^{p}\vp_{p}+S_{i\ov{k}}^{\ov{p}}\vp_{\ov{p}})\vp_{k}\right|^{2} \\
\geq {} & (1-\ve)\left(Ae^{-A\vp}|\de\vp|_{g}^{2}-\ti{g}_{i\ov{i}}+1\right)^{2}|\vp_{i}|^{2}-\frac{C}{\ve}|\de\vp|_{g}^{4} \\
\geq {} & (1-\ve)\left(A^{2}e^{-2A\vp}|\de\vp|_{g}^{4}-2Ae^{-A\vp}|\de\vp|_{g}^{2}\ti{g}_{i\ov{i}}-2\ti{g}_{i\ov{i}}\right)|\vp_{i}|^{2}
          -\frac{C}{\ve}|\de\vp|_{g}^{4}.
\end{split}
\end{equation*}
Substituting this into (\ref{First order estimate equation 6}), we see that
\begin{equation}\label{First order estimate equation 7}
\begin{split}
\frac{\sum_{k}\ti{g}^{i\ov{i}}|\vp_{ik}|^{2}}{|\de\vp|_{g}^{2}}
\geq {} & (1-\ve)A^{2}e^{-2A\vp}\ti{g}^{i\ov{i}}|\vp_{i}|^{2}-2Ae^{-A\vp}-\frac{2}{|\de\vp|_{g}^{2}}-\frac{C}{\ve}\sum_{i}\ti{g}^{i\ov{i}} \\
\geq {} & (1-\ve)A^{2}e^{-2A\vp}\ti{g}^{i\ov{i}}|\vp_{i}|^{2}-\frac{C}{\ve}\sum_{i}\ti{g}^{i\ov{i}}-2Ae^{-A\vp}-2,
\end{split}
\end{equation}
where we used $|\de\vp|_{g}^{2}(x_{0})>1$ in the second inequality. Substituting (\ref{First order estimate equation 7}) into (\ref{First order estimate equation 5}), we obtain
\begin{equation*}
\begin{split}
0 \geq {} & (A^{2}e^{-A\vp}-\ve A^{2}e^{-2A\vp})\ti{g}^{i\ov{i}}|\vp_{i}|^{2}
            +\left(Ae^{-A\vp}-\frac{C_{0}}{\ve}\right)\sum_{i}\ti{g}^{i\ov{i}} \\
          & -A(n+2)e^{-A\vp}-2,
\end{split}
\end{equation*}
where $C_{0}$ is a constant depending only on $(M,\omega,J)$, $\sup_{M}f$ and $\sup_{M}|\de(f^{\frac{1}{n}})|_{g}$. Now we choose
\begin{equation*}
A = 2C_{0}+1 \text{~and~} \ve = \frac{e^{A\vp(x_{0})}}{2}.
\end{equation*}
Recalling $\sup_{M}\vp=0$, we see that
\begin{equation*}
A^{2}e^{-A\vp}-\ve A^{2}e^{-2A\vp} \geq \frac{1}{2}
\text{~and~} Ae^{-A\vp}-\frac{C_{0}}{\ve} \geq 1.
\end{equation*}
It then follows that
\begin{equation}\label{First order estimate equation 9}
\frac{1}{2}\ti{g}^{i\ov{i}}|\vp_{i}|^{2}+\sum_{i}\ti{g}^{i\ov{i}} \leq C.
\end{equation}
From $\sum_{i}\ti{g}^{i\ov{i}}\leq C$ and $\frac{\det\ti{g}}{\det g}\leq C$, we have $\ti{g}_{i\ov{i}}\leq C$ for each $i$. Combining this with (\ref{First order estimate equation 9}), we obtain $|\de\vp|_{g}^{2}(x_{0})\leq C$, as desired.
\end{proof}

For later use, we state the following lemma, which follows from Proposition \ref{First order estimate}, (\ref{First order estimate equation 1 term 1}) and (\ref{First order estimate equation 3}).

\begin{lemma}\label{Calculation 1}
There exists a uniform constant $C$ such that
\begin{equation*}
\ti{g}^{i\ov{i}}\vp_{i\ov{i}}
= n-\sum_{i}\ti{g}^{i\ov{i}}-\ti{g}^{i\ov{i}}(S_{i\ov{i}}^{p}\vp_{p}+S_{i\ov{i}}^{\ov{p}}\vp_{\ov{p}})
\end{equation*}
and
\begin{equation*}
\begin{split}
\ti{g}^{i\ov{i}}(|\de\vp|_{g}^{2})_{i\ov{i}}
\geq {} & \sum_{k}\ti{g}^{i\ov{i}}(|\vp_{ik}|^{2}+|\vp_{i\ov{k}}|^{2})-C\sum_{i}\ti{g}^{i\ov{i}} \\
        & -2\textrm{Re}\left(\sum_{k}\ti{g}^{i\ov{i}}(S_{i\ov{i}}^{p}\vp_{p\ov{k}}+S_{i\ov{i}}^{\ov{p}}\vp_{\ov{p}\ov{k}})\vp_{k}\right).
\end{split}
\end{equation*}
\end{lemma}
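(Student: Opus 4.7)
The plan is to observe that both statements of Lemma \ref{Calculation 1} are pointwise consequences of computations already performed inside the proof of Proposition \ref{First order estimate}, now that the uniform first order bound $\sup_M|\de\vp|_g\leq C$ is in hand globally.

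The first identity is literally equation (\ref{First order estimate equation 3}), obtained by contracting (\ref{Expression of tilde g}) with $\ti g^{i\ov i}$ in a $g$-unitary frame and using $\ti g^{i\ov i}g_{i\ov i}=\sum_i\ti g^{i\ov i}$; this holds pointwise on $M$ and makes no appeal to the maximum principle. For the second inequality I would revisit the Bochner-type expansion leading to (\ref{First order estimate equation 1 term 1}): commuting covariant derivatives via (\ref{Commutation formula}) produces the two Hessian squares $\sum_k\ti g^{i\ov i}(|\vp_{ik}|^2+|\vp_{i\ov k}|^2)$, the torsion remainder $-2\textrm{Re}(\sum_k \ti g^{i\ov i}(S_{i\ov i}^p\vp_{p\ov k}+S_{i\ov i}^{\ov p}\vp_{\ov p\ov k})\vp_k)$, a term $2\textrm{Re}(\ti g^{i\ov i}(\ti g_{i\ov i})_{\ov k}\vp_k)$, and a curvature-induced error of size $-C|\de\vp|_g^2\sum_i\ti g^{i\ov i}$. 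The remaining $2\textrm{Re}(\ti g^{i\ov i}(\ti g_{i\ov i})_{\ov k}\vp_k)$ term is then controlled using (\ref{First order estimate equation 13}) together with the arithmetic-geometric mean inequality (\ref{Arithmetic-geometric mean inequality}), which contributes a further error of size $-C|\de\vp|_g\sum_i\ti g^{i\ov i}$.

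In the original proof of Proposition \ref{First order estimate} these two error contributions were collapsed into a single $-C\sum_i\ti g^{i\ov i}$ only under the running assumption $|\de\vp|_g^2(x_0)>1$ at the maximum point. That assumption is now unnecessary: by Proposition \ref{First order estimate}, $|\de\vp|_g$ is bounded by a uniform constant everywhere on $M$, so $C|\de\vp|_g^2$ and $C|\de\vp|_g$ both become uniform constants and the two error terms merge into $-C\sum_i\ti g^{i\ov i}$, producing exactly the inequality claimed. The only point requiring (minor) care is to verify that neither the derivation of (\ref{First order estimate equation 3}) nor the derivation of (\ref{First order estimate equation 1 term 1}) actually invoked the maximum principle or the special choice of frame at $x_0$ — only the final absorption of lower order terms did — so the bounds are genuinely pointwise. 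Consequently the main obstacle is essentially bookkeeping, and no new estimates need to be carried out.
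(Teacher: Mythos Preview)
Your proposal is correct and matches the paper's own justification exactly: the paper states that the lemma ``follows from Proposition \ref{First order estimate}, (\ref{First order estimate equation 1 term 1}) and (\ref{First order estimate equation 3}),'' which is precisely what you have spelled out. Your observation that the absorption of the $-C|\de\vp|_g^2\sum_i\ti g^{i\ov i}$ and $-C|\de\vp|_g\sum_i\ti g^{i\ov i}$ errors into $-C\sum_i\ti g^{i\ov i}$ now relies on the global gradient bound rather than the local hypothesis $|\de\vp|_g^2(x_0)>1$ is exactly the point, and your check that the underlying computations (\ref{First order estimate equation 2})--(\ref{First order estimate equation 1 term 1}) and (\ref{First order estimate equation 3}) are pointwise and do not invoke the maximum principle is correct.
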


\section{Second order estimate}
In this section, we prove the following second order estimate.

\begin{proposition}\label{Second order estimate}
Let $\vp$ be a smooth solution of (\ref{CMAE}). Then there exists a constant $C$ depending only on $(M,\omega,J)$, $\sup_{M}f$, $\sup_{M}|\de(f^{\frac{1}{n}})|_{g}$ and lower bound of $\nabla^{2}(f^{\frac{1}{n}})$ such that
\begin{equation*}
\sup_{M}|\nabla^{2}\vp|_{g} \leq C,
\end{equation*}
where $\nabla$ is the Levi-Civita connection of $g$.
\end{proposition}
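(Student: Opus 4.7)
The plan is to apply the maximum principle to a test quantity of the form
\begin{equation*}
Q = \log \lambda_1 + \mu |\ti{\omega}|_g^2 + h(|\de\vp|_g^2) - A\vp,
\end{equation*}
where $\lambda_1$ denotes the largest eigenvalue of the real Hessian $\nabla^2\vp$, the constants $\mu, A > 0$ are to be chosen large, and $h$ is a suitable increasing convex function. Working with the real eigenvalue $\lambda_1$ rather than a complex one is dictated by the non-integrable almost Hermitian setting, as in \cite{CTW16}. The genuinely new ingredient flagged in the introduction is the term $\mu|\ti{\omega}|_g^2 = \mu\sum|\ti{g}_{i\ov{j}}|^2$: applying $\ti{g}^{k\ov{k}}\de_k\de_{\ov{k}}$ produces the ``good'' third-order quantity $\mu\sum_{i,j,k}\ti{g}^{k\ov{k}}\bigl(|(\ti{g}_{i\ov{j}})_k|^2+|(\ti{g}_{i\ov{j}})_{\ov{k}}|^2\bigr)$, which is unavailable from the concavity of $(\det\ti{g})^{1/n}$ alone.

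At a maximum point $x_0$ of $Q$, after a standard perturbation making $\lambda_1$ smooth nearby, I choose a local $g$-unitary frame diagonalizing $\ti{g}$ at $x_0$, aligned with the eigenvector $V$ of $\lambda_1$. All derivatives are taken with respect to the Levi-Civita connection, so by the Ricci identity (\ref{Commutation formula}) commuting second covariant derivatives of $\vp$ produces only curvature couplings rather than third-order terms, which is precisely why $\nabla$ is preferred over the Chern connection. I then compute $\ti{g}^{i\ov{i}}Q_{i\ov{i}}\le0$ piece by piece: (i) $\ti{g}^{i\ov{i}}(\log\lambda_1)_{i\ov{i}}$ via the standard eigenvalue derivative formula, producing $\lambda_1^{-1}\ti{g}^{i\ov{i}}|(\vp_{VV})_i|^2$ together with bad third-order cross terms; (ii) the concavity inequality obtained by differentiating $(\det\ti{g})^{1/n}=(f\det g)^{1/n}$ twice in direction $V$, which supplies one good positive third-order quantity controlled by the assumed bounds on $f^{1/n}$; (iii) the $\mu|\ti{\omega}|_g^2$ contribution described above; and (iv) the $h(|\de\vp|_g^2)$ and $-A\vp$ pieces, which I organize using Lemma \ref{Calculation 1} to extract an $A\sum_i\ti{g}^{i\ov{i}}$ term modulo controlled lower-order contributions.

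The non-integrability of $J$ enters through the tensor $S$: formula (\ref{Expression of tilde g}) means every covariant derivative landing on $\ti{g}_{i\ov{j}}$ generates an $S\ast\de\vp$ correction, so the expansions of $(\vp_{VV})_{i\ov{i}}$ and of $(|\ti{\omega}|_g^2)_{i\ov{i}}$ produce additional bad third-order terms. To absorb these I imitate the trick used in (\ref{First order estimate equation 10}): exploit the first-order critical conditions $Q_p(x_0)=0$ to rewrite the offending mixed third-order terms by substituting $\vp_{p\ov{k}}=\ti{g}_{p\ov{k}}-g_{p\ov{k}}-S_{p\ov{k}}^q\vp_q-S_{p\ov{k}}^{\ov{q}}\vp_{\ov{q}}$, whereupon Proposition \ref{First order estimate} controls the $\de\vp$ factors uniformly. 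The residual bad third-order terms are then absorbed, via Cauchy--Schwarz, into the good third-order contributions from steps (ii) and (iii), provided $\mu$ (and separately $A$) is chosen large enough relative to the Cauchy--Schwarz losses.

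The main obstacle is precisely this third-order accounting: the weaker concavity of $(\det\ti{g})^{1/n}$ supplies only a single good term from (ii), so the $\mu|\ti{\omega}|_g^2$ contribution from (iii) must do the bulk of the absorbing, yet the very act of introducing it creates new $S$-type bad cross terms that have to fit inside the same budget. Once the accounting closes, what remains at $x_0$ is a lower bound on a positive multiple of $\lambda_1\sum_i\ti{g}^{i\ov{i}}$ by uniform constants, and the arithmetic--geometric mean inequality (\ref{Arithmetic-geometric mean inequality}) together with the upper bound on $f$ gives $\sum_i\ti{g}^{i\ov{i}}\geq n(\sup_M f)^{-1/n}>0$, which forces $\lambda_1(x_0)\leq C$. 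Comparing $Q(x_0)$ with $Q$ at any other point of $M$ then yields $\sup_M|\nabla^2\vp|_g\leq C$ as desired.
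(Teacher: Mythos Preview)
Your overall strategy is right in spirit and matches the paper's, but the auxiliary function you propose cannot close the argument, and the third-order accounting you describe as ``absorb via Cauchy--Schwarz by choosing $\mu$ large'' hides exactly the difficulty that makes this proposition nontrivial.

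The first concrete gap is the constant coefficient $\mu$ in front of $|\ti{\omega}|_g^2$. When you compute $\ti{g}^{i\ov{i}}(|\ti{\omega}|_g^2)_{i\ov{i}}$, the lower-order errors (coming from commuting derivatives and from the $S$-correction in (\ref{Expression of tilde g})) produce a term of size $-C\lambda_1^2\sum_i\ti{g}^{i\ov{i}}$, because one factor of $\ti{g}_{k\ov{k}}\le C\lambda_1$ multiplies a $C\lambda_1$ commutator error, summed over $k$. With a fixed $\mu$ this becomes $-C\mu\lambda_1^2\sum_i\ti{g}^{i\ov{i}}$, and none of your other positive pieces is of this order uniformly in $i$: the $-A\vp$ term gives only $A\sum_i\ti{g}^{i\ov{i}}$, and $h'\sum_k\ti{g}^{i\ov{i}}(|\vp_{ik}|^2+|\vp_{i\ov{k}}|^2)$ need not dominate $\lambda_1^2\ti{g}^{i\ov{i}}$ for those $i$ far from the $V_1$ direction. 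The paper resolves this by replacing $\mu|\ti{\omega}|_g^2$ with $h_1(|\ti{\omega}|_g^2)=-\tfrac13\log(10M_R^2-|\ti{\omega}|_g^2)$, so that $h_1'\sim M_R^{-2}$ and the error collapses to $O(1)\sum_i\ti{g}^{i\ov{i}}$. The price is that the good contribution $2h_1'\sum|(\ti{g}_{k\ov{l}})_i|^2$ is now tiny, so it cannot by itself absorb the bad term $K=\lambda_1^{-2}\ti{g}^{i\ov{i}}|\vp_{V_1V_1i}|^2$.

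This leads to the second, deeper gap: the third-order bookkeeping is not a matter of choosing constants large. The paper splits the indices into $I=\{i:\ti{g}_{i\ov{i}}\ge A^3e^{-2A\vp}\ti{g}_{n\ov{n}}\}$ and its complement, proves a \emph{partial} second-order estimate on $I^c$ (Lemma~\ref{Partial second order estimate}), and then decomposes $K=K_1+K_2+K_3$. The delicate piece $K_3$ over $i\notin I$ is handled by writing $V_1=\frac{1}{\sqrt{2}}(\ti{e}_1+\ov{\ti{e}}_1)$ so that $\vp_{V_1V_1i}$ splits into a $(\ti{g}_{i\ov{q}})_{V_1}$ part and a $\vp_{V_\alpha V_1 i}$ part, each absorbed by a \emph{different} good term: the former by $\lambda_1^{-1}\sum_{p\ne q}\ti{g}^{p\ov{p}}\ti{g}^{q\ov{q}}|(\ti{g}_{p\ov{q}})_{V_1}|^2$ from the concavity of $(\det)^{1/n}$ together with the small $h_1'$ term, the latter by the eigenvalue-concavity term $2\sum_{\alpha>1}\ti{g}^{i\ov{i}}|\vp_{V_\alpha V_1 i}|^2/\bigl(\lambda_1(\lambda_1-\lambda_\alpha)\bigr)$, and this requires a genuine two-case analysis (Lemma~\ref{Bad term 3}) depending on the size of $\ti{g}(\ti{e}_1,\ov{\ti{e}}_1)$. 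The paper also uses $e^{-A\vp}$ rather than $-A\vp$, because the extra $A^2e^{-A\vp}\ti{g}^{i\ov{i}}|\vp_i|^2$ term is needed to offset the $6\ve A^2e^{-2A\vp}\ti{g}^{i\ov{i}}|\vp_i|^2$ lost when estimating $K_1+K_2$. None of this structure appears in your sketch, and the argument does not close without it.
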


\subsection{Auxiliary function}
Let $\lambda_{1}(\nabla^{2}\vp)\geq\lambda_{2}(\nabla^{2}\vp)\geq\cdots\geq\lambda_{2n}(\nabla^{2}\vp)$ be the eigenvalues of $\nabla^{2}\vp$. Combining $\omega+\ddbar\vp>0$, Proposition \ref{First order estimate} and \cite[(2.4),(2.5)]{CTW16}, we see that
\begin{equation*}
\sum_{\alpha=1}^{2n}\lambda_{\alpha}(\nabla^{2}\vp) = \Delta\vp \geq -C,
\end{equation*}
where $\Delta$ is the Laplace-Beltrami operator of $g$. It then follows that
\begin{equation}\label{Second order estimate equation 3}
|\nabla^{2}\vp|_{g} \leq C\max(\lambda_{1}(\nabla^{2}\vp),0) + C.
\end{equation}
To prove Proposition \ref{Second order estimate}, it suffice to prove $\lambda_{1}(\nabla^{2}\vp)$ is uniformly bounded from above. Without loss of generality, we assume that $D=\{x\in M~|~\lambda_{1}(\nabla^{2}\vp)(x)>0\}$ is not empty. On this set, we define the following quantity
\begin{equation*}
Q = \log\lambda_{1}(\nabla^{2}\vp)+h_{1}(|\ti{\omega}|_{g}^{2})+h_{2}(|\de\vp|_{g}^{2})+e^{-A\vp},
\end{equation*}
where
\begin{equation*}
h_{1}(s) = -\frac{1}{3}\log(10M_{R}^{2}-s), \,\,
h_{2}(s) = -\frac{1}{3}\log(1+\sup_{M}|\de\vp|_{g}^{2}-s),
\end{equation*}
$M_{R}=\sup_{M}|\nabla^{2}\vp|_{g}+1$ and $A>1$ is a constant to be determined later. We need to verify the function $h_{1}(|\ti{\omega}|_{g}^{2})$ is well defined. Without loss of generality, we assume that
\begin{equation*}
M_{R} \gg 1.
\end{equation*}
It then follows that
\begin{equation*}
|\ti{\omega}|_{g}^{2} \leq 2n+2|\de\dbar\vp|_{g}^{2} \leq 2n+4|\nabla^{2}\vp|_{g}^{2}+C|\de\vp|_{g}^{2} \leq 5M_{R}^{2},
\end{equation*}
which implies that $h_{1}(|\ti{\omega}|_{g}^{2})$ is well defined. By direct calculation, we have
\begin{equation}\label{Property 1 of h}
h_{1}'' = 3(h_{1}')^{2}, \,\,
h_{2}'' = 3(h_{2}')^{2},
\end{equation}
and
\begin{equation}\label{Property 2 of h}
\frac{1}{30M_{R}^{2}} \leq h_{1}' \leq \frac{1}{15M_{R}^{2}}, \,\,
\frac{1}{C} \leq h_{2}' \leq C.
\end{equation}

Clearly, the function $Q$ is continuous on its domain $D$ and equal to $-\infty$ on $\de D$. Let $x_{0}$ be the maximum point of $Q$. Then we have $\lambda_{1}(\nabla^{2}\vp)(x_{0})>0$. Let $\{e_{i}\}_{i=1}^{n}$ be a local $g$-unitary frame for $T_{\mathbb{C}}^{(1,0)}M$ in a neighbourhood of $x_{0}$ such that
\begin{equation}\label{tilde g diagonal}
\ti{g}_{i\ov{j}}(x_{0}) = \delta_{ij}\ti{g}_{i\ov{i}}(x_{0})
\text{~and~}
\ti{g}_{1\ov{1}}(x_{0}) \geq \ti{g}_{2\ov{2}}(x_{0}) \geq \cdots \geq \ti{g}_{n\ov{n}}(x_{0}).
\end{equation}
Since $(M,\omega,J)$ is almost Hermitian, there exists a coordinate system $(U;\{x^{\alpha}\}_{\alpha=1}^{2n})$ centered at $x_{0}$ such that it holds at $x_{0}$,
\begin{equation}\label{Property of coordinate system}
g_{\alpha\beta} = \delta_{\alpha\beta},~ \frac{\de g_{\alpha\beta}}{\de x^{\gamma}}=0 ~\text{~for $\alpha,\beta,\gamma=1,2,\cdots,2n$}
\end{equation}
and
\begin{equation}\label{Definition of e i}
J\de_{2i-1} = \de_{2i},~
e_{i} = \frac{1}{\sqrt{2}}(\de_{2i-1}-\sqrt{-1}\de_{2i})
~\text{~for $i=1,2,\cdots,n$}.
\end{equation}
We want to apply the maximum principle to the quantity $Q$ at $x_{0}$. However, $Q$ may be not smooth at $x_{0}$ when the eigenspace of $\lambda_{1}(\nabla^{2}\vp)$ has dimension great than $1$. To deal with this case, we apply a perturbation argument, as in \cite{CTW16}. For $\beta=1,2,\cdots,2n$, we write $V_{\beta}$ for the $g$-unit eigenvector of $\lambda_{\beta}(\nabla^{2}\vp)(x_{0})$ and denote the components of $V_{\beta}$ by $(V_{\beta}^{1},V_{\beta}^{2},\cdots,V_{\beta}^{2n})$. Next we extend $V_{\beta}$ to be vector fields near $x_{0}$ by taking the components to be constant and define a local endomorphism $\Phi_{\beta}^{\alpha}$ by
\begin{equation*}
\Phi_{\beta}^{\alpha} = g^{\alpha\gamma}\vp_{\gamma\beta}-g^{\alpha\gamma}B_{\gamma\beta}, \,
B_{\alpha\beta} = \delta_{\alpha\beta}-V_{1}^{\alpha}V_{1}^{\beta}.
\end{equation*}
Let $\lambda_{1}(\Phi)\geq\lambda_{2}(\Phi)\geq\cdots\geq\lambda_{2n}(\Phi)$ be the eigenvalues of $\Phi$. It follows that the vector $V_{\beta}(x_{0})$ is still the eigenvector of $\lambda_{\beta}(\Phi)(x_{0})$. By the definition of $\Phi$, at $x_{0}$, we have $\lambda_{1}(\Phi)>\lambda_{2}(\Phi)$, which implies the eigenspace of $\Phi$ corresponding to $\lambda_{1}(\Phi)$ has dimension $1$. Then $\lambda_{1}(\Phi)$ is smooth near $x_{0}$. In a neighborhood of $x_{0}$, we consider the perturbed quantity $\hat{Q}$ defined by
\begin{equation*}
\hat{Q} = \log\lambda_{1}(\Phi)+h_{1}(|\ti{\omega}|_{g}^{2})+h_{2}(|\de\vp|_{g}^{2})+e^{-A\vp}.
\end{equation*}
Since $\lambda_{1}(\Phi)(x_{0})=\lambda_{1}(\nabla^{2}\vp)(x_{0})$ and $\lambda_{1}(\Phi)\leq\lambda_{1}(\nabla^{2}\vp)$ near $x_{0}$, $\hat{Q}$ still attains a maximum at $x_{0}$. For convenience, we use $\lambda_{\beta}$ to denote $\lambda_{\beta}(\Phi)$ in the following argument.

On the other hand, by (\ref{Second order estimate equation 3}) and the definitions of $Q$, $\hat{Q}$ and $x_{0}$, it is clear that
\begin{equation}\label{Second order estimate equation 4}
\lambda_{1}(x_{0}) \leq M_{R} \leq C_{A}\lambda_{1}(x_{0}),
\end{equation}
where $M_{R}=\sup_{M}|\nabla^{2}\vp|_{g}+1$ and $C_{A}$ denotes a uniform constant depending on $A$. Without loss of generality, we assume that $\lambda_{1}(x_{0})\gg1$ in the following argument.

\subsection{Lower bound of $\ti{g}^{i\ov{i}}\hat{Q}_{i\ov{i}}$}
In this subsection, our aim is to obtain a lower bound of $\ti{g}^{i\ov{i}}\hat{Q}_{i\ov{i}}$ at $x_{0}$. First, we compute $\ti{g}^{i\ov{i}}(\lambda_{1})_{i\ov{i}}$ and $\ti{g}^{i\ov{i}}(|\ti{\omega}|_{g}^{2})_{i\ov{i}}$. Here we note that all the subscripts of a function denote the covariant derivatives with respect to the Levi-Civita connection $\nabla$.

\begin{lemma}\label{Calculation 2}
At $x_{0}$, we have
\begin{equation}\label{Calculation 2 equation 1}
\begin{split}
\ti{g}^{i\ov{i}}(\lambda_{1})_{i\ov{i}}
\geq {} & 2\sum_{\alpha>1}\frac{\ti{g}^{i\ov{i}}|\vp_{V_{\alpha}V_{1}i}|^{2}}{\lambda_{1}-\lambda_{\alpha}}
          +\sum_{p\neq q}\ti{g}^{p\ov{p}}\ti{g}^{q\ov{q}}|(\ti{g}_{p\ov{q}})_{V_{1}}|^{2}-C\lambda_{1}\sum_{i}\ti{g}^{i\ov{i}} \\
        & -\ti{g}^{i\ov{i}}\left(S_{i\ov{i}}^{p}\vp_{V_{1}V_{1}p}+S_{i\ov{i}}^{\ov{p}}\vp_{V_{1}V_{1}\ov{p}}\right)
\end{split}
\end{equation}
and
\begin{equation}\label{Calculation 2 equation 2}
\begin{split}
\ti{g}^{i\ov{i}}(|\ti{\omega}|_{g}^{2})
\geq {} & 2\sum_{k,l}\ti{g}^{i\ov{i}}|(\ti{g}_{k\ov{l}})_{i}|^{2}-CM_{R}^{2}\sum_{i}\ti{g}^{i\ov{i}} \\
        & -2\sum_{k}\ti{g}_{k\ov{k}}\ti{g}^{i\ov{i}}\left(S_{i\ov{i}}^{p}(\ti{g}_{k\ov{k}})_{p}+S_{i\ov{i}}^{\ov{p}}(\ti{g}_{k\ov{k}})_{\ov{p}}\right),
\end{split}
\end{equation}
where $M_{R}=\sup_{M}|\nabla^{2}\vp|_{g}+1$.
\end{lemma}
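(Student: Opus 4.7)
My plan is to prove both inequalities in Lemma \ref{Calculation 2} by combining four ingredients at the maximum point $x_{0}$: the standard formula for the covariant Hessian of a simple eigenvalue of a symmetric endomorphism, the Ricci commutation identity (\ref{Commutation formula}), the decomposition (\ref{Expression of tilde g}) relating $\vp_{i\ov{i}}$ to $\ti{g}_{i\ov{i}}$, and the concavity of $\log\det$ applied to the Monge-Amp\`ere equation differentiated in the appropriate direction. The almost Hermitian framework forces the torsion tensor $S$ to appear explicitly in the output, exactly as it did in (\ref{First order estimate equation 10}).

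For (\ref{Calculation 2 equation 1}), the $B$-perturbation ensures $\lambda_{1}(\Phi)>\lambda_{2}(\Phi)$ at $x_{0}$, so $\lambda_{1}$ is smooth near $x_{0}$ and
\begin{equation*}
(\lambda_{1})_{i\ov{i}}(x_{0}) = (\Phi_{V_{1}V_{1}})_{i\ov{i}} + 2\sum_{\alpha>1}\frac{|(\Phi_{V_{1}V_{\alpha}})_{i}|^{2}}{\lambda_{1}-\lambda_{\alpha}}.
\end{equation*}
Since $B_{V_{1}V_{1}}(x_{0})=1-|V_{1}|^{2}=0$, the $V_{\beta}$ and $B$ have constant components, and $g^{\alpha\gamma}$ is first-order flat at $x_{0}$, one checks that $(\Phi_{V_{1}V_{1}})_{i\ov{i}}=\vp_{V_{1}V_{1}i\ov{i}}+O(\lambda_{1})$ and $(\Phi_{V_{1}V_{\alpha}})_{i}=\vp_{V_{1}V_{\alpha}i}+O(1)$. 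Using (\ref{Commutation formula}) I then swap the outer pair $\nabla_{\ov{e}_{i}}\nabla_{e_{i}}$ past the inner pair $\nabla_{V_{1}}\nabla_{V_{1}}$; the commutator is of the form $\textrm{Rm}\ast\nabla^{2}\vp+\nabla\textrm{Rm}\ast\nabla\vp$, hence $O(\lambda_{1})$. Plugging (\ref{Expression of tilde g}) into $\vp_{i\ov{i}}$ and differentiating twice in $V_{1}$, the only pieces that are not absorbed into $O(\lambda_{1})\sum\ti{g}^{i\ov{i}}$ are $\ti{g}^{i\ov{i}}(\ti{g}_{i\ov{i}})_{V_{1}V_{1}}$ and the $S$-third-order terms $-\ti{g}^{i\ov{i}}(S^{p}_{i\ov{i}}\vp_{V_{1}V_{1}p}+S^{\ov{p}}_{i\ov{i}}\vp_{V_{1}V_{1}\ov{p}})$ that survive on the right-hand side of (\ref{Calculation 2 equation 1}). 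Finally, differentiating $\log\det\ti{g}=\log f+\log\det g$ twice in $V_{1}$ and using the concavity of $\log\det$ yields
\begin{equation*}
\ti{g}^{i\ov{i}}(\ti{g}_{i\ov{i}})_{V_{1}V_{1}}\ \geq\ \sum_{p,q}\ti{g}^{p\ov{p}}\ti{g}^{q\ov{q}}|(\ti{g}_{p\ov{q}})_{V_{1}}|^{2}+(\log f)_{V_{1}V_{1}}+O(1);
\end{equation*}
writing $\log f=n\log f^{1/n}$, the assumed bounds on $f^{1/n}$ together with the AM-GM inequality $1/f^{1/n}\leq \frac{1}{n}\sum\ti{g}^{i\ov{i}}$ (and $\lambda_{1}\gg 1$) absorb $(\log f)_{V_{1}V_{1}}$ into $-C\lambda_{1}\sum\ti{g}^{i\ov{i}}$, while dropping the nonnegative $p=q$ diagonal part leaves the claimed $\sum_{p\neq q}$ form.

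For (\ref{Calculation 2 equation 2}), expanding $|\ti{\omega}|_{g}^{2}=g^{p\ov{q}}g^{r\ov{s}}\ti{g}_{p\ov{s}}\ti{g}_{r\ov{q}}$ by Leibniz at $x_{0}$, where $g$ is first-order flat, gives
\begin{equation*}
\ti{g}^{i\ov{i}}(|\ti{\omega}|_{g}^{2})_{i\ov{i}} = 2\sum_{k,l}\ti{g}^{i\ov{i}}|(\ti{g}_{k\ov{l}})_{i}|^{2} + 2\,\textrm{Re}\Bigl(\sum_{k,l}\ti{g}_{l\ov{k}}\,\ti{g}^{i\ov{i}}(\ti{g}_{k\ov{l}})_{i\ov{i}}\Bigr) + O(M_{R}^{2})\sum_{i}\ti{g}^{i\ov{i}},
\end{equation*}
where the last error collects curvature-times-$|\ti{\omega}|_{g}^{2}$ contributions. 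At $x_{0}$ the diagonal form of $\ti{g}$ collapses the middle sum to $2\sum_{k}\ti{g}_{k\ov{k}}\,\ti{g}^{i\ov{i}}(\ti{g}_{k\ov{k}})_{i\ov{i}}$. Using (\ref{Commutation formula}) together with (\ref{Expression of tilde g}) I swap $(\ti{g}_{k\ov{k}})_{i\ov{i}}$ with $(\ti{g}_{i\ov{i}})_{k\ov{k}}$; the Ricci commutator contributes $O(M_{R})$, which multiplied by $\sum_{k}\ti{g}_{k\ov{k}}\leq CM_{R}$ produces the $-CM_{R}^{2}\sum\ti{g}^{i\ov{i}}$ piece, while the $S$-third-order terms from differentiating $S^{p}_{\bullet}\vp_{p}$, after a rearrangement parallel to (\ref{First order estimate equation 10}), consolidate into $-2\sum_{k}\ti{g}_{k\ov{k}}\ti{g}^{i\ov{i}}(S^{p}_{i\ov{i}}(\ti{g}_{k\ov{k}})_{p}+S^{\ov{p}}_{i\ov{i}}(\ti{g}_{k\ov{k}})_{\ov{p}})$. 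Finally, concavity of $\log\det$ applied in the direction $e_{k}$ bounds $\ti{g}^{i\ov{i}}(\ti{g}_{i\ov{i}})_{k\ov{k}}$ from below by a nonnegative quadratic in $(\ti{g}_{\bullet\ov{\bullet}})_{k}$ plus $(\log f)_{k\ov{k}}$; weighting by $\ti{g}_{k\ov{k}}$ and summing over $k$, the quadratic is $\geq 0$ and is dropped, while $\sum_{k}\ti{g}_{k\ov{k}}(\log f)_{k\ov{k}}$ is again absorbed into $-CM_{R}^{2}\sum\ti{g}^{i\ov{i}}$ via the AM-GM mechanism together with $\sum_{k}\ti{g}_{k\ov{k}}\leq CM_{R}$.

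The principal obstacle I expect is the meticulous bookkeeping of the various third-order $\vp$-terms: those arising from the Ricci commutator, those produced by the two-$V_{1}$ (respectively $k\ov{k}$) derivatives of $S^{p}_{\bullet}\vp_{p}$, and those coming from the concavity identity. Many of them do not fit into the explicit error budgets $-C\lambda_{1}\sum\ti{g}^{i\ov{i}}$ or $-CM_{R}^{2}\sum\ti{g}^{i\ov{i}}$ and must be retained in the precise symmetric $S$-form displayed in the lemma, because they will later be cancelled against matching third-order terms from the $h_{2}(|\de\vp|_{g}^{2})$ and $e^{-A\vp}$ pieces of $\hat{Q}$ in the proof of Proposition \ref{Second order estimate}, exactly as in (\ref{First order estimate equation 10}). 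Getting all signs, indices, and the $\lambda_{1}$-versus-$M_{R}$ bookkeeping right will be the bulk of the work.
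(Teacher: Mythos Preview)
Your overall architecture matches the paper's: eigenvalue Hessian formula, Ricci commutation, the decomposition (\ref{Expression of tilde g}), and two $V_{1}$-derivatives of $\log\det\ti{g}$. There is, however, one genuine gap that affects both inequalities.

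You assert that $(\log f)_{V_{1}V_{1}}$ can be absorbed into $-C\lambda_{1}\sum_{i}\ti{g}^{i\ov{i}}$ (respectively $(\log f)_{k\ov{k}}$ into $-CM_{R}^{2}\sum_{i}\ti{g}^{i\ov{i}}$) via the AM--GM bound $f^{-1/n}\leq\frac{1}{n}\sum_{i}\ti{g}^{i\ov{i}}$. Writing $\log f=n\log f^{1/n}$, you get
\[
(\log f)_{V_{1}V_{1}}=n\,\frac{(f^{1/n})_{V_{1}V_{1}}}{f^{1/n}}-n\,\frac{|(f^{1/n})_{V_{1}}|^{2}}{f^{2/n}}.
\]
The first piece is indeed $\geq -C\sum_{i}\ti{g}^{i\ov{i}}$. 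But the second piece only gives $\geq -C f^{-2/n}\geq -C\bigl(\sum_{i}\ti{g}^{i\ov{i}}\bigr)^{2}$, and there is no reason that $\sum_{i}\ti{g}^{i\ov{i}}\leq C\lambda_{1}$: in the degenerate regime $\ti{g}_{n\ov{n}}$ can be as small as $f/(C\lambda_{1})^{n-1}$, so $\ti{g}^{n\ov{n}}$ is not controlled by $\lambda_{1}$. Thus you cannot simply drop the diagonal $p=q$ part of the concavity term \emph{and} separately absorb $(\log f)_{V_{1}V_{1}}$.

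The paper resolves this by keeping the diagonal piece and cancelling it against the bad $f^{-2/n}$ term. Differentiating (\ref{CMAE}) once gives $\sum_{p}\ti{g}^{p\ov{p}}(\ti{g}_{p\ov{p}})_{V_{1}}=n(f^{1/n})_{V_{1}}/f^{1/n}$, so
\[
\frac{n|(f^{1/n})_{V_{1}}|^{2}}{f^{2/n}}=\frac{1}{n}\Bigl|\sum_{p}\ti{g}^{p\ov{p}}(\ti{g}_{p\ov{p}})_{V_{1}}\Bigr|^{2}
\leq \sum_{p}\bigl|\ti{g}^{p\ov{p}}(\ti{g}_{p\ov{p}})_{V_{1}}\bigr|^{2},
\]
by Cauchy--Schwarz, and the right-hand side is exactly the $p=q$ part of $\sum_{p,q}\ti{g}^{p\ov{p}}\ti{g}^{q\ov{q}}|(\ti{g}_{p\ov{q}})_{V_{1}}|^{2}$. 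After this cancellation only the off-diagonal sum and the harmless $n(f^{1/n})_{V_{1}V_{1}}/f^{1/n}$ survive, yielding (\ref{Calculation 2 equation 1}). The same mechanism, applied in the $e_{k}$ direction, is what makes (\ref{Calculation 2 equation 2}) go through. This cancellation is precisely the reason the estimate depends on $f^{1/n}$ rather than $f^{1/m}$ for some $m>n$; you should incorporate it explicitly.
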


\begin{proof}
First, let us recall the elementary formulas (see \cite[Lemma 5.7]{CTW16}), holding at $x_{0}$,
\begin{equation}\label{Elementary formulas}
\begin{split}
\lambda_{1}^{\alpha\beta} {} & := \frac{\partial \lambda_{1}}{\partial \Phi^{\alpha}_{\beta}}=V_{1}^{\alpha}V_{1}^{\beta}, \\
\lambda_{1}^{\alpha\beta,\gamma\delta} {} & := \frac{\partial^{2}\lambda_{1}}{\partial \Phi^{\alpha}_{\beta}\partial\Phi^{\gamma}_{\delta}}
= \sum_{\mu>1}\frac{V_{1}^{\alpha}V_{\mu}^{\beta}V_{\mu}^{\gamma}V_{1}^{\delta}+V_{\mu}^{\alpha}V_{1}^{\beta}V_{1}^{\gamma}V_{\mu}^{\delta}}
  {\lambda_{1}-\lambda_{\mu}}.
\end{split}
\end{equation}

For (\ref{Calculation 2 equation 1}), using (\ref{Elementary formulas}) and (\ref{Property of coordinate system}), we compute
\begin{equation}\label{Calculation 2 equation 1.1}
\begin{split}
\ti{g}^{i\ov{i}}(\lambda_{1})_{i\ov{i}}
 =   {} & \ti{g}^{i\ov{i}}\lambda_{1}^{\alpha\beta,\gamma\delta}(\Phi_{\delta}^{\gamma})_{i}(\Phi_{\beta}^{\alpha})_{\ov{i}}
          +\ti{g}^{i\ov{i}}\lambda_{1}^{\alpha\beta}(\Phi_{\beta}^{\alpha})_{i\ov{i}} \\
 =   {} & \ti{g}^{i\ov{i}}\lambda_{1}^{\alpha\beta,\gamma\delta}\vp_{\gamma\delta i}\vp_{\alpha\beta\ov{i}}
          +\ti{g}^{i\ov{i}}\lambda_{1}^{\alpha\beta}\vp_{\alpha\beta i\ov{i}}
          -\ti{g}^{i\ov{i}}\lambda_{1}^{\alpha\beta}(B_{\alpha\beta})_{i\ov{i}} \\
\geq {} & 2\sum_{\alpha>1}\frac{\ti{g}^{i\ov{i}}|\vp_{V_{\alpha}V_{1}i}|^{2}}{\lambda_{1}-\lambda_{\alpha}}
          +\ti{g}^{i\ov{i}}\vp_{V_{1}V_{1}i\ov{i}}-C\sum_{i}\ti{g}^{i\ov{i}} \\
\geq {} & 2\sum_{\alpha>1}\frac{\ti{g}^{i\ov{i}}|\vp_{V_{\alpha}V_{1}i}|^{2}}{\lambda_{1}-\lambda_{\alpha}}
          +\ti{g}^{i\ov{i}}\vp_{i\ov{i}V_{1}V_{1}}-C\lambda_{1}\sum_{i}\ti{g}^{i\ov{i}},
\end{split}
\end{equation}
where we used (\ref{Commutation formula}) and (\ref{Second order estimate equation 3}) in the last inequality. Recalling (\ref{Expression of tilde g}) and using (\ref{Commutation formula}) again, we see that
\begin{equation}\label{Calculation 2 equation 1.2}
\begin{split}
\ti{g}^{i\ov{i}}\vp_{i\ov{i}V_{1}V_{1}}
   = {} & \ti{g}^{i\ov{i}}\left(\ti{g}_{i\ov{i}}-g_{i\ov{i}}-S_{i\ov{i}}^{p}\vp_{p}-S_{i\ov{i}}^{\ov{p}}\vp_{\ov{p}}\right)_{V_{1}V_{1}} \\[1mm]
\geq {} & \ti{g}^{i\ov{i}}(\ti{g}_{i\ov{i}})_{V_{1}V_{1}}
          -\ti{g}^{i\ov{i}}\left(S_{i\ov{i}}^{p}\vp_{pV_{1}V_{1}}+S_{i\ov{i}}^{\ov{p}}\vp_{\ov{p}V_{1}V_{1}}\right)
          -C\lambda_{1}\sum_{i}\ti{g}^{i\ov{i}} \\
\geq {} & \ti{g}^{i\ov{i}}(\ti{g}_{i\ov{i}})_{V_{1}V_{1}}
          -\ti{g}^{i\ov{i}}\left(S_{i\ov{i}}^{p}\vp_{V_{1}V_{1}p}+S_{i\ov{i}}^{\ov{p}}\vp_{V_{1}V_{1}\ov{p}}\right)
          -C\lambda_{1}\sum_{i}\ti{g}^{i\ov{i}}.
\end{split}
\end{equation}
Applying $\nabla_{V_{1}}$ to the logarithm of (\ref{CMAE}), it follows that
\begin{equation}\label{Calculation 2 equation 1.3}
\ti{g}^{i\ov{j}}(\ti{g}_{i\ov{j}})_{V_{1}} = \frac{n(f^{\frac{1}{n}})_{V_{1}}}{f^{\frac{1}{n}}}.
\end{equation}
Applying $\nabla_{V_{1}}$ again, at $x_{0}$, we have
\begin{equation}\label{Calculation 2 equation 1.4}
\ti{g}^{i\ov{i}}(\ti{g}_{i\ov{i}})_{V_{1}V_{1}}
= \ti{g}^{p\ov{p}}\ti{g}^{q\ov{q}}|(\ti{g}_{p\ov{q}})_{V_{1}}|^{2}
  +\frac{n(f^{\frac{1}{n}})_{V_{1}V_{1}}}{f^{\frac{1}{n}}}
  -\frac{n|(f^{\frac{1}{n}})_{V_{1}}|^{2}}{f^{\frac{2}{n}}}.
\end{equation}
Substituting (\ref{Calculation 2 equation 1.3}) into (\ref{Calculation 2 equation 1.4}) and using the Cauchy-Schwarz inequality, we compute
\begin{equation*}
\begin{split}
\ti{g}^{i\ov{i}}(\ti{g}_{i\ov{i}})_{V_{1}V_{1}}
  =  {} & \sum_{p\neq q}\ti{g}^{p\ov{p}}\ti{g}^{q\ov{q}}|(\ti{g}_{p\ov{q}})_{V_{1}}|^{2}
          +\sum_{p}\left|\ti{g}^{p\ov{p}}(\ti{g}_{p\ov{p}})_{V_{1}}\right|^{2} \\
        & +\frac{n(f^{\frac{1}{n}})_{V_{1}V_{1}}}{f^{\frac{1}{n}}}
          -\frac{1}{n}\left|\sum_{p}\ti{g}^{p\ov{p}}(\ti{g}_{p\ov{p}})_{V_{1}}\right|^{2} \\
\geq {} & \sum_{p\neq q}\ti{g}^{p\ov{p}}\ti{g}^{q\ov{q}}|(\ti{g}_{p\ov{q}})_{V_{1}}|^{2}
          +\frac{n(f^{\frac{1}{n}})_{V_{1}V_{1}}}{f^{\frac{1}{n}}}.
\end{split}
\end{equation*}
Combining this with (\ref{Arithmetic-geometric mean inequality}), it is clear that
\begin{equation}\label{Calculation 2 equation 1.5}
\ti{g}^{i\ov{i}}(\ti{g}_{i\ov{i}})_{V_{1}V_{1}}
\geq \sum_{p\neq q}\ti{g}^{p\ov{p}}\ti{g}^{q\ov{q}}|(\ti{g}_{p\ov{q}})_{V_{1}}|^{2}-C\sum_{i}\ti{g}^{i\ov{i}}.
\end{equation}
Then the inequality (\ref{Calculation 2 equation 1}) follows from (\ref{Calculation 2 equation 1.1}), (\ref{Calculation 2 equation 1.2}) and (\ref{Calculation 2 equation 1.5}).

For (\ref{Calculation 2 equation 2}), a direct calculation shows that
\begin{equation}\label{Calculation 2 equation 2.1}
\ti{g}^{i\ov{i}}(|\ti{\omega}|_{g}^{2})
= 2\sum_{k,l}\ti{g}^{i\ov{i}}\left|(\ti{g}_{k\ov{l}})_{i}\right|^{2}+2\sum_{k}\ti{g}_{k\ov{k}}\ti{g}^{i\ov{i}}(\ti{g}_{k\ov{k}})_{i\ov{i}}.
\end{equation}
By (\ref{Expression of tilde g}) and (\ref{Commutation formula}), for each $k=1,2,\cdots,n$, we have
\begin{equation}\label{Calculation 2 equation 2.2}
\begin{split}
\ti{g}^{i\ov{i}}(\ti{g}_{k\ov{k}})_{i\ov{i}}
   = {} & \ti{g}^{i\ov{i}}\left(g_{k\ov{k}}+\vp_{k\ov{k}}+S_{k\ov{k}}^{p}\vp_{p}+S_{k\ov{k}}^{\ov{p}}\vp_{\ov{p}}\right)_{i\ov{i}} \\
\geq {} & \ti{g}^{i\ov{i}}\vp_{k\ov{k}i\ov{i}}
          +\ti{g}^{i\ov{i}}\left(S_{k\ov{k}}^{p}\vp_{pi\ov{i}}+S_{k\ov{k}}^{\ov{p}}\vp_{\ov{p}i\ov{i}}\right)
          -C\lambda_{1}\sum_{i}\ti{g}^{i\ov{i}} \\
\geq {} & \ti{g}^{i\ov{i}}\vp_{i\ov{i}k\ov{k}}
          +\ti{g}^{i\ov{i}}(\ti{g}_{i\ov{i}})_{p}S_{k\ov{k}}^{p}+\ti{g}^{i\ov{i}}(\ti{g}_{i\ov{i}})_{\ov{p}}S_{k\ov{k}}^{\ov{p}}
          -C\lambda_{1}\sum_{i}\ti{g}^{i\ov{i}}.
\end{split}
\end{equation}
By the similar calculations of (\ref{Calculation 2 equation 1.3}) and (\ref{Calculation 2 equation 1.5}), it follows from (\ref{Arithmetic-geometric mean inequality}) that
\begin{equation}\label{Calculation 2 equation 2.3}
|\ti{g}^{i\ov{i}}(\ti{g}_{i\ov{i}})_{p}|
  =  \frac{|n(f^{\frac{1}{n}})_{p}|}{f^{\frac{1}{n}}}
\geq -C\sum_{i}\ti{g}^{i\ov{i}}
\end{equation}
and
\begin{equation}\label{Calculation 2 equation 2.4}
\ti{g}^{i\ov{i}}(\ti{g}_{i\ov{i}})_{k\ov{k}}
\geq \sum_{p\neq q}\ti{g}^{p\ov{p}}\ti{g}^{q\ov{q}}|(\ti{g}_{p\ov{q}})_{k}|^{2}-C\sum_{i}\ti{g}^{i\ov{i}}.
\end{equation}
For the first term of (\ref{Calculation 2 equation 2.2}), using (\ref{Expression of tilde g}), (\ref{Commutation formula}) and (\ref{Calculation 2 equation 2.4}), we compute
\begin{equation}\label{Calculation 2 equation 2.5}
\begin{split}
\ti{g}^{i\ov{i}}\vp_{i\ov{i}k\ov{k}}
   = {} & \ti{g}^{i\ov{i}}\left(\ti{g}_{i\ov{i}}-g_{i\ov{i}}-S_{i\ov{i}}^{p}\vp_{p}-S_{i\ov{i}}^{\ov{p}}\vp_{\ov{p}}\right)_{k\ov{k}} \\
\geq {} & \ti{g}^{i\ov{i}}(\ti{g}_{i\ov{i}})_{k\ov{k}}
          -\ti{g}^{i\ov{i}}\left(S_{i\ov{i}}^{p}\vp_{pk\ov{k}}+S_{i\ov{i}}^{\ov{p}}\vp_{\ov{p}k\ov{k}}\right)
          -C\lambda_{1}\sum_{i}\ti{g}^{i\ov{i}} \\
\geq {} & \ti{g}^{i\ov{i}}(\ti{g}_{i\ov{i}})_{k\ov{k}}
          -\ti{g}^{i\ov{i}}\left(S_{i\ov{i}}^{p}\vp_{k\ov{k}p}+S_{i\ov{i}}^{\ov{p}}\vp_{k\ov{k}\ov{p}}\right)
          -C\lambda_{1}\sum_{i}\ti{g}^{i\ov{i}} \\
\geq {} & \sum_{p\neq q}\ti{g}^{p\ov{p}}\ti{g}^{q\ov{q}}|(\ti{g}_{p\ov{q}})_{k}|^{2}
          -\ti{g}^{i\ov{i}}S_{i\ov{i}}^{p}(\ti{g}_{k\ov{k}})_{p}-\ti{g}^{i\ov{i}}S_{i\ov{i}}^{\ov{p}}(\ti{g}_{k\ov{k}})_{\ov{p}}
          -C\lambda_{1}\sum_{i}\ti{g}^{i\ov{i}}.
\end{split}
\end{equation}
For the second and third terms of (\ref{Calculation 2 equation 2.2}), by (\ref{Calculation 2 equation 2.3}), we get
\begin{equation}\label{Calculation 2 equation 2.6}
\ti{g}^{i\ov{i}}(\ti{g}_{i\ov{i}})_{p}S_{k\ov{k}}^{p}+\ti{g}^{i\ov{i}}(\ti{g}_{i\ov{i}})_{\ov{p}}S_{k\ov{k}}^{\ov{p}}
\geq -C\sum_{i}\ti{g}^{i\ov{i}}.
\end{equation}
Substituting (\ref{Calculation 2 equation 2.5}) and (\ref{Calculation 2 equation 2.6}) into (\ref{Calculation 2 equation 2.2}), it is clear that
\begin{equation}\label{Calculation 2 equation 2.7}
\begin{split}
\ti{g}^{i\ov{i}}(\ti{g}_{k\ov{k}})_{i\ov{i}}
\geq {} & \sum_{p\neq q}\ti{g}^{p\ov{p}}\ti{g}^{q\ov{q}}|(\ti{g}_{p\ov{q}})_{k}|^{2}
          -\ti{g}^{i\ov{i}}S_{i\ov{i}}^{p}(\ti{g}_{k\ov{k}})_{p} \\
        & -\ti{g}^{i\ov{i}}S_{i\ov{i}}^{\ov{p}}(\ti{g}_{k\ov{k}})_{\ov{p}}
          -C\lambda_{1}\sum_{i}\ti{g}^{i\ov{i}} \\
\geq {} & -\ti{g}^{i\ov{i}}S_{i\ov{i}}^{p}(\ti{g}_{k\ov{k}})_{p}
          -\ti{g}^{i\ov{i}}S_{i\ov{i}}^{\ov{p}}(\ti{g}_{k\ov{k}})_{\ov{p}}
          -C\lambda_{1}\sum_{i}\ti{g}^{i\ov{i}}.
\end{split}
\end{equation}
By (\ref{Second order estimate equation 3}), we have $0<\ti{g}_{k\ov{k}}\leq C\lambda_{1}$. Combining this with (\ref{Calculation 2 equation 2.1}) and (\ref{Calculation 2 equation 2.7}), we see that
\begin{equation*}
\begin{split}
\ti{g}^{i\ov{i}}(|\ti{\omega}|_{g}^{2})
\geq {} & 2\sum_{k,l}\ti{g}^{i\ov{i}}|(\ti{g}_{k\ov{l}})_{i}|^{2}-C\lambda_{1}^{2}\sum_{i}\ti{g}^{i\ov{i}} \\
        & -2\sum_{k}\ti{g}_{k\ov{k}}\ti{g}^{i\ov{i}}\left(S_{i\ov{i}}^{p}(\ti{g}_{k\ov{k}})_{p}+S_{i\ov{i}}^{\ov{p}}(\ti{g}_{k\ov{k}})_{\ov{p}}\right).
\end{split}
\end{equation*}
Using $\lambda_{1}\leq M_{R}$, we obtain the inequality (\ref{Calculation 2 equation 2}).
\end{proof}

\begin{lemma}\label{Main inequality}
At $x_{0}$, we have
\begin{equation}\label{Main inequality equation 1}
\begin{split}
0 \geq {} & \ti{g}^{i\ov{i}}\hat{Q}_{i\ov{i}} \\
  \geq {} & 2\sum_{\alpha>1}\frac{\ti{g}^{i\ov{i}}|\vp_{V_{\alpha}V_{1}i}|^{2}}{\lambda_{1}(\lambda_{1}-\lambda_{\alpha})}
            +\sum_{p\neq q}\frac{\ti{g}^{p\ov{p}}\ti{g}^{q\ov{q}}|(\ti{g}_{p\ov{q}})_{V_{1}}|^{2}}{\lambda_{1}}
            -\frac{\ti{g}^{i\ov{i}}|\vp_{V_{1}V_{1}i}|^{2}}{\lambda_{1}^{2}} \\
          & +2h_{1}'\sum_{k,l}\ti{g}^{i\ov{i}}|(\ti{g}_{k\ov{l}})_{i}|^{2}
            +h_{2}'\sum_{k}\ti{g}^{i\ov{i}}(|\vp_{ik}|^{2}+|\vp_{i\ov{k}}|^{2}) \\
          & +h_{1}''\ti{g}^{i\ov{i}}|(|\ti{\omega}|_{g}^{2})_{i}|^{2}
            +h_{2}''\ti{g}^{i\ov{i}}|(|\de\vp|_{g}^{2})_{i}|^{2}
            +A^{2}e^{-A\vp}\ti{g}^{i\ov{i}}|\vp_{i}|^{2} \\[3mm]
          & +(Ae^{-A\vp}-C)\sum_{i}\ti{g}^{i\ov{i}}-Ane^{-A\vp}.
\end{split}
\end{equation}
\end{lemma}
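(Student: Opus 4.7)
The plan is to apply the maximum principle to $\hat Q$ at $x_{0}$, yielding $\ti g^{i\ov i}\hat Q_{i\ov i}\le 0$; all that remains is to produce the claimed lower bound on the right side of (\ref{Main inequality equation 1}). I would expand $\hat Q_{i\ov i}$ by the chain rule into the natural seven pieces (a second-derivative and a squared-first-derivative for each of $\log\lambda_{1}$, $h_{1}(|\ti\omega|_{g}^{2})$ and $h_{2}(|\de\vp|_{g}^{2})$, plus $\ti g^{i\ov i}(e^{-A\vp})_{i\ov i}$), and then substitute the lower bounds already prepared: (\ref{Calculation 2 equation 1}) for $\ti g^{i\ov i}(\lambda_{1})_{i\ov i}$, (\ref{Calculation 2 equation 2}) for $\ti g^{i\ov i}(|\ti\omega|_{g}^{2})_{i\ov i}$, Lemma \ref{Calculation 1} for $\ti g^{i\ov i}(|\de\vp|_{g}^{2})_{i\ov i}$, and (\ref{First order estimate equation 1 term 3}) for the exponential. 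The ``good'' squared third-order blocks together with the $h_{1}''$, $h_{2}''$ and $A^{2}$ squared-first-derivative terms appear at this stage. The bad piece $-\ti g^{i\ov i}|\vp_{V_{1}V_{1}i}|^{2}/\lambda_{1}^{2}$ comes from $|(\lambda_{1})_{i}|^{2}$: since $V_{1},\ldots,V_{2n}$ are extended with constant components, $(\lambda_{1})_{i}=\vp_{V_{1}V_{1}i}-V_{1}^{\alpha}V_{1}^{\beta}(B_{\alpha\beta})_{i}=\vp_{V_{1}V_{1}i}$ at $x_{0}$.

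The main obstacle is the flock of nuisance $S$-tensor third-order terms these lower bounds drag in: Lemma \ref{Calculation 2} contributes $-\ti g^{i\ov i}(S_{i\ov i}^{p}\vp_{V_{1}V_{1}p}+S_{i\ov i}^{\ov p}\vp_{V_{1}V_{1}\ov p})/\lambda_{1}$ together with an $h_{1}'$-weighted $(\ti g_{k\ov k})_{p}$ analogue, Lemma \ref{Calculation 1} contributes an $h_{2}'$-weighted $\vp_{p\ov k}\vp_{k}$ term, and $(e^{-A\vp})_{i\ov i}$ contributes the single-derivative $Ae^{-A\vp}\ti g^{i\ov i}(S_{i\ov i}^{p}\vp_{p}+S_{i\ov i}^{\ov p}\vp_{\ov p})$. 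None of these fit into the good squared blocks. To cancel them simultaneously I would mimic the device in (\ref{First order estimate equation 10}): the critical point condition $\hat Q_{p}(x_{0})=0$ gives
\begin{equation*}
0=-2\,\mathrm{Re}\bigl(\ti g^{i\ov i}S_{i\ov i}^{p}\hat Q_{p}\bigr),
\end{equation*}
and expanding via $\hat Q_{p}=(\lambda_{1})_{p}/\lambda_{1}+h_{1}'(|\ti\omega|_{g}^{2})_{p}+h_{2}'(|\de\vp|_{g}^{2})_{p}-Ae^{-A\vp}\vp_{p}$ reproduces, term by term, exactly the nuisance $S$ terms above with matching signs and weights (the first using $(\lambda_{1})_{p}=\vp_{V_{1}V_{1}p}$ at $x_{0}$, and the last exploiting the Levi-Civita symmetry $\vp_{\ov k p}=\vp_{p\ov k}$ just as in (\ref{First order estimate equation 10})). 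Adding this zero identity to the running lower bound kills every $S$-dependent third-order term at once.

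After the cancellation, every remaining contribution is either one of the six quadratic blocks listed on the right of (\ref{Main inequality equation 1}) or a lower-order curvature/first-derivative error bounded by $C\sum_{i}\ti g^{i\ov i}+C$, using Proposition \ref{First order estimate} for $|\de\vp|_{g}^{2}$, the bound $\lambda_{1}\le M_{R}$ from (\ref{Second order estimate equation 4}), and (\ref{Property 2 of h}) for $h_{1}',h_{2}'$. These errors are absorbed into $(Ae^{-A\vp}-C)\sum_{i}\ti g^{i\ov i}-Ane^{-A\vp}$, yielding (\ref{Main inequality equation 1}) as claimed.
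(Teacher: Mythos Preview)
Your proposal is correct and follows essentially the same route as the paper: expand $\ti g^{i\ov i}\hat Q_{i\ov i}$, plug in the prepared lower bounds from Lemma~\ref{Calculation 2} and Lemma~\ref{Calculation 1} together with (\ref{First order estimate equation 1 term 3}), collect the resulting $S$-dependent nuisance terms into a single block (the paper calls it $\ti J$), and observe that this block equals $-2\,\mathrm{Re}\bigl(\ti g^{i\ov i}S_{i\ov i}^{p}\hat Q_{p}\bigr)=0$ by the critical-point condition. The only point worth making explicit is the one the paper singles out: the error $-CM_{R}^{2}\sum_{i}\ti g^{i\ov i}$ coming from (\ref{Calculation 2 equation 2}) is multiplied by $h_{1}'\le \tfrac{1}{15M_{R}^{2}}$ from (\ref{Property 2 of h}) before it can be absorbed into $-C\sum_{i}\ti g^{i\ov i}$, which you allude to but should state outright.
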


\begin{proof}
For convenience, we use $J$ to denote the right hand side of (\ref{Main inequality equation 1}). It suffice to prove $\ti{g}^{i\ov{i}}\hat{Q}_{i\ov{i}} \geq J$ at $x_{0}$. Combining Lemma \ref{Calculation 1}, \ref{Calculation 2} and $h_{1}'\leq\frac{1}{15M_{R}^{2}}$ (see (\ref{Property 2 of h})), we obtain
\begin{equation*}
\ti{g}^{i\ov{i}}\hat{Q}_{i\ov{i}} \geq J+\ti{J},
\end{equation*}
where
\begin{equation}\label{Main inequality equation 2}
\begin{split}
\ti{J}
= {} & -\frac{\ti{g}^{i\ov{i}}(S_{i\ov{i}}^{p}\vp_{V_{1}V_{1}p}+S_{i\ov{i}}^{\ov{p}}\vp_{V_{1}V_{1}\ov{p}})}{\lambda_{1}}
       -2h_{1}'\sum_{k}\ti{g}_{k\ov{k}}\ti{g}^{i\ov{i}}
       \left(S_{i\ov{i}}^{p}(\ti{g}_{k\ov{k}})_{p}+S_{i\ov{i}}^{\ov{p}}(\ti{g}_{k\ov{k}})_{\ov{p}}\right) \\
     & -2h_{2}'\textrm{Re}\left(\sum_{k}\ti{g}^{i\ov{i}}
       (S_{i\ov{i}}^{p}\vp_{p\ov{k}}+S_{i\ov{i}}^{\ov{p}}\vp_{\ov{p}\ov{k}})\vp_{k}\right)
       +Ae^{-A\vp}\ti{g}^{i\ov{i}}(S_{i\ov{i}}^{p}\vp_{p}+S_{i\ov{i}}^{\ov{p}}\vp_{\ov{p}}).
\end{split}
\end{equation}
Using $\hat{Q}_{p}(x_{0})=0$ and the similar calculation of (\ref{First order estimate equation 10}), it then follows that
\begin{equation}\label{Main inequality equation 3}
\ti{J} = -2\textrm{Re}\left(\ti{g}^{i\ov{i}}S_{i\ov{i}}^{p}\hat{Q}_{p}\right) = 0,
\end{equation}
as required.
\end{proof}

\subsection{Proof of Proposition \ref{Second order estimate}}
In this subsection, we give the proof of Proposition \ref{Second order estimate}.

\subsection*{$\bullet$ Partial second order estimate}
We define
\begin{equation*}
I := \{ i\in\{1,\cdots,n\} ~|~ \ti{g}_{i\ov{i}}\geq A^{3}e^{-2A\vp}\ti{g}_{n\ov{n}} \text{~at $x_{0}$} \}.
\end{equation*}
Since $A>1$ and $\sup_{M}\vp=0$, we have $n\notin I$. The following lemma can be regarded as partial second order estimate.

\begin{lemma}\label{Partial second order estimate}
At $x_{0}$, we have
\begin{equation*}
\sum_{k}\sum_{i\notin I}\left(|\vp_{ik}|^{2}+|\vp_{i\ov{k}}|^{2}\right) \leq C_{A},
\end{equation*}
where $C_{A}$ is a uniform constant depending on $A$.
\end{lemma}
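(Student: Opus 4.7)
The plan is to use the critical point condition $\hat{Q}_i(x_0)=0$ to turn the negative third-order term $-\ti{g}^{i\ov{i}}|\vp_{V_1V_1i}|^2/\lambda_1^2$ appearing in \eqref{Main inequality equation 1} into controllable data, thereby obtaining a bound on $h_2'\sum_k\sum_i\ti{g}^{i\ov{i}}(|\vp_{ik}|^2+|\vp_{i\ov{k}}|^2)$ in terms of $\ti{g}^{n\ov{n}}$, and then to exploit the lower bound $\ti{g}^{i\ov{i}}\geq c_A\,\ti{g}^{n\ov{n}}$ available precisely for indices $i\notin I$ in order to cancel the $\ti{g}^{n\ov{n}}$ factor and conclude.

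First, by \eqref{Elementary formulas} and the fact that $B_{\alpha\beta}$ has constant components, $(\lambda_1)_i|_{x_0}=\vp_{V_1V_1i}$, so the critical point identity reads
\[
\frac{\vp_{V_1V_1i}}{\lambda_1}=-h_1'(|\ti{\omega}|_g^2)_i-h_2'(|\de\vp|_g^2)_i+Ae^{-A\vp}\vp_i.
\]
Applying Cauchy--Schwarz with equal weights $1/3$ and the identity $h_\ell''=3(h_\ell')^2$ from \eqref{Property 1 of h} yields
\[
\frac{\ti{g}^{i\ov{i}}|\vp_{V_1V_1i}|^2}{\lambda_1^2}\leq h_1''\,\ti{g}^{i\ov{i}}|(|\ti{\omega}|_g^2)_i|^2+h_2''\,\ti{g}^{i\ov{i}}|(|\de\vp|_g^2)_i|^2+3A^2 e^{-2A\vp}\,\ti{g}^{i\ov{i}}|\vp_i|^2.
\]
Substituting this into \eqref{Main inequality equation 1}, the two $h_\ell''$-terms cancel exactly against their positive counterparts, leaving an error $A^2(e^{-A\vp}-3e^{-2A\vp})\,\ti{g}^{i\ov{i}}|\vp_i|^2$; by Propositions \ref{Zero order estimate} and \ref{First order estimate} this is bounded below by $-C_A\sum_i\ti{g}^{i\ov{i}}$. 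Discarding the remaining non-negative third-order terms produces
\[
h_2'\sum_k\sum_i\ti{g}^{i\ov{i}}(|\vp_{ik}|^2+|\vp_{i\ov{k}}|^2)+(Ae^{-A\vp}-C-C_A)\sum_i\ti{g}^{i\ov{i}}\leq C_A.
\]

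Regardless of the sign of $(Ae^{-A\vp}-C-C_A)$, rearranging and using $h_2'\geq 1/C$ from \eqref{Property 2 of h} gives $\sum_k\sum_i\ti{g}^{i\ov{i}}(|\vp_{ik}|^2+|\vp_{i\ov{k}}|^2)\leq C_A(1+\sum_i\ti{g}^{i\ov{i}})$. The scaling observation is now that $\sum_i\ti{g}^{i\ov{i}}\leq n\,\ti{g}^{n\ov{n}}$ by \eqref{tilde g diagonal}, while $\prod_i\ti{g}_{i\ov{i}}=\det\ti{g}=f\leq C$ forces $\ti{g}_{n\ov{n}}\leq C^{1/n}$ and hence $\ti{g}^{n\ov{n}}\geq c>0$ uniformly, so the right-hand side is $\leq C_A\,\ti{g}^{n\ov{n}}$. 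Finally, for $i\notin I$ the definition of $I$ gives $\ti{g}_{i\ov{i}}<A^3 e^{-2A\vp}\ti{g}_{n\ov{n}}$, equivalently $\ti{g}^{i\ov{i}}\geq c_A\,\ti{g}^{n\ov{n}}$ for some $c_A>0$. Combining,
\[
c_A\,\ti{g}^{n\ov{n}}\sum_k\sum_{i\notin I}(|\vp_{ik}|^2+|\vp_{i\ov{k}}|^2)\leq \sum_k\sum_{i\notin I}\ti{g}^{i\ov{i}}(|\vp_{ik}|^2+|\vp_{i\ov{k}}|^2)\leq C_A\,\ti{g}^{n\ov{n}},
\]
and cancelling $\ti{g}^{n\ov{n}}$ delivers the claim.

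The main obstacle is the mismatch between the positive coefficient $A^2 e^{-A\vp}$ in the penultimate line of \eqref{Main inequality equation 1} and the factor $3A^2 e^{-2A\vp}$ produced by Cauchy--Schwarz: since $\vp\leq 0$ implies $e^{-2A\vp}\geq e^{-A\vp}$, this discrepancy cannot be absorbed by enlarging $A$. The argument sidesteps this because $C_A$ is permitted to depend on $A$, and the crucial structural point is the scaling identity $\sum_i\ti{g}^{i\ov{i}}\leq n\,\ti{g}^{n\ov{n}}$: the potentially unbounded factor $\ti{g}^{n\ov{n}}$ then appears on both sides of the final chain and divides out cleanly, leaving a purely $A$-dependent bound.
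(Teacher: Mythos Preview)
Your argument is correct and follows essentially the same route as the paper: use $\hat{Q}_i(x_0)=0$ together with Cauchy--Schwarz and the identity $h_\ell''=3(h_\ell')^2$ to cancel the third-order term $\ti{g}^{i\ov{i}}|\vp_{V_1V_1i}|^2/\lambda_1^2$ against the $h_\ell''$-terms in \eqref{Main inequality equation 1}, then use $\sum_i\ti{g}^{i\ov{i}}\leq n\,\ti{g}^{n\ov{n}}$ and the lower bound $\ti{g}^{i\ov{i}}\geq c_A\,\ti{g}^{n\ov{n}}$ for $i\notin I$ to cancel $\ti{g}^{n\ov{n}}$. The only cosmetic difference is that the paper absorbs the free constant $Ane^{-A\vp}$ into $\sum_i\ti{g}^{i\ov{i}}$ via \eqref{Arithmetic-geometric mean inequality} at the outset, whereas you absorb the ``$+1$'' into $\ti{g}^{n\ov{n}}$ at the end via $\ti{g}_{n\ov{n}}^n\leq f\leq C$; these are the same inequality in disguise.
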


\begin{proof}
Using $\hat{Q}_{i}(x_{0})=0$ and the Cauchy-Schwarz inequality, for each $i=1,2,\cdots,n$, it is clear that
\begin{equation}\label{Partial second order estimate equation 1}
\begin{split}
\frac{\ti{g}^{i\ov{i}}|\vp_{V_{1}V_{1}i}|^{2}}{\lambda_{1}^{2}}
  =  {} & \ti{g}^{i\ov{i}}\left|h_{1}'(|\ti{\omega}|_{g}^{2})_{i}+h_{2}'(|\de\vp|_{g}^{2})_{i}-Ae^{-A\vp}\vp_{i}\right|^{2} \\
\leq {} & 3(h_{1}')^{2}\ti{g}^{i\ov{i}}|(|\ti{\omega}|_{g}^{2})_{i}|^{2}+3(h_{2}')^{2}\ti{g}^{i\ov{i}}|(|\de\vp|_{g}^{2})_{i}|^{2} \\[2mm]
        & +3A^{2}e^{-2A\vp}\ti{g}^{i\ov{i}}|\vp_{i}|^{2}.
\end{split}
\end{equation}
Combining this with Lemma \ref{Main inequality} and discarding some positive terms, we obtain
\begin{equation*}
\begin{split}
0 \geq {} & h_{2}'\sum_{k}\ti{g}^{i\ov{i}}\left(|\vp_{ik}|^{2}+|\vp_{i\ov{k}}|^{2}\right)
            +\left(h_{1}''-3(h_{1}')^{2}\right)\sum_{i}\ti{g}^{i\ov{i}}|(|\ti{\omega}|_{g}^{2})_{i}|^{2} \\
          & +\left(h_{2}''-3(h_{2}')^{2}\right)\sum_{i}\ti{g}^{i\ov{i}}|(|\de\vp|_{g}^{2})_{i}|^{2}-CA^{2}e^{-2A\vp}\sum_{i}\ti{g}^{i\ov{i}},
\end{split}
\end{equation*}
where we used $\sum_{i}\ti{g}^{i\ov{i}}\geq C^{-1}$ (see (\ref{Arithmetic-geometric mean inequality})). Using (\ref{Property 1 of h}), (\ref{Property 2 of h}), (\ref{tilde g diagonal}) and the definition of $I$, it is clear that
\begin{equation*}
\begin{split}
0 \geq {} & C^{-1}\sum_{k}\ti{g}^{i\ov{i}}\left(|\vp_{ik}|^{2}+|\vp_{i\ov{k}}|^{2}\right)-CA^{2}e^{-2A\vp}\sum_{i}\ti{g}^{i\ov{i}} \\
  \geq {} & C^{-1}\sum_{k}\sum_{i\notin I}\ti{g}^{i\ov{i}}\left(|\vp_{ik}|^{2}+|\vp_{i\ov{k}}|^{2}\right)-CA^{2}e^{-2A\vp}\ti{g}^{n\ov{n}} \\
  \geq {} & C^{-1}A^{-3}e^{2A\vp}\ti{g}^{n\ov{n}}\sum_{k}\sum_{i\notin I}\left(|\vp_{ik}|^{2}+|\vp_{i\ov{k}}|^{2}\right)
            -CA^{2}e^{-2A\vp}\ti{g}^{n\ov{n}},
\end{split}
\end{equation*}
as desired.
\end{proof}

Clearly, if $I=\emptyset$, then Proposition \ref{Second order estimate} follows from Lemma \ref{Partial second order estimate}. Hence, we assume $I\neq\emptyset$ in the following argument.

\subsection*{$\bullet$ Third order terms}
The key point is to deal with the "bad" third order term
\begin{equation}\label{Bad term K}
K := \frac{\ti{g}^{i\ov{i}}|\vp_{V_{1}V_{1}i}|^{2}}{\lambda_{1}^{2}}.
\end{equation}
For any $\ve\in(0,\frac{1}{3})$, we decompose the term $K$ into three parts as follows:
\begin{equation*}
\begin{split}
K
= {} & \sum_{i\in I}\frac{\ti{g}^{i\ov{i}}|\vp_{V_{1}V_{1}i}|^{2}}{\lambda_{1}^{2}}
+2\ve\sum_{i\notin I}\frac{\ti{g}^{i\ov{i}}|\vp_{V_{1}V_{1}i}|^{2}}{\lambda_{1}^{2}}
+(1-2\ve)\sum_{i\notin I}\frac{\ti{g}^{i\ov{i}}|\vp_{V_{1}V_{1}i}|^{2}}{\lambda_{1}^{2}} \\
=: {} & K_{1}+K_{2}+K_{3}.
\end{split}
\end{equation*}

\begin{lemma}\label{Bad term 1 and 2}
At $x_{0}$, we have
\begin{equation}\label{Bad term 1 and 2 equation 1}
\begin{split}
K_{1}+K_{2}
\leq {} & 3(h_{1}')^{2}\ti{g}^{i\ov{i}}|(|\ti{\omega}|_{g}^{2})_{i}|^{2}
          +3(h_{2}')^{2}\ti{g}^{i\ov{i}}|(|\de\vp|_{g}^{2})_{i}|^{2} \\[3mm]
        & +6\ve A^{2}e^{-2A\vp}\ti{g}^{i\ov{i}}|\vp_{i}|^{2}+C\sum_{i}\ti{g}^{i\ov{i}}.
\end{split}
\end{equation}
\end{lemma}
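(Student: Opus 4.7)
The plan is to apply the critical point condition $\hat{Q}_i(x_0) = 0$ term-by-term inside both $K_1$ and $K_2$. Solving this condition for $\vp_{V_1 V_1 i}/\lambda_1$ and applying $|a+b+c|^2 \leq 3(|a|^2+|b|^2+|c|^2)$ produces, for each single index $i$, the pointwise bound
\begin{equation*}
\frac{\ti{g}^{i\ov{i}}|\vp_{V_{1}V_{1}i}|^{2}}{\lambda_{1}^{2}} \leq 3(h_{1}')^{2}\ti{g}^{i\ov{i}}|(|\ti{\omega}|_{g}^{2})_{i}|^{2}+3(h_{2}')^{2}\ti{g}^{i\ov{i}}|(|\de\vp|_{g}^{2})_{i}|^{2}+3A^{2}e^{-2A\vp}\ti{g}^{i\ov{i}}|\vp_{i}|^{2}
\end{equation*}
already recorded as (\ref{Partial second order estimate equation 1}). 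This is the only non-trivial input.

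For $K_2 = 2\varepsilon\sum_{i\notin I}\ti{g}^{i\ov{i}}|\vp_{V_1 V_1 i}|^{2}/\lambda_1^{2}$, I would simply multiply the pointwise bound by $2\varepsilon$ and sum over $i\notin I$, which produces the coefficients $6\varepsilon(h_k')^{2}$ and the sought $6\varepsilon A^{2}e^{-2A\vp}$ on $\sum_{i\notin I}\ti{g}^{i\ov{i}}|\vp_i|^{2}$ (which is bounded by the full sum over $i$). Since $\varepsilon < 1/3$ gives $6\varepsilon(h_k')^{2} < 3(h_k')^{2}$, the $(h_k')^{2}$-type pieces coming from $K_1$ and $K_2$ can be combined over the complementary index sets $I$ and $I^c$ at the very end, producing a full sum over all $i$ with coefficient at most $3(h_k')^{2}$, exactly as claimed.

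The main obstacle is the $K_1$ term. Direct application of the pointwise bound produces $3A^{2}e^{-2A\vp}\sum_{i\in I}\ti{g}^{i\ov{i}}|\vp_i|^{2}$, which carries no $\varepsilon$-weight and therefore cannot be absorbed into the $6\varepsilon A^{2}e^{-2A\vp}$ term on the right. This is precisely what the definition of $I$ is engineered to cure: for $i\in I$ the inequality $\ti{g}_{i\ov{i}} \geq A^{3}e^{-2A\vp}\ti{g}_{n\ov{n}}$ rearranges to $\ti{g}^{i\ov{i}} \leq A^{-3}e^{2A\vp}\ti{g}^{n\ov{n}}$, and combined with Proposition \ref{First order estimate}, which gives $|\vp_i|^{2}\leq C$, it yields
\begin{equation*}
3A^{2}e^{-2A\vp}\sum_{i\in I}\ti{g}^{i\ov{i}}|\vp_{i}|^{2} \leq 3CnA^{-1}\ti{g}^{n\ov{n}} \leq C\sum_{i}\ti{g}^{i\ov{i}},
\end{equation*}
with a uniform constant $C$ independent of $A$. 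The $(h_k')^{2}$ contributions from $K_1$ (with coefficient $3$ on the index set $I$) and from $K_2$ (with coefficient $6\varepsilon<3$ on $I^{c}$) then combine into the full sum with coefficient at most $3(h_k')^{2}$, and the three estimates together give (\ref{Bad term 1 and 2 equation 1}).
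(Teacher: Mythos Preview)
Your proof is correct and follows essentially the same route as the paper: apply the pointwise bound (\ref{Partial second order estimate equation 1}) coming from $\hat{Q}_i(x_0)=0$, use the definition of $I$ together with the first order estimate and $A>1$ to absorb the $3A^{2}e^{-2A\vp}\sum_{i\in I}\ti{g}^{i\ov{i}}|\vp_i|^{2}$ term into $C\sum_i\ti{g}^{i\ov{i}}$, and then use $\ve<1/3$ to merge the $(h_k')^2$ pieces from the disjoint index sets $I$ and $I^c$ into a single full sum with coefficient $3(h_k')^2$.
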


\begin{proof}
Using (\ref{Partial second order estimate equation 1}) and the definition of $I$, we obtain
\begin{equation}\label{Bad term 1 and 2 equation 2}
\begin{split}
\sum_{i\in I}\frac{\ti{g}^{i\ov{i}}|\vp_{V_{1}V_{1}i}|^{2}}{\lambda_{1}^{2}}
\leq {} & 3(h_{1}')^{2}\sum_{i\in I}\ti{g}^{i\ov{i}}|(|\ti{\omega}|_{g}^{2})_{i}|^{2}
              +3(h_{2}')^{2}\sum_{i\in I}\ti{g}^{i\ov{i}}|(|\de\vp|_{g}^{2})_{i}|^{2} \\
        & +3A^{2}e^{-2A\vp}\sum_{i\in I}\ti{g}^{i\ov{i}}|\vp_{i}|^{2} \\
\leq {} & 3(h_{1}')^{2}\sum_{i\in I}\ti{g}^{i\ov{i}}|(|\ti{\omega}|_{g}^{2})_{i}|^{2}
              +3(h_{2}')^{2}\sum_{i\in I}\ti{g}^{i\ov{i}}|(|\de\vp|_{g}^{2})_{i}|^{2} \\
        & +\frac{3n\sup_{M}|\de\vp|_{g}^{2}}{A}\ti{g}^{n\ov{n}}.
\end{split}
\end{equation}
By the similar calculation, it is clear that
\begin{equation}\label{Bad term 1 and 2 equation 3}
\begin{split}
2\ve\sum_{i\notin I}\frac{\ti{g}^{i\ov{i}}|\vp_{V_{1}V_{1}i}|^{2}}{\lambda_{1}^{2}}
\leq {} & 6\ve(h_{1}')^{2}\sum_{i\notin I}\ti{g}^{i\ov{i}}|(|\ti{\omega}|_{g}^{2})_{i}|^{2}
              +6\ve(h_{2}')^{2}\sum_{i\notin I}\ti{g}^{i\ov{i}}|(|\de\vp|_{g}^{2})_{i}|^{2} \\
        & +6\ve A^{2}e^{-2A\vp}\sum_{i\notin I}\ti{g}^{i\ov{i}}|\vp_{i}|^{2}.
\end{split}
\end{equation}
Combining (\ref{Bad term 1 and 2 equation 2}), (\ref{Bad term 1 and 2 equation 3}), $\ve\in(0,\frac{1}{3})$ and $A>1$, we obtain (\ref{Bad term 1 and 2 equation 1}).
\end{proof}

In order to deal with the term $K_{3}$, we define a local $(1,0)$ vector field by
\begin{equation}\label{Definition of tilde e 1}
\ti{e}_{1} = \frac{1}{\sqrt{2}}(V_{1}-\sqrt{-1}JV_{1}).
\end{equation}
At $x_{0}$, since $|\ti{e}_{1}|_{g}=|JV_{1}|_{g}=1$, we write
\begin{equation}\label{Expression of tilde e 1}
\ti{e}_{1} = \sum_{q}\nu_{q}e_{q}, \quad \sum_{q}|\nu_{q}|^{2}=1
\end{equation}
and
\begin{equation}\label{Expression of JV 1}
JV_{1} = \sum_{\alpha>1}\mu_{\alpha}V_{\alpha}, \quad \sum_{\alpha>1}\mu_{\alpha}^{2}=1,
\end{equation}
where we used the vector $JV_{1}$ is $g$-orthogonal to $V_{1}$.

\begin{lemma}\label{Bad term 3}
At $x_{0}$, if $\lambda_{1}\geq\frac{C_{A}}{\ve^{3}}$ for a uniform constant $C_{A}$ depending on $A$, then we have
\begin{equation*}
\begin{split}
K_{3}
\leq {} & 2\sum_{\alpha>1}\frac{\ti{g}^{i\ov{i}}|\vp_{V_{\alpha}V_{1}i}|^{2}}{\lambda_{1}(\lambda_{1}-\lambda_{\alpha})}
          +\sum_{p\neq q}\frac{\ti{g}^{p\ov{p}}\ti{g}^{q\ov{q}}|(\ti{g}_{p\ov{q}})_{V_{1}}|^{2}}{\lambda_{1}}
          +2h_{1}'\sum_{k,l}\ti{g}^{i\ov{i}}|(\ti{g}_{k\ov{l}})_{i}|^{2}+\frac{C}{\ve}\sum_{i}\ti{g}^{i\ov{i}}.
\end{split}
\end{equation*}
\end{lemma}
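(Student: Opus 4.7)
The plan is to decompose the bad third derivative $\vp_{V_1 V_1 i}$ into pieces that can each be matched against one of the three good quantities on the right-hand side of (\ref{Bad term 3}), with residual errors absorbed into $\tfrac{C}{\varepsilon}\sum_i \tilde{g}^{i\bar{i}}$. The pivotal identity is a direct consequence of (\ref{Definition of tilde e 1}): regarded as symmetric two-tensors, $V_1 \otimes V_1 + JV_1 \otimes JV_1 = 2\,\tilde{e}_1 \otimes \bar{\tilde{e}}_1$. Applied to $\nabla^3 \vp(\cdot,\cdot,e_i)$ at $x_0$, and using (\ref{Commutation formula}) together with Proposition \ref{First order estimate} to bound the commutator residues by $O(1)$, this yields
\[
\vp_{V_1 V_1 i} \;=\; 2\vp_{\tilde{e}_1 \bar{\tilde{e}}_1 i} \;-\; \vp_{JV_1 JV_1 i} \;+\; O(1).
\]

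For the mixed piece, expand $\tilde{e}_1 = \sum_q \nu_q e_q$ via (\ref{Expression of tilde e 1}) to get $\vp_{\tilde{e}_1 \bar{\tilde{e}}_1 i} = \sum_{p,q}\nu_p \bar{\nu}_q \vp_{p\bar{q}i}$, then differentiate (\ref{Expression of tilde g}) in the $e_i$-direction to obtain
\[
\vp_{p\bar{q}i} \;=\; (\tilde{g}_{p\bar{q}})_i \;-\; S^{r}_{p\bar{q}}\vp_{ri} \;-\; S^{\bar{r}}_{p\bar{q}}\vp_{\bar{r}i} \;+\; O(1),
\]
where Proposition \ref{First order estimate} handles the $(\nabla S)*\nabla\vp$ contributions and $|S*\nabla^2\vp|\leq C\lambda_1$. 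Squaring via $(a+b)^2\leq(1+\varepsilon)a^2+(1+\varepsilon^{-1})b^2$, multiplying by $\tilde{g}^{i\bar{i}}/\lambda_1^2$, and summing over $i\notin I$, the leading $(\tilde{g}_{p\bar{q}})_i$-part is absorbed into the good term $2h_1'\sum_{k,l}\tilde{g}^{i\bar{i}}|(\tilde{g}_{k\bar{l}})_i|^2$ using $h_1' \asymp 1/M_R^2 \asymp 1/\lambda_1^2$ (by (\ref{Property 2 of h}) and (\ref{Second order estimate equation 4})); the tail of size $(1+\varepsilon^{-1})\lambda_1^2 \cdot \tilde{g}^{i\bar{i}}/\lambda_1^2 \sim \varepsilon^{-1}\tilde{g}^{i\bar{i}}$ slots into $\tfrac{C}{\varepsilon}\sum\tilde{g}^{i\bar{i}}$.

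The harder piece is $\vp_{JV_1 JV_1 i}$. Substituting (\ref{Expression of JV 1}) twice gives $\vp_{JV_1 JV_1 i} = \sum_{\alpha,\beta>1}\mu_\alpha\mu_\beta\vp_{V_\alpha V_\beta i}$, and each entry with $\alpha,\beta>1$ must be reduced to (a) the first-good-term quantities $\vp_{V_\alpha V_1 i}$ entering $2\sum_{\alpha>1}\tilde{g}^{i\bar{i}}|\vp_{V_\alpha V_1 i}|^2/[\lambda_1(\lambda_1-\lambda_\alpha)]$ and (b) off-diagonal derivatives $(\tilde{g}_{p\bar{q}})_{V_1}$, $p\neq q$, entering the second good term $\sum_{p\neq q}\tilde{g}^{p\bar{p}}\tilde{g}^{q\bar{q}}|(\tilde{g}_{p\bar{q}})_{V_1}|^2/\lambda_1$. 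Step (a) is implemented by polarisation $2\vp_{V_\alpha V_\beta i} = \vp_{(V_\alpha+V_\beta)(V_\alpha+V_\beta)i} - \vp_{V_\alpha V_\alpha i} - \vp_{V_\beta V_\beta i}$ followed by reapplying the symmetric-tensor identity from the first paragraph to each of the real vectors $V_\alpha$, $V_\beta$, $V_\alpha+V_\beta$, and then weighted Cauchy--Schwarz using $\sum_\alpha \mu_\alpha^2 = 1$ with $\lambda_1-\lambda_\alpha\leq\lambda_1$. Step (b) exploits that upon commuting $\nabla_{e_i}$ past the $V_\alpha V_\beta$-derivatives via (\ref{Commutation formula}) and applying (\ref{Expression of tilde g}), the off-diagonal contributions $\vp_{p\bar{q}V_1}$ with $p\neq q$ convert to $(\tilde{g}_{p\bar{q}})_{V_1}$, carrying the required $\tilde{g}^{p\bar{p}}\tilde{g}^{q\bar{q}}$ weights because for $i\notin I$ one has $\tilde{g}^{i\bar{i}}\tilde{g}^{1\bar{1}}/\lambda_1 \gtrsim \tilde{g}^{i\bar{i}}/\lambda_1^2$.

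The main obstacle will be the bookkeeping of constants through this chain of identities and Cauchy--Schwarz applications: the total coefficient of each good term in the final bound of $K_3$ must not exceed the constants appearing in (\ref{Bad term 3}), and every curvature commutator or $O(1+\lambda_1)$-type residual error --- arising from (\ref{Commutation formula}), from the $S*\nabla^2\vp$ tails in (\ref{Expression of tilde g}), and from replacing denominators $\lambda_1-\lambda_\alpha$ by $\lambda_1$ --- must be absorbable into $\tfrac{C}{\varepsilon}\sum \tilde{g}^{i\bar{i}}$. This is precisely where the hypothesis $\lambda_1 \geq C_A/\varepsilon^3$ enters: it renders each relative error of scale $1/\lambda_1$ bounded by $\varepsilon^3/C_A$, small enough to close the argument once the absolute constants $C_A$ are chosen sufficiently large.
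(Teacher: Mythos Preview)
Your decomposition $\vp_{V_1V_1i}=2\vp_{\tilde e_1\bar{\tilde e}_1 i}-\vp_{JV_1JV_1i}$ replaces \emph{both} copies of $V_1$ by $(1,0)$/$(0,1)$ vectors and thereby destroys exactly the structure that the three good terms require. The first two good quantities carry an explicit $V_1$: one wants terms $\vp_{V_\alpha V_1 i}$ and $(\tilde g_{p\bar q})_{V_1}$. Your first piece $2\vp_{\tilde e_1\bar{\tilde e}_1 i}$ produces only $(\tilde g_{p\bar q})_i$, which must then be pushed into $2h_1'\sum\tilde g^{i\bar i}|(\tilde g_{k\bar l})_i|^2$; but $h_1'\le \tfrac{1}{15M_R^2}\le \tfrac{1}{15\lambda_1^2}$, while your coefficient is of order $4/\lambda_1^2$, so the absorption fails by a fixed numerical factor that no choice of $\lambda_1\ge C_A/\ve^3$ can repair. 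For the second piece, the polarisation of $\vp_{V_\alpha V_\beta i}$ with $\alpha,\beta>1$ never reintroduces $V_1$: iterating the same symmetric-tensor identity with $V_\alpha$ in place of $V_1$ yields terms built from $\tilde e_\alpha,\bar{\tilde e}_\alpha,JV_\alpha$, none of which equals $V_1$, so the scheme does not terminate at $\vp_{V_\alpha V_1 i}$.

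The paper's device is to substitute in only \emph{one} slot, writing $V_1=\sqrt{2}\,\bar{\tilde e}_1-\sqrt{-1}\,JV_1$ so that
\[
\vp_{V_1V_1i}=\sqrt{2}\sum_q\bar\nu_q(\tilde g_{i\bar q})_{V_1}-\sqrt{-1}\sum_{\alpha>1}\mu_\alpha\vp_{V_\alpha V_1 i}+E,
\]
which already matches the first two good terms. Two further ingredients, both absent from your outline, are essential: (i) the eigenvector equation $\lambda_1 V_1^\alpha=\sum_\beta\vp_{\alpha\beta}V_1^\beta$ together with Lemma~\ref{Partial second order estimate} forces $|\nu_q|\le C_A/\lambda_1$ for $q\notin I$, making the $q\notin I$ part of the $V_1$--sum genuinely small; and (ii) a dichotomy on whether $\tfrac12(\lambda_1+\sum_{\alpha>1}\lambda_\alpha\mu_\alpha^2)\ge(1-\ve)\tilde g_{\tilde 1\bar{\tilde 1}}$, with a carefully tuned Cauchy--Schwarz parameter $\gamma$ in each case, is what actually pins the constants to $2$ and $1$ in front of the first two good terms. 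This case split is precisely where the hypothesis $\lambda_1\ge C_A/\ve^3$ is consumed (via $\tilde g_{\tilde 1\bar{\tilde 1}}\le C/\ve$ in Case~2 and $\gamma=\ve^{-2}$), not merely to damp $O(1/\lambda_1)$ residues as you suggest.
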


\begin{proof}
By (\ref{Definition of tilde e 1}), (\ref{Commutation formula}) and (\ref{Expression of tilde g}), we compute
\begin{equation}\label{Bad term 3 equation 1}
\begin{split}
\vp_{V_{1}V_{1}i}
= {} & \sqrt{2}\vp_{V_{1}\ov{\ti{e}}_{1}i}-\sqrt{-1}\vp_{V_{1}JV_{1}i} \\[2mm]
= {} & \sqrt{2}\sum_{q}\ov{\nu_{q}}\vp_{V_{1}\ov{q}i}-\sqrt{-1}\sum_{\alpha>1}\mu_{\alpha}\vp_{V_{1}V_{\alpha}i} \\
= {} & \sqrt{2}\sum_{q}\ov{\nu_{q}}\vp_{i\ov{q}V_{1}}-\sqrt{-1}\sum_{\alpha>1}\mu_{\alpha}\vp_{V_{\alpha}V_{1}i}+E \\
= {} & \sqrt{2}\sum_{q}\ov{\nu_{q}}(\ti{g}_{i\ov{q}})_{V_{1}}-\sqrt{-1}\sum_{\alpha>1}\mu_{\alpha}\vp_{V_{\alpha}V_{1}i}+E \\
= {} & \sqrt{2}\sum_{q\notin I}\ov{\nu_{q}}(\ti{g}_{i\ov{q}})_{V_{1}}+\sqrt{2}\sum_{q\in I}\ov{\nu_{q}}(\ti{g}_{i\ov{q}})_{V_{1}}
       -\sqrt{-1}\sum_{\alpha>1}\mu_{\alpha}\vp_{V_{\alpha}V_{1}i}+E,
\end{split}
\end{equation}
where $E$ denotes a term satisfying $|E|\leq C\lambda_{1}$. Combining (\ref{Bad term 3 equation 1}) with the Cauchy-Schwarz inequality, we compute
\begin{equation}\label{Bad term 3 equation 2}
\begin{split}
K_{3}
\leq {} & \frac{C}{\ve}\sum_{i\notin I}\frac{\ti{g}^{i\ov{i}}}{\lambda_{1}^{2}}
          \left|\sum_{q\notin I}\ov{\nu_{q}}(\ti{g}_{i\ov{q}})_{V_{1}}\right|^{2}
          +\frac{C}{\ve}\sum_{i}\ti{g}^{i\ov{i}} \\
        & +(1-\ve)\sum_{i\notin I}\frac{\ti{g}^{i\ov{i}}|\sqrt{2}\sum_{q\in I}\ov{\nu_{q}}(\ti{g}_{i\ov{q}})_{V_{1}}
          -\sqrt{-1}\sum_{\alpha>1}\mu_{\alpha}\vp_{V_{\alpha}V_{1}i}|^{2}}{\lambda_{1}^{2}}.
\end{split}
\end{equation}
For convenience, we write $I=\{1,2,\cdots,j\}$. Combining (\ref{Definition of e i}) and Lemma \ref{Partial second order estimate}, it is clear that
\begin{equation*}
\sum_{\alpha=2j+1}^{2n}\sum_{\beta=1}^{2n}|\vp_{\alpha\beta}| \leq C_{A}.
\end{equation*}
Since $V_{1}$ is the eigenvector of $\nabla^{2}\vp$ corresponding to $\lambda_{1}$, we have
\begin{equation*}
|V_{1}^{\alpha}| = \left|\frac{1}{\lambda_{1}}\sum_{\beta=1}^{2n}\vp_{\alpha\beta}V_{1}^{\beta}\right|
\leq \frac{C_{A}}{\lambda_{1}} ~\text{~for $\alpha=2j+1,\cdots,2n$}.
\end{equation*}
Recalling the definitions of $\nu_{q}$ (see (\ref{Expression of tilde e 1})) and $e_{i}$ (see (\ref{Definition of e i})), we obtain
\begin{equation}\label{Bad term 3 equation 5}
|\nu_{q}| \leq |V_{1}^{2q-1}|+|V_{1}^{2q}| \leq \frac{C_{A}}{\lambda_{1}} ~\text{~for $q\notin I$}.
\end{equation}

For the first term of (\ref{Bad term 3 equation 2}), by (\ref{Bad term 3 equation 5}), we compute
\begin{equation}\label{Bad term 3 equation 3}
\begin{split}
& \frac{C}{\ve}\sum_{i\notin I}\frac{\ti{g}^{i\ov{i}}}{\lambda_{1}^{2}}
\left|\sum_{q\notin I}\ov{\nu_{q}}(\ti{g}_{i\ov{q}})_{V_{1}}\right|^{2} \\
\leq {} & \frac{C_{A}}{\ve}\sum_{i\notin I}\sum_{q\notin I}\frac{\ti{g}^{i\ov{i}}|(\ti{g}_{i\ov{q}})_{V_{1}}|^{2}}{\lambda_{1}^{4}} \\
\leq {} & \frac{C_{A}}{\ve}\sum_{i\notin I}\sum_{q\notin I,q\neq i}\frac{\ti{g}^{i\ov{i}}|(\ti{g}_{i\ov{q}})_{V_{1}}|^{2}}{\lambda_{1}^{4}}
          +\frac{C_{A}}{\ve}\sum_{i\notin I}\frac{\ti{g}^{i\ov{i}}|(\ti{g}_{i\ov{i}})_{V_{1}}|^{2}}{\lambda_{1}^{4}}.
\end{split}
\end{equation}
Using (\ref{Expression of tilde g}), we obtain $\ti{g}_{q\ov{q}}\leq C\lambda_{1}$ for any $q$. Hence, if $\lambda_{1}\geq\frac{C_{A}}{\ve}$, then we have
\begin{equation*}
\frac{C_{A}}{\ve\lambda_{1}^{3}} \leq \ti{g}^{q\ov{q}},
\end{equation*}
which implies
\begin{equation}\label{Bad term 3 equation 6}
\frac{C_{A}}{\ve}\sum_{i\notin I}\sum_{q\notin I,q\neq i}\frac{\ti{g}^{i\ov{i}}|(\ti{g}_{i\ov{q}})_{V_{1}}|^{2}}{\lambda_{1}^{4}}
\leq \sum_{i\notin I}\sum_{q\notin I,q\neq i}\frac{\ti{g}^{i\ov{i}}\ti{g}^{q\ov{q}}|(\ti{g}_{i\ov{q}})_{V_{1}}|^{2}}{\lambda_{1}}.
\end{equation}
By (\ref{Expression of tilde g}) and (\ref{Commutation formula}), we see that
\begin{equation}\label{Bad term 3 equation 9}
\begin{split}
(\ti{g}_{i\ov{i}})_{k} = {} & \vp_{i\ov{i}k}+(S_{i\ov{i}}^{p}\vp_{p})_{k}+(S_{i\ov{i}}^{\ov{p}}\vp_{\ov{p}})_{k} \\
                       = {} & \vp_{k\ov{i}i}+E \\
                       = {} & (\ti{g}_{k\ov{i}})_{i}+E,
\end{split}
\end{equation}
where $E$ denotes a term satisfying $|E|\leq C\lambda_{1}$. Using (\ref{Definition of tilde e 1}) and (\ref{Expression of tilde e 1}), it is clear that
\begin{equation}\label{Bad term 3 equation 14}
V_{1} = \frac{1}{\sqrt{2}}(\ti{e}_{1}+\ov{\ti{e}}_{1}) = \frac{1}{\sqrt{2}}\sum_{k}(\nu_{k}e_{k}+\ov{\nu_{k}}\,\ov{e}_{k}).
\end{equation}
Combining $\lambda_{1}\geq\frac{C_{A}}{\ve}$, (\ref{Property 2 of h}) and (\ref{Second order estimate equation 4}), we have
\begin{equation}\label{Bad term 3 equation 15}
\frac{C_{A}}{\ve\lambda_{1}^{4}} \leq \frac{1}{15M_{R}^{2}} \leq 2h_{1}'
~\text{~and~}~ \frac{C_{A}}{\ve\lambda_{1}^{2}} \leq 1.
\end{equation}
From (\ref{Bad term 3 equation 9}), (\ref{Bad term 3 equation 14}) and (\ref{Bad term 3 equation 15}), it follows that
\begin{equation}\label{Bad term 3 equation 7}
\begin{split}
\frac{C_{A}}{\ve}\sum_{i\notin I}\frac{\ti{g}^{i\ov{i}}|(\ti{g}_{i\ov{i}})_{V_{1}}|^{2}}{\lambda_{1}^{4}}
  =  {} & \frac{C_{A}}{\ve\lambda_{1}^{4}}\sum_{i\notin I}\ti{g}^{i\ov{i}}
          \left|\sum_{k}\frac{\nu_{k}(\ti{g}_{i\ov{i}})_{k}+\ov{\nu_{k}}(\ti{g}_{i\ov{i}})_{\ov{k}}}{\sqrt{2}}\right|^{2} \\
\leq {} &  \frac{C_{A}}{\ve\lambda_{1}^{4}}\sum_{i,k}\ti{g}^{i\ov{i}}|(\ti{g}_{i\ov{i}})_{k}|^{2} \\
\leq {} & \frac{C_{A}}{\ve\lambda_{1}^{4}}\sum_{i,k}\ti{g}^{i\ov{i}}|(\ti{g}_{k\ov{i}})_{i}|^{2}
          +\frac{C_{A}}{\ve\lambda_{1}^{2}}\sum_{i}\ti{g}^{i\ov{i}} \\
\leq {} & 2h_{1}'\sum_{i,k,l}\ti{g}^{i\ov{i}}|(\ti{g}_{k\ov{l}})_{i}|^{2}+\sum_{i}\ti{g}^{i\ov{i}}.
\end{split}
\end{equation}
Substituting (\ref{Bad term 3 equation 6}) and (\ref{Bad term 3 equation 7}) into (\ref{Bad term 3 equation 3}), we obtain
\begin{equation}\label{Bad term 3 equation 8}
\begin{split}
        & \frac{C}{\ve}\sum_{i\notin I}\frac{\ti{g}^{i\ov{i}}}{\lambda_{1}^{2}}
          \left|\sum_{q\notin I}\ov{\nu_{q}}(\ti{g}_{i\ov{q}})_{V_{1}}\right|^{2} \\
\leq {} & \sum_{i\notin I}\sum_{q\notin I,q\neq i}\frac{\ti{g}^{i\ov{i}}\ti{g}^{q\ov{q}}|(\ti{g}_{i\ov{q}})_{V_{1}}|^{2}}{\lambda_{1}}
          +2h_{1}'\sum_{k,l}\ti{g}^{i\ov{i}}|(\ti{g}_{k\ov{l}})_{i}|^{2}+\sum_{i}\ti{g}^{i\ov{i}}.
\end{split}
\end{equation}

Next, we deal with the third term of (\ref{Bad term 3 equation 2}). For any $\gamma>0$, we have
\begin{equation}\label{Bad term 3 equation 10}
\begin{split}
& (1-\ve)\sum_{i\notin I}\frac{\ti{g}^{i\ov{i}}|\sqrt{2}\sum_{q\in I}\ov{\nu_{q}}(\ti{g}_{i\ov{q}})_{V_{1}}
  -\sqrt{-1}\sum_{\alpha>1}\mu_{\alpha}\vp_{V_{\alpha}V_{1}i}|^{2}}{\lambda_{1}^{2}} \\
\leq {} & (1-\ve)(1+\gamma)\sum_{i\notin I}\frac{2\ti{g}^{i\ov{i}}}{\lambda_{1}^{2}}
          \left|\sum_{q\in I}\ov{\nu_{q}}(\ti{g}_{i\ov{q}})_{V_{1}}\right|^{2} \\
        & +(1-\ve)\left(1+\frac{1}{\gamma}\right)\sum_{i\notin I}\frac{\ti{g}^{i\ov{i}}}{\lambda_{1}^{2}}
          \left|\sum_{\alpha>1}\mu_{\alpha}\vp_{V_{\alpha}V_{1}i}\right|^{2}.
\end{split}
\end{equation}
Using (\ref{Definition of tilde e 1}), (\ref{Expression of tilde e 1}) and the Cauchy-Schwarz inequality, we have
\begin{equation}\label{Bad term 3 equation 11}
\begin{split}
\sum_{i\notin I}\frac{2\ti{g}^{i\ov{i}}}{\lambda_{1}^{2}}
\left|\sum_{q\in I}\ov{\nu_{q}}(\ti{g}_{i\ov{q}})_{V_{1}}\right|^{2}
\leq {} & \sum_{i\notin I}\frac{2\ti{g}^{i\ov{i}}}{\lambda_{1}^{2}}
          \left(\sum_{q}|\nu_{q}|^{2}\ti{g}_{q\ov{q}}\right)\left(\sum_{q\in I}\ti{g}^{q\ov{q}}|(\ti{g}_{i\ov{q}})_{V_{1}}|^{2}\right) \\
  =  {} & \ti{g}(\ti{e}_{1},\ov{\ti{e}}_{1})\sum_{i\notin I}\sum_{q\in I}
          \frac{2\ti{g}^{i\ov{i}}\ti{g}^{q\ov{q}}|(\ti{g}_{i\ov{q}})_{V_{1}}|^{2}}{\lambda_{1}^{2}}
\end{split}
\end{equation}
and
\begin{equation}\label{Bad term 3 equation 12}
\begin{split}
          \sum_{i\notin I}\frac{\ti{g}^{i\ov{i}}}{\lambda_{1}^{2}}
          \left|\sum_{\alpha>1}\mu_{\alpha}\vp_{V_{\alpha}V_{1}i}\right|^{2}
\leq {} & \sum_{i\notin I}\frac{\ti{g}^{i\ov{i}}}{\lambda_{1}^{2}}\left(\sum_{\alpha>1}(\lambda_{1}-\lambda_{\alpha})\mu_{\alpha}^{2}\right)
          \left(\sum_{\alpha>1}\frac{|\vp_{V_{\alpha}V_{1}i}|^{2}}{\lambda_{1}-\lambda_{\alpha}}\right) \\
\leq {} & \left(\lambda_{1}-\sum_{\alpha>1}\lambda_{\alpha}\mu_{\alpha}^{2}\right)\sum_{i\notin I}\sum_{\alpha>1}
          \frac{\ti{g}^{i\ov{i}}|\vp_{V_{\alpha}V_{1}i}|^{2}}{\lambda_{1}^{2}(\lambda_{1}-\lambda_{\alpha})},
\end{split}
\end{equation}
where we used $\sum_{\alpha>1}\mu_{\alpha}^{2}=1$ (see (\ref{Expression of JV 1})) in the last inequality. For convenience, we denote $\ti{g}(\ti{e}_{1},\ov{\ti{e}}_{1})$ by $\ti{g}_{\ti{1}\ov{\ti{1}}}$. Substituting (\ref{Bad term 3 equation 11}) and (\ref{Bad term 3 equation 12}) into (\ref{Bad term 3 equation 10}), we have
\begin{equation}\label{Bad term 3 equation 13}
\begin{split}
&(1-\ve)\sum_{i\notin I}\frac{\ti{g}^{i\ov{i}}|\sqrt{2}\sum_{q\in I}\ov{\nu_{q}}(\ti{g}_{i\ov{q}})_{V_{1}}
  -\sqrt{-1}\sum_{\alpha>1}\mu_{\alpha}\vp_{V_{\alpha}V_{1}i}|^{2}}{\lambda_{1}^{2}} \\
\leq {} & (1-\ve)(1+\gamma)\ti{g}_{\ti{1}\ov{\ti{1}}}\sum_{i\notin I}\sum_{q\in I}
          \frac{2\ti{g}^{i\ov{i}}\ti{g}^{q\ov{q}}|(\ti{g}_{i\ov{q}})_{V_{1}}|^{2}}{\lambda_{1}^{2}} \\
        & +(1-\ve)\left(1+\frac{1}{\gamma}\right)\left(\lambda_{1}-\sum_{\alpha>1}\lambda_{\alpha}\mu_{\alpha}^{2}\right)
          \sum_{i\notin I}\sum_{\alpha>1}\frac{\ti{g}^{i\ov{i}}|\vp_{V_{\alpha}V_{1}i}|^{2}}{\lambda_{1}^{2}(\lambda_{1}-\lambda_{\alpha})}.
\end{split}
\end{equation}
Substituting (\ref{Bad term 3 equation 8}) and (\ref{Bad term 3 equation 13}) into (\ref{Bad term 3 equation 2}), it is clear that
\begin{equation}\label{Bad term 3 equation 4}
\begin{split}
K_{3}
\leq {} & (1-\ve)\left(1+\frac{1}{\gamma}\right)\left(\lambda_{1}-\sum_{\alpha>1}\lambda_{\alpha}\mu_{\alpha}^{2}\right)
          \sum_{i\notin I}\sum_{\alpha>1}\frac{\ti{g}^{i\ov{i}}|\vp_{V_{\alpha}V_{1}i}|^{2}}{\lambda_{1}^{2}(\lambda_{1}-\lambda_{\alpha})} \\
         & +(1-\ve)(1+\gamma)\ti{g}_{\ti{1}\ov{\ti{1}}}\sum_{i\notin I}\sum_{q\in I}
          \frac{2\ti{g}^{i\ov{i}}\ti{g}^{q\ov{q}}|(\ti{g}_{i\ov{q}})_{V_{1}}|^{2}}{\lambda_{1}^{2}}
          +\frac{C}{\ve}\sum_{i}\ti{g}^{i\ov{i}} \\
        & +\sum_{i\notin I}\sum_{q\notin I,q\neq i}\frac{\ti{g}^{i\ov{i}}\ti{g}^{q\ov{q}}|(\ti{g}_{i\ov{q}})_{V_{1}}|^{2}}{\lambda_{1}}
          +2h_{1}'\sum_{k,l}\ti{g}^{i\ov{i}}|(\ti{g}_{k\ov{l}})_{i}|^{2}.
\end{split}
\end{equation}

Next, we give the proof of Lemma \ref{Bad term 3}. We split up into two cases. The constant $\gamma>0$ will be different in each case.

\bigskip
\noindent

{\bf Case 1.} At $x_{0}$, we assume that
\begin{equation}\label{Case 1 equation 1}
\frac{1}{2}\left(\lambda_{1}+\sum_{\alpha>1}\lambda_{\alpha}\mu_{\alpha}^{2}\right)
\geq (1-\ve)\ti{g}_{\ti{1}\ov{\ti{1}}} > 0.
\end{equation}

\bigskip

Since $\sum_{\alpha>1}\mu_{\alpha}^{2}=1$ (see (\ref{Expression of JV 1})), it is clear that
\begin{equation*}
\lambda_{1}-\sum_{\alpha>1}\lambda_{\alpha}\mu_{\alpha}^{2}
= \sum_{\alpha>1}(\lambda_{1}-\lambda_{\alpha})\mu_{\alpha}^{2}
> 0.
\end{equation*}
Combining this with (\ref{Case 1 equation 1}), we have
\begin{equation}\label{Case 1 equation 2}
\gamma := \frac{\lambda_{1}-\sum_{\alpha>1}\lambda_{\alpha}\mu_{\alpha}^{2}}{\lambda_{1}+\sum_{\alpha>1}\lambda_{\alpha}\mu_{\alpha}^{2}} > 0.
\end{equation}
Substituting (\ref{Case 1 equation 1}) and (\ref{Case 1 equation 2}) into (\ref{Bad term 3 equation 4}), we compute
\begin{equation*}
\begin{split}
K_{3}
\leq {} & 2(1-\ve)\sum_{i\notin I}\sum_{\alpha>1}\frac{\ti{g}^{i\ov{i}}|\vp_{V_{\alpha}V_{1}i}|^{2}}
          {\lambda_{1}(\lambda_{1}-\lambda_{\alpha})}
          +2\sum_{i\notin I}\sum_{q\in I}
          \frac{\ti{g}^{i\ov{i}}\ti{g}^{q\ov{q}}|(\ti{g}_{i\ov{q}})_{V_{1}}|^{2}}{\lambda_{1}} \\
        & +\sum_{i\notin I}\sum_{q\notin I,q\neq i}\frac{\ti{g}^{i\ov{i}}\ti{g}^{q\ov{q}}|(\ti{g}_{i\ov{q}})_{V_{1}}|^{2}}{\lambda_{1}}
          +2h_{1}'\sum_{k,l}\ti{g}^{i\ov{i}}|(\ti{g}_{k\ov{l}})_{i}|^{2}
          +\frac{C}{\ve}\sum_{i}\ti{g}^{i\ov{i}} \\
\leq {} & 2\sum_{\alpha>1}\frac{\ti{g}^{i\ov{i}}|\vp_{V_{\alpha}V_{1}i}|^{2}}{\lambda_{1}(\lambda_{1}-\lambda_{\alpha})}
          +\sum_{p\neq q}\frac{\ti{g}^{p\ov{p}}\ti{g}^{q\ov{q}}|(\ti{g}_{p\ov{q}})_{V_{1}}|^{2}}{\lambda_{1}}
          +2h_{1}'\sum_{k,l}\ti{g}^{i\ov{i}}|(\ti{g}_{k\ov{l}})_{i}|^{2}
          +\frac{C}{\ve}\sum_{i}\ti{g}^{i\ov{i}},
\end{split}
\end{equation*}
which completes Case 1.

\bigskip
\noindent

{\bf Case 2.} At $x_{0}$, we assume that
\begin{equation}\label{Case 2 equation 1}
\frac{1}{2}\left(\lambda_{1}+\sum_{\alpha>1}\lambda_{\alpha}\mu_{\alpha}^{2}\right)
< (1-\ve)\ti{g}_{\ti{1}\ov{\ti{1}}}.
\end{equation}

\bigskip

Using (\ref{Expression of tilde g}), (\ref{Definition of tilde e 1}) and (\ref{Expression of JV 1}), we compute
\begin{equation}\label{Case 2 equation 2}
\begin{split}
  0  <    \ti{g}_{\ti{1}\ov{\ti{1}}}
  =  {} & \ti{g}(\ti{e}_{1},\ov{\ti{e}}_{1}) \\
  =  {} & 1+\de\dbar\vp(\ti{e}_{1},\ov{\ti{e}}_{1}) \\[1mm]
\leq {} & 1+(\nabla^{2}\vp)(\ti{e}_{1},\ov{\ti{e}}_{1})+C \\
\leq {} & 1+\frac{1}{2}(\vp_{V_{1}V_{1}}+\vp_{JV_{1}JV_{1}})+C \\
  =  {} & \frac{1}{2}\left(\lambda_{1}+\sum_{\alpha>1}\lambda_{\alpha}\mu_{\alpha}^{2}\right)+C.
\end{split}
\end{equation}
Combining (\ref{Case 2 equation 1}) and (\ref{Case 2 equation 2}), it is clear that
\begin{equation*}
\ti{g}_{\ti{1}\ov{\ti{1}}}
\leq \frac{1}{2}\left(\lambda_{1}+\sum_{\alpha>1}\lambda_{\alpha}\mu_{\alpha}^{2}\right)+C
\leq (1-\ve)\ti{g}_{\ti{1}\ov{\ti{1}}}+C,
\end{equation*}
which implies
\begin{equation}\label{Case 2 equation 3}
\ti{g}_{\ti{1}\ov{\ti{1}}} \leq \frac{C}{\ve}.
\end{equation}
Using (\ref{Case 2 equation 2}) again, we have
\begin{equation}\label{Case 2 equation 4}
\lambda_{1}-\sum_{\alpha>1}\lambda_{\alpha}\mu_{\alpha}^{2}
\leq 2\lambda_{1}+C
\leq 2(1+\ve^{2})\lambda_{1},
\end{equation}
as long as $\lambda_{1}\geq\frac{C}{\ve^{2}}$.

Now, we choose
\begin{equation*}
\gamma := \frac{1}{\ve^{2}}.
\end{equation*}
Combining (\ref{Case 2 equation 3}), (\ref{Case 2 equation 4}) and $\ve\in(0,\frac{1}{3})$, we have
\begin{equation}\label{Case 2 equation 5}
\begin{split}
        & (1-\ve)\left(1+\frac{1}{\gamma}\right)\left(\lambda_{1}-\sum_{\alpha>1}\lambda_{\alpha}\mu_{\alpha}^{2}\right)
          \sum_{i\notin I}\sum_{\alpha>1}\frac{\ti{g}^{i\ov{i}}|\vp_{V_{\alpha}V_{1}i}|^{2}}{\lambda_{1}^{2}(\lambda_{1}-\lambda_{\alpha})} \\
\leq {} & (1-\ve)(1+\ve^{2})(2+2\ve^{2})\lambda_{1}
          \sum_{i\notin I}\sum_{\alpha>1}\frac{\ti{g}^{i\ov{i}}|\vp_{V_{\alpha}V_{1}i}|^{2}}{\lambda_{1}^{2}(\lambda_{1}-\lambda_{\alpha})} \\
\leq {} & 2\sum_{i\notin I}\sum_{\alpha>1}\frac{\ti{g}^{i\ov{i}}|\vp_{V_{\alpha}V_{1}i}|^{2}}{\lambda_{1}(\lambda_{1}-\lambda_{\alpha})}.
\end{split}
\end{equation}
From (\ref{Case 2 equation 1}) and (\ref{Case 2 equation 3}), it follows that
\begin{equation}\label{Case 2 equation 6}
\begin{split}
        & (1-\ve)(1+\gamma)\ti{g}_{\ti{1}\ov{\ti{1}}}\sum_{i\notin I}\sum_{q\in I}
          \frac{2\ti{g}^{i\ov{i}}\ti{g}^{q\ov{q}}|(\ti{g}_{i\ov{q}})_{V_{1}}|^{2}}{\lambda_{1}^{2}} \\
\leq {} & (1-\ve)\left(1+\frac{1}{\ve^{2}}\right)\frac{C}{\ve}\sum_{i\notin I}\sum_{q\in I}
          \frac{2\ti{g}^{i\ov{i}}\ti{g}^{q\ov{q}}|(\ti{g}_{i\ov{q}})_{V_{1}}|^{2}}{\lambda_{1}^{2}} \\
\leq {} & \frac{C}{\ve^{3}}\sum_{i\notin I}\sum_{q\in I}
          \frac{2\ti{g}^{i\ov{i}}\ti{g}^{q\ov{q}}|(\ti{g}_{i\ov{q}})_{V_{1}}|^{2}}{\lambda_{1}^{2}} \\
\leq {} & 2\sum_{i\notin I}\sum_{q\in I}
          \frac{\ti{g}^{i\ov{i}}\ti{g}^{q\ov{q}}|(\ti{g}_{i\ov{q}})_{V_{1}}|^{2}}{\lambda_{1}},
\end{split}
\end{equation}
as long as $\lambda_{1}\geq\frac{C}{\ve^{3}}$. Substituting (\ref{Case 2 equation 5}) and (\ref{Case 2 equation 6}) into (\ref{Bad term 3 equation 4}), we get
\begin{equation*}
\begin{split}
K_{3}
\leq {} & 2\sum_{i\notin I}\sum_{\alpha>1}\frac{\ti{g}^{i\ov{i}}|\vp_{V_{\alpha}V_{1}i}|^{2}}
          {\lambda_{1}(\lambda_{1}-\lambda_{\alpha})}
          +2\sum_{i\notin I}\sum_{q\in I}
          \frac{\ti{g}^{i\ov{i}}\ti{g}^{q\ov{q}}|(\ti{g}_{i\ov{q}})_{V_{1}}|^{2}}{\lambda_{1}} \\
        & +\sum_{i\notin I}\sum_{q\notin I,q\neq i}\frac{\ti{g}^{i\ov{i}}\ti{g}^{q\ov{q}}|(\ti{g}_{i\ov{q}})_{V_{1}}|^{2}}{\lambda_{1}}
          +2h_{1}'\sum_{k,l}\ti{g}^{i\ov{i}}|(\ti{g}_{k\ov{l}})_{i}|^{2}
          +\frac{C}{\ve}\sum_{i}\ti{g}^{i\ov{i}} \\
\leq {} & 2\sum_{\alpha>1}\frac{\ti{g}^{i\ov{i}}|\vp_{V_{\alpha}V_{1}i}|^{2}}{\lambda_{1}(\lambda_{1}-\lambda_{\alpha})}
          +\sum_{p\neq q}\frac{\ti{g}^{p\ov{p}}\ti{g}^{q\ov{q}}|(\ti{g}_{p\ov{q}})_{V_{1}}|^{2}}{\lambda_{1}}
          +2h_{1}'\sum_{k,l}\ti{g}^{i\ov{i}}|(\ti{g}_{k\ov{l}})_{i}|^{2}
          +\frac{C}{\ve}\sum_{i}\ti{g}^{i\ov{i}},
\end{split}
\end{equation*}
which completes Case 2.
\end{proof}

Combining Lemma \ref{Bad term 1 and 2} and \ref{Bad term 3}, we obtain an upper bound of the "bad" third order term $K$:
\begin{equation}\label{Second order estimate equation 1}
\begin{split}
K  = {} & K_{1}+K_{2}+K_{3} \\
\leq {} & 2\sum_{\alpha>1}\frac{\ti{g}^{i\ov{i}}|\vp_{V_{\alpha}V_{1}i}|^{2}}{\lambda_{1}(\lambda_{1}-\lambda_{\alpha})}
          +\sum_{p\neq q}\frac{\ti{g}^{p\ov{p}}\ti{g}^{q\ov{q}}|(\ti{g}_{p\ov{q}})_{V_{1}}|^{2}}{\lambda_{1}}
          +2h_{1}'\sum_{k,l}\ti{g}^{i\ov{i}}|(\ti{g}_{k\ov{l}})_{i}|^{2} \\
        & +3(h_{1}')^{2}\ti{g}^{i\ov{i}}|(|\ti{\omega}|_{g}^{2})_{i}|^{2}
          +3(h_{2}')^{2}\ti{g}^{i\ov{i}}|(|\de\vp|_{g}^{2})_{i}|^{2} \\[1mm]
        & +6\ve A^{2}e^{-2A\vp}\ti{g}^{i\ov{i}}|\vp_{i}|^{2}+\frac{C}{\ve}\sum_{i}\ti{g}^{i\ov{i}}.
\end{split}
\end{equation}

Now we are in a position to prove Proposition \ref{Second order estimate}.

\begin{proof}[Proof of Proposition \ref{Second order estimate}]
Combining Lemma \ref{Main inequality}, (\ref{Bad term K}), (\ref{Second order estimate equation 1}) and (\ref{Property 1 of h}), it is clear that
\begin{equation*}
\begin{split}
0 \geq {} & h_{2}'\sum_{k}\ti{g}^{i\ov{i}}(|\vp_{ik}|^{2}+|\vp_{i\ov{k}}|^{2})
            +(A^{2}e^{-A\vp}-6\ve A^{2}e^{-2A\vp})\ti{g}^{i\ov{i}}|\vp_{i}|^{2} \\
          & +\left(Ae^{-A\vp}-\frac{C_{0}}{\ve}\right)\sum_{i}\ti{g}^{i\ov{i}}-Ane^{-A\vp},
\end{split}
\end{equation*}
where $C_{0}$ is a uniform constant. We choose
\begin{equation*}
A = 6C_{0}+1 \text{~and~} \ve = \frac{e^{A\vp(x_{0})}}{6}.
\end{equation*}
Recalling $\sup_{M}\vp=0$, we see that
\begin{equation*}
A^{2}e^{-A\vp}-\ve A^{2}e^{-2A\vp} \geq \frac{1}{6}
\text{~and~} Ae^{-A\vp}-\frac{C_{0}}{\ve} \geq 1.
\end{equation*}
It then follows that
\begin{equation}\label{Second order estimate equation 2}
h_{2}'\sum_{k}\ti{g}^{i\ov{i}}(|\vp_{ik}|^{2}+|\vp_{i\ov{k}}|^{2})+\sum_{i}\ti{g}^{i\ov{i}} \leq C.
\end{equation}
From $\sum_{i}\ti{g}^{i\ov{i}}\leq C$ and $\frac{\det\ti{g}}{\det g}\leq C$, we have $\ti{g}_{i\ov{i}}\leq C$ for each $i$. Combining this with (\ref{Property 2 of h}) and (\ref{Second order estimate equation 2}), we obtain $\lambda_{1}(x_{0})\leq C$. By (\ref{Second order estimate equation 4}), it is clear that
\begin{equation*}
\sup_{M}|\nabla^{2}\vp|_{g}+1 = M_{R} \leq C,
\end{equation*}
as desired.
\end{proof}

\section{Proofs of Theorem \ref{Main Theorem}, \ref{Application DMAE}, \ref{Application SMAE}, \ref{Application SMAE measure} and \ref{Application Dirichlet Problem}}
In this section, we prove Theorem \ref{Main Theorem}, \ref{Application DMAE}, \ref{Application SMAE}, \ref{Application SMAE measure} and \ref{Application Dirichlet Problem}.

\begin{proof}[Proof of Theorem \ref{Main Theorem}]
Theorem \ref{Main Theorem} is an immediate consequence of Proposition \ref{Zero order estimate}, \ref{First order estimate} and \ref{Second order estimate}.
\end{proof}

\begin{proof}[Proof of Theorem \ref{Application DMAE}]
By the assumptions of Theorem \ref{Application DMAE}, there exists a sequence of positive smooth function $f_{i}$ on $M$ such that
\begin{enumerate}
\item $\lim_{i\rightarrow\infty}\|f_{i}-f\|_{C^{0}}=0$;
\item for a uniform constant $C$,
\begin{equation*}
\sup_{M}f_{i} \leq C, \,\, \sup_{M}|\de(f_{i}^{\frac{1}{n}})|_{g}\leq C, \,\, \nabla^{2}(f_{i}^{\frac{1}{n}}) \geq -Cg.
\end{equation*}
\end{enumerate}
Using \cite[Theorem 1.1]{CTW16}, there exists a pair $(\vp_{i},b_{i})$ where $\vp_{i}\in C^{\infty}(M)$ and $b_{i}\in\mathbb{R}$, such that
\begin{equation}\label{Application DMAE equation 1}
\begin{split}
(\omega+\ddbar\vp_{i})^{n} = {} & f_{i}e^{b_{i}} \omega^{n}, \\
\quad \omega+\ddbar\vp_{i} > 0, \quad {} & \sup_{M}\vp_{i} = 0.
\end{split}
\end{equation}
We need to prove $|b_{i}|\leq C$. For the upper bound of $b_{i}$, by the arithmetic-geometric mean inequality, we obtain
\begin{equation}\label{Application DMAE equation 2}
\left(\frac{\det\ti{g}}{\det g}\right)^{\frac{1}{n}}
\leq \frac{1}{n}\left(n+\frac{n\ddbar\vp_{i}\wedge\omega^{n-1}}{\omega^{n}}\right)
\leq 1+\frac{(dJd\vp_{i})\wedge\omega^{n-1}}{2\omega^{n}}.
\end{equation}
Combining (\ref{Application DMAE equation 1}), (\ref{Application DMAE equation 2}) and the Stokes' formula, we compute
\begin{equation}\label{Application DMAE equation 3}
\begin{split}
\int_{M}f_{i}^{\frac{1}{n}}e^{\frac{b_{i}}{n}}\omega^{n}
\leq {} & \int_{M}\left(\frac{\det\ti{g}}{\det g}\right)^{\frac{1}{n}}\omega^{n} \\
\leq {} & \textrm{Vol}(M,\omega)+\frac{1}{2}\int_{M}(dJd\vp_{i})\wedge\omega^{n-1} \\
\leq {} & \textrm{Vol}(M,\omega)+\frac{1}{2}\int_{M}\vp_{i}(dJd\omega^{n-1}) \\
\leq {} & \textrm{Vol}(M,\omega)+C\int_{M}|\vp_{i}|\omega^{n}.
\end{split}
\end{equation}
Since $\lim_{i\rightarrow\infty}\|f_{i}-f\|_{C^{0}}=0$ and $f\not\equiv0$, we have
\begin{equation}\label{Application DMAE equation 4}
\frac{1}{2}\int_{M}f^{\frac{1}{n}}\omega^{n} \leq \int_{M}f_{i}^{\frac{1}{n}}\omega^{n} ~\text{~for sufficiently large $i$}.
\end{equation}
Using (\ref{Application DMAE equation 3}), (\ref{Application DMAE equation 4}) and Proposition \ref{L1 estimate}, it is clear that
\begin{equation}\label{Upper bound of b i}
e^{b_{i}} \leq \left(\frac{C}{\int_{M}f^{\frac{1}{n}}\omega^{n}}\right)^{n}.
\end{equation}
Next, we will prove $b_{i}\geq-C$. Let $x_{0}$ be the minimum point of $\vp_{i}$. By the maximum principle, we have
\begin{equation*}
e^{b_{i}}f_{i}(x_{0}) = \frac{(\omega+\ddbar\vp_{i})^{n}}{\omega^{n}}(x_{0}) \geq 1,
\end{equation*}
which implies
\begin{equation}\label{Lower bound of b i}
e^{b_{i}} \geq \frac{1}{\sup_{M}f_{i}} \geq \frac{1}{C}.
\end{equation}
Combining (\ref{Upper bound of b i}), (\ref{Lower bound of b i}) and Theorem \ref{Main Theorem}, we obtain
\begin{equation*}
|b_{i}|+\sup_{M}|\vp_{i}|+\sup_{M}|\de\vp_{i}|_{g}+\sup_{M}|\nabla^{2}\vp_{i}|_{g} \leq C.
\end{equation*}
After passing to a subsequence, we show the existence of $C^{1,1}$ solution to (\ref{DMAE}).
\end{proof}

\begin{proof}[Proof of Theorem \ref{Application SMAE}]
By (\ref{Application SMAE equation 4}), it is clear that $s\not\equiv 0$. For any $i\geq1$, we define
\begin{equation*}
f_{i} = (|s|_{h}^{2}+i^{-1})^{\frac{1}{2}}.
\end{equation*}
Thanks to Theorem \ref{Application DMAE}, it suffices to verify that
\begin{equation}\label{Application SMAE equation 1}
\sup_{M}|\de f_{i}|_{g} \leq C \text{~and~}
\nabla^{2}f_{i} \geq -Cg,
\end{equation}
for a constant $C$ which is independent of $i$. For any point $x_{0}\in M$, there exists a local section $s_{0}$ in a neighbourhood of $x_{0}$ such that $|s_{0}|_{h}^{2}\equiv1$. We write
\begin{equation*}
s=(s_{R}+\sqrt{-1}s_{I})s_{0},
\end{equation*}
where $s_{R}$ and $s_{I}$ are local functions near $x_{0}$. It then follows that
\begin{equation*}
|s|_{h}^{2} = s_{R}^{2}+s_{I}^{2}
\text{~and~}
f_{i} = (s_{R}^{2}+s_{I}^{2}+i^{-1})^{\frac{1}{2}}.
\end{equation*}
For any $g$-unit vector field $V$ near $x_{0}$, we compute
\begin{equation}\label{Application SMAE equation 5}
V(f_{i}) = \frac{s_{R}V(s_{R})+s_{I}V(s_{I})}{(s_{R}^{2}+s_{I}^{2}+i^{-1})^{\frac{1}{2}}},
\end{equation}
which implies
\begin{equation}\label{Application SMAE equation 2}
|V(f_{i})| \leq C.
\end{equation}
Applying $V$ to (\ref{Application SMAE equation 5}), we obtain
\begin{equation}\label{Application SMAE equation 3}
\begin{split}
VV(f_{i}) = {} & \frac{s_{R}VV(s_{R})+s_{I}VV(s_{I})+(V(s_{R}))^{2}+(V(s_{I}))^{2}}{(s_{R}^{2}+s_{I}^{2}+i^{-1})^{\frac{1}{2}}} \\
                 & -\frac{(s_{R}V(s_{R})+s_{I}V(s_{I}))^{2}}{(s_{R}^{2}+s_{I}^{2}+i^{-1})^{\frac{3}{2}}} \\
         \geq {} & \frac{s_{R}VV(s_{R})+s_{I}VV(s_{I})}{(s_{R}^{2}+s_{I}^{2}+i^{-1})^{\frac{1}{2}}}
                   +\frac{(s_{R}V(s_{I})+s_{I}V(s_{R}))^{2}}{(s_{R}^{2}+s_{I}^{2}+i^{-1})^{\frac{3}{2}}} \\[2mm]
         \geq {} & -C.
\end{split}
\end{equation}
Since $x_{0}$ and $V$ are arbitrary, (\ref{Application SMAE equation 1}) follows from (\ref{Application SMAE equation 2}) and (\ref{Application SMAE equation 3}).
\end{proof}

We will prove Theorem \ref{Application SMAE measure} by means of blow-up construction. For the reader's convenience, let us recall its definition first. Let $\ti{M}$ be the blow-up of $M$ at $p$ and $\pi:\ti{M}\rightarrow M$ be the projection map. We denote the exceptional divisor by $E$ (i.e., $E=\pi^{-1}(p)$). We fix a coordinate chart $(U;\{z^{i}\}_{i=1}^{n})$ centered at $p$, which we identify via $\{z^{i}\}_{i=1}^{n}$ with the unit ball $B_{1}\subset\mathbb{C}^{n}$. By the exposition in \cite{GH78}, we identify $\pi^{-1}(B_{1})$ with $\ti{U}$ given by
\begin{equation*}
\ti{U} = \{ (z,l)\in B_{1}\times\mathbb{CP}^{n-1}~|~z^{i}l^{j}=z^{j}l^{i} \},
\end{equation*}
where $l=[l^{1},\cdots,l^{n}]\in\mathbb{CP}^{n-1}$. We set
\begin{equation*}
\ti{U}_{i} = \{ (z,l)\in B_{1}\times\mathbb{CP}^{n-1}~|~l^{i}\neq0 \}.
\end{equation*}
In $\ti{U}_{i}$, we have local coordinates $\{w_{i}^{j}\}_{j=1}^{n}$:
\begin{equation*}
w_{i}^{i} = z^{i} ~\text{~and~}~ w_{i}^{j} = \frac{l^{j}}{l^{i}} \text{~for $j\neq i$}.
\end{equation*}
Hence, $\{(\ti{U}_{i},\{w_{i}^{j}\}_{j=1}^{n})\}$ is a family of coordinate charts satisfying
\begin{equation*}
\ti{U} = \bigcup_{i=1}^{n}\ti{U}_{i}.
\end{equation*}
The projection map $\pi:\ti{M}\rightarrow M$ is given in $\ti{U}_{i}$ by
\begin{equation}\label{Blow-up equation 1}
(w_{i}^{1},\cdots,w_{i}^{n})\rightarrow(w_{i}^{i}w_{i}^{1},\cdots,w_{i}^{i},\cdots,w_{i}^{i}w_{i}^{n})
\end{equation}
and $E\cap\ti{U}_{i}$ is given by
\begin{equation*}
E\cap\ti{U}_{i} = \{ (z,l)\in B_{1}\times\mathbb{CP}^{n-1}~|~w_{i}^{i}=z^{i}=0 \}.
\end{equation*}
The line bundle $[E]$ over $\ti{U}$ has transition functions
\begin{equation*}
t_{ij} = \frac{z^{i}}{z^{j}} \text{~on $\ti{U}_{i}\cap\ti{U}_{j}$}.
\end{equation*}
Let $s$ be the global section of $[E]$ over $\ti{M}$ by setting
\begin{equation*}
s =
\left\{ \begin{array}{ll}
z^{i} ~\text{~on $\ti{U}_{i}$}, \\[1mm]
1 ~\text{~\,\,on $\ti{M}\setminus\pi^{-1}(B_{\frac{1}{2}})$}.
\end{array}\right.
\end{equation*}
It follows that $\{s=0\}=E$. We construct a Hermitian metric $h$ on $[E]$ as follows. Let $h_{1}$ be the Hermitian metric over $\ti{U}$ defined by
\begin{equation*}
h_{1} = \frac{\sum_{j=1}^{n}|l^{j}|^{2}}{|l^{i}|^{2}} ~\text{~on $\ti{U}_{i}$},
\end{equation*}
and let $h_{2}$ be the Hermitian metric over $\ti{M}\setminus E$ such that $|s|_{h}^{2}=1$. Then we define
\begin{equation*}
h = \rho_{1}h_{1}+\rho_{2}h_{2},
\end{equation*}
where $\{\rho_{1},\rho_{2}\}$ is a partition of unity for the cover $\{\pi^{-1}(B_{1}),\ti{M}\setminus\pi^{-1}(B_{\frac{1}{2}})\}$ of $\ti{M}$. It follows that
\begin{equation*}
h = h_{1} \text{~on $\pi^{-1}(B_{\frac{1}{2}})$}.
\end{equation*}
If $\pi(\ti{x})=(z^{1},\cdots,z^{n})\in B_{\frac{1}{2}}$, then we have
\begin{equation}\label{Blow-up equation 2}
|s|_{h}^{2}(\ti{x}) = \sum_{i=1}^{n}|z^{i}|^{2} =: |z|^{2}.
\end{equation}
On $\pi^{-1}(B_{\frac{1}{2}}\setminus\{0\})$, the curvature $R(h)$ of the Hermitian metric $h$ is given by
\begin{equation*}
R(h) = -\frac{\sqrt{-1}}{2\pi}\de\dbar\log\left(\sum_{i=1}^{n}|z^{i}|^{2}\right)
     = -\frac{\sqrt{-1}}{2\pi}\de\dbar\log|z|^{2}.
\end{equation*}
When $\ve$ is sufficient small,
\begin{equation}\label{Blow-up equation 3}
\ti{\omega} = \pi^{*}\omega-\ve R(h)
\end{equation}
is a K\"{a}hler form on $\ti{M}$ (see \cite[p.178]{GH78}). Moreover, the function $\frac{(\pi^{*}\omega)^{n}}{\ti{\omega}^{n}}$ has analytic zeros of the form $|s|_{h}^{2n-2}$. More precisely, we have the following lemma.

\begin{lemma}\label{Blow-up lemma}
There exists a smooth function $\ti{F}$ on $\ti{M}$ such that
\begin{equation}\label{Blow-up lemma equation 1}
(\pi^{*}\omega)^{n} = |s|_{h}^{2n-2}e^{\ti{F}}\ti{\omega}^{n}.
\end{equation}
\end{lemma}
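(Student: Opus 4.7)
The plan is to show that the ratio $(\pi^*\omega)^n/\ti{\omega}^n$ is a smooth nonnegative function on $\ti{M}$ whose vanishing along $E$ is exactly of order $2(n-1)$, matching the vanishing of $|s|_h^{2n-2}$. Once this is established, the quotient $(\pi^*\omega)^n/(|s|_h^{2n-2}\,\ti{\omega}^n)$ is a globally smooth and strictly positive function on $\ti{M}$, and I take $\ti{F}$ to be its logarithm.

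On the open set $\ti{M}\setminus E$ the map $\pi$ is a biholomorphism onto $M\setminus\{p\}$, so $\pi^*\omega$ is K\"ahler there, $(\pi^*\omega)^n$ is a smooth positive volume form, and $|s|_h>0$; consequently the quotient is automatically smooth and positive on $\ti{M}\setminus E$, and only the behavior across $E$ requires work.

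Near $E$, I would work in the coordinate chart $\ti{U}_i$ introduced in the excerpt. From (\ref{Blow-up equation 1}) one reads off $\pi^*dz^i=dw_i^i$ and $\pi^*dz^j=w_i^i\,dw_i^j+w_i^j\,dw_i^i$ for $j\neq i$, so the holomorphic Jacobian determinant of $\pi$ on $\ti{U}_i$ equals $(w_i^i)^{n-1}$. Writing $\omega^n=(\mn)^n a\,dz^1\wedge d\ov{z}^1\wedge\cdots\wedge dz^n\wedge d\ov{z}^n$ on $B_1$ and $\ti{\omega}^n=(\mn)^n b\,dw_i^1\wedge d\ov{w}_i^1\wedge\cdots\wedge dw_i^n\wedge d\ov{w}_i^n$ on $\ti{U}_i$ with $a,b$ smooth and positive (the latter because $\ti{\omega}$ is K\"ahler on all of $\ti{M}$), one obtains
\[
\frac{(\pi^*\omega)^n}{\ti{\omega}^n}=|w_i^i|^{2(n-1)}\cdot\frac{\pi^*a}{b}
\]
throughout $\ti{U}_i$. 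Combining (\ref{Blow-up equation 2}) with $z^i=w_i^i$ and $z^j=w_i^i w_i^j$ for $j\neq i$ gives $|s|_h^2=|z|^2=|w_i^i|^2\rho_i$ on $\ti{U}_i\cap\pi^{-1}(B_{1/2})$, where $\rho_i:=1+\sum_{j\neq i}|w_i^j|^2$ is smooth and positive. Dividing, the factor $|w_i^i|^{2(n-1)}$ cancels exactly, yielding
\[
\frac{(\pi^*\omega)^n}{|s|_h^{2n-2}\,\ti{\omega}^n}=\frac{\pi^*a}{b\,\rho_i^{n-1}},
\]
which extends smoothly and positively across $\ti{U}_i\cap E$. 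These local smooth positive functions agree with the global quotient on the overlap with $\ti{M}\setminus E$, so they patch into a smooth positive function on all of $\ti{M}$; $\ti{F}$ is then its logarithm.

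The only substantive point is the Jacobian computation pinning down the vanishing order $2(n-1)$ of $(\pi^*\omega)^n$ along $E$; the exact cancellation with $|s|_h^{2n-2}$ is precisely the reason for the exponent appearing in (\ref{Blow-up lemma equation 1}), and no analytical difficulty arises beyond this coordinate calculation.
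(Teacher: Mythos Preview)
Your proposal is correct and follows essentially the same approach as the paper: reduce to a neighborhood of $E$, work in the chart $\ti{U}_i$, and show that $(\pi^*\omega)^n$ vanishes to order exactly $2(n-1)$ along $E$, matching $|s|_h^{2n-2}$. The only cosmetic difference is that the paper computes $(\pi^*\omega_{\textrm{Eucl}})^n$ by expanding the pulled-back $(1,1)$-form and taking its top power, whereas you extract the factor $|w_i^i|^{2(n-1)}$ directly from the holomorphic Jacobian determinant $(w_i^i)^{n-1}$ of $\pi$; the content is identical.
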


\begin{proof}
By the definition of blow-up construction, it is clear that
\begin{equation*}
\frac{(\pi^{*}\omega)^{n}}{\ti{\omega}^{n}}\neq0, \,\, |s|_{h}^{2}\neq0 ~\text{~on $\ti{M}\setminus E$}.
\end{equation*}
To prove Lemma \ref{Blow-up lemma}, it suffices to prove (\ref{Blow-up lemma equation 1}) near $E$. By the definition of $\ti{U}_{i}$, we have
\begin{equation*}
E \subset \bigcup_{i=1}^{n}\left(\ti{U}_{i}\cap\pi^{-1}(B_{\frac{1}{2}})\right).
\end{equation*}
Hence, our aim is to verify
\begin{equation}\label{Blow-up lemma equation 2}
(\pi^{*}\omega)^{n} = |s|_{h}^{2n-2}e^{\ti{F}}\ti{\omega}^{n} \text{~on $\ti{U}_{i}\cap\pi^{-1}(B_{\frac{1}{2}})$},
\text{~for each $i=1,2,\cdots,n$}.
\end{equation}
Without loss of generality, we only prove (\ref{Blow-up lemma equation 2}) when $i=1$. We use $\omega_{\textrm{Eucl}}$ and $\ti{\omega}_{\textrm{Eucl}}$ to denote the Euclidean metrics on $B_{1}$ and $\ti{U}_{1}$, i.e.,
\begin{equation*}
\omega_{\textrm{Eucl}} = \sqrt{-1}\sum_{j=1}^{n}dz^{j}\wedge d\ov{z^{j}}
\text{~and~}
\ti{\omega}_{\textrm{Eucl}} = \sqrt{-1}\sum_{j=1}^{n}dw_{1}^{j}\wedge d\ov{w_{1}^{j}}.
\end{equation*}
Using (\ref{Blow-up equation 1}), we compute
\begin{equation*}
\begin{split}
\pi^{*}\omega_{\textrm{Eucl}}
= {} & \pi^{*}\left(\sqrt{-1}\sum_{j=1}^{n}dz^{j}\wedge d\ov{z^{j}}\right) \\
= {} & \sqrt{-1}dw_{1}^{1}\wedge d\ov{w_{1}^{1}}+\sqrt{-1}\sum_{j=2}^{n}d(w_{1}^{1}w_{1}^{j})\wedge d\ov{(w_{1}^{1}w_{1}^{j})} \\
= {} & \sqrt{-1}\left(1+\sum_{j=2}^{n}|w_{1}^{j}|^{2}\right)dw_{1}^{1}\wedge d\ov{w_{1}^{1}}
       +\sqrt{-1}\sum_{j=2}^{n}w_{1}^{j}\ov{w_{1}^{1}}dw_{1}^{1}\wedge d\ov{w_{1}^{j}} \\
     & +\sqrt{-1}\sum_{j=2}^{n}w_{1}^{1}\ov{w_{1}^{j}}dw_{1}^{j}\wedge d\ov{w_{1}^{1}}
       +\sqrt{-1}\sum_{j=2}^{n}|w_{1}^{1}|^{2}dw_{1}^{j}\wedge d\ov{w_{1}^{j}}.
\end{split}
\end{equation*}
By direct calculation, we obtain
\begin{equation}\label{Blow-up lemma equation 3}
\begin{split}
(\pi^{*}\omega_{\textrm{Eucl}})^{n}
= {} & |w_{1}^{1}|^{2n-2}\left(1+\sum_{k=2}^{n}|w_{1}^{k}|^{2}\right)\left(\sqrt{-1}\sum_{j=1}^{n}dw_{1}^{j}\wedge d\ov{w_{1}^{j}}\right)^{n} \\
     & -|w_{1}^{1}|^{2n-2}\left(\sum_{k=2}^{n}|w_{1}^{k}|^{2}\right)\left(\sqrt{-1}\sum_{j=1}^{n}dw_{1}^{j}\wedge d\ov{w_{1}^{j}}\right)^{n} \\
= {} & |w_{1}^{1}|^{2n-2}\ti{\omega}_{\textrm{Eucl}}^{n}.
\end{split}
\end{equation}
By (\ref{Blow-up equation 1}) and (\ref{Blow-up equation 2}), it is clear that
\begin{equation}\label{Blow-up lemma equation 4}
|s|_{h}^{2}(w_{1}^{1},\cdots,w_{1}^{n})
= |w_{1}^{1}|^{2}\left(1+\sum_{j=2}^{n}|w_{1}^{j}|^{2}\right).
\end{equation}
Combining (\ref{Blow-up lemma equation 3}) and (\ref{Blow-up lemma equation 4}), we have
\begin{equation*}
(\pi^{*}\omega_{\textrm{Eucl}})^{n}
= |s|_{h}^{2n-2}\left(1+\sum_{j=2}^{n}|w_{1}^{j}|^{2}\right)^{-n+1}\ti{\omega}_{\textrm{Eucl}}^{n}.
\end{equation*}
It follows that
\begin{equation*}
\begin{split}
(\pi^{*}\omega)^{n}
= {} & \pi^{*}\left(\frac{\omega^{n}}{\omega_{\textrm{Eucl}}^{n}}\right)(\pi^{*}\omega_{\textrm{Eucl}})^{n} \\
= {} & \pi^{*}\left(\frac{\omega^{n}}{\omega_{\textrm{Eucl}}^{n}}\right)|s|_{h}^{2n-2}
       \left(1+\sum_{j=2}^{n}|w_{1}^{j}|^{2}\right)^{-n+1}\ti{\omega}_{\textrm{Eucl}}^{n} \\
= {} & |s|_{h}^{2n-2}\pi^{*}\left(\frac{\omega^{n}}{\omega_{\textrm{Eucl}}^{n}}\right)
       \left(1+\sum_{j=2}^{n}|w_{1}^{j}|^{2}\right)^{-n+1}\left(\frac{\ti{\omega}_{\textrm{Eucl}}^{n}}{\ti{\omega}^{n}}\right)\ti{\omega}^{n},
\end{split}
\end{equation*}
which implies (\ref{Blow-up lemma equation 2}), as desired.
\end{proof}

Now we are in a position to prove Theorem \ref{Application SMAE measure}.

\begin{proof}[Proof of Theorem \ref{Application SMAE measure}]
For convenience, we use the same notations as above. To prove Theorem \ref{Application SMAE measure}, we follow the approach of \cite{PS14}. By (\ref{Blow-up equation 3}), when $\ve_{0}$ is sufficiently small,
\begin{equation}\label{Application SMAE measure equation 1}
\ti{\omega} = \pi^{*}\omega+\ve_{0}\ddbar\left(\rho_{1}\log|z|^{2}+\rho_{2}\right)
\end{equation}
can be extended to a smooth K\"{a}hler form on $\ti{M}$. We still denote it by $\ti{\omega}$. By Lemma \ref{Blow-up lemma}, there exists a smooth function $\ti{F}$ on $\ti{M}$ such that
\begin{equation}\label{Application SMAE measure equation 2}
(\pi^{*}\omega)^{n} = |s|_{h}^{2n-2}e^{\ti{F}}\ti{\omega}^{n}.
\end{equation}
Thanks to Theorem \ref{Application SMAE}, there exists a pair $(\ti{\vp},b)$ where $\ti{\vp}\in C^{1,1}(\ti{M})$ and $b\in\mathbb{R}$, such that
\begin{equation*}
(\ti{\omega}+\ddbar\ti{\vp})^{n} = |s|_{h}^{2n-2}e^{F+\ti{F}+b}\ti{\omega}^{n}.
\end{equation*}
Combining this with (\ref{Application SMAE measure equation 2}), we see that
\begin{equation*}
(\ti{\omega}+\ddbar\ti{\vp})^{n} = e^{F+b}(\pi^{*}\omega)^{n}.
\end{equation*}
Restricting this to $\ti{M}\setminus E$ and using (\ref{Application SMAE measure equation 1}), it is clear that
\begin{equation*}
\left(\pi^{*}\omega+\ddbar\big(\ve_{0}(\rho_{1}\log|z|^{2}+\rho_{2})+\ti{\vp}\big)\right)^{n} = e^{F+b}(\pi^{*}\omega)^{n}
\text{~on $M\setminus\{p\}$.}
\end{equation*}
Defining $\vp=\ve_{0}\left(\rho_{1}\log|z|^{2}+\rho_{2}\right)+\ti{\vp}$, we obtain
\begin{equation*}
(\omega+\ddbar\vp)^{n} = (\ve_{0}\delta_{p}+e^{F+b})\omega^{n}.
\end{equation*}
Since $\int_{M}e^{F}\omega^{n}=\int_{M}\omega^{n}=1$, we have $e^{b}=1-\ve_{0}$. Then $\vp$ is the desired solution.
\end{proof}

\begin{proof}[Proof of Theorem \ref{Application Dirichlet Problem}]
It suffices to establish the boundary estimate. The zero and first order estimate were proved in \cite[Section 3]{Blocki12}. When $\de M$ is pseudoconcave, combining \cite[Lemma 7.16]{Boucksom12} and the argument of \cite[Theorem 3.2']{Blocki12}, we obtain the second order estimate. When $\de M$ is Levi-flat, the second order estimate was proved in \cite[Theorem 3.2']{Blocki12}.
\end{proof}

\section{Proof of Theorem \ref{C11 regularity of geodesics}}
In this section, we give the proof of Theorem \ref{C11 regularity of geodesics}. First, we generalize Proposition \ref{Second order estimate} in a slightly more setting. Let $(M,\omega,J)$ be a compact almost Hermitian manifold of real dimension $2n$, with nonempty smooth boundary. Let $\{e_{i}\}_{i=1}^{n}$ be a local frame for $T_{\mathbb{C}}^{(1,0)}M$. By (\ref{Expression of tilde g}), the complex Monge-Amp\`{e}re equation (\ref{CMAE}) can be expressed as
\begin{equation*}
\left\{ \begin{array}{ll}
  \det(g_{i\ov{j}}+\vp_{i\ov{j}}+S_{i\ov{j}}^{p}\vp_{p}+S_{i\ov{j}}^{\ov{p}}\vp_{\ov{p}}) = f\det(g_{i\ov{j}}), \\[2mm]
  g_{i\ov{j}}+\vp_{i\ov{j}}+S_{i\ov{j}}^{p}\vp_{p}+S_{i\ov{j}}^{\ov{p}}\vp_{\ov{p}}>0,
\end{array}\right.
\end{equation*}
where $\vp_{i\ov{j}}=(\nabla^{2}\vp)(e_{i},\ov{e}_{j})$, $\nabla$ is the Levi-Civita connection of $g$ and $S$ is a tensor field defined by (\ref{Definition of S}).

Motivated by this, we introduce the following complex Monge-Amp\`{e}re type equation
\begin{equation}\label{Complex Monge-Ampere type equation}
\left\{ \begin{array}{ll}
  \det(g_{i\ov{j}}+\vp_{i\ov{j}}+T_{i\ov{j}}^{p}\vp_{p}+T_{i\ov{j}}^{\ov{p}}\vp_{\ov{p}}) = f\det(g_{i\ov{j}}), \\[2mm]
  g_{i\ov{j}}+\vp_{i\ov{j}}+T_{i\ov{j}}^{p}\vp_{p}+T_{i\ov{j}}^{\ov{p}}\vp_{\ov{p}}>0,
\end{array}\right.
\end{equation}
where $T$ is a tensor field satisfying
\begin{equation}\label{Property of T}
\ov{T_{i\ov{j}}^{p}} = T_{j\ov{i}}^{\ov{p}} ~\text{~and~}~
\ov{T_{i\ov{j}}^{\ov{p}}} = T_{j\ov{i}}^{p}.
\end{equation}
The condition (\ref{Property of T}) can be regarded as an analog of (\ref{Property of S}).

\begin{proposition}\label{C11 interior estimate}
Let $\vp$ be a smooth solution of (\ref{Complex Monge-Ampere type equation}). Then there exists a constant $C$ depending only on $\sup_{M}|\vp|$, $\sup_{M}|\de\vp|_{g}$, $\sup_{\de M}|\nabla^{2}\vp|_{g}$, $(M,\omega,J)$, $\sup_{M}f$, $\sup_{M}|\de(f^{\frac{1}{n}})|_{g}$ and lower bound of $\nabla^{2}(f^{\frac{1}{n}})$ such that
\begin{equation*}
\sup_{M}|\nabla^{2}\vp|_{g} \leq C.
\end{equation*}
\end{proposition}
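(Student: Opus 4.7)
The plan is to mimic the proof of Proposition \ref{Second order estimate} almost verbatim, with two adjustments. First, equation (\ref{Complex Monge-Ampere type equation}) differs from the presentation of (\ref{CMAE}) in (\ref{Expression of tilde g}) only by replacing $S$ with $T$, and by hypothesis $T$ satisfies the same conjugation identity (\ref{Property of T}) that $S$ satisfies in (\ref{Property of S}). Second, since $\de M \neq \emptyset$, we must control the case when the test function attains its maximum on $\de M$.

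Concretely, I would consider the same auxiliary function
\begin{equation*}
Q = \log\lambda_{1}(\nabla^{2}\vp)+h_{1}(|\ti{\omega}|_{g}^{2})+h_{2}(|\de\vp|_{g}^{2})+e^{-A\vp}
\end{equation*}
on the set $D = \{\lambda_{1}(\nabla^{2}\vp) > 0\}$, with $h_{1}, h_{2}, A, M_{R}$ as in Section 4.1. If the maximum of $Q$ on $\overline{D}$ is attained at some $x_{0} \in \de M$, then the hypothesis bound on $\sup_{\de M}|\nabla^{2}\vp|_{g}$, together with the given bounds on $\sup_{M}|\vp|$ and $\sup_{M}|\de\vp|_{g}$, controls $\log \lambda_{1}$ and $e^{-A\vp}$ at $x_{0}$. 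Using $h_{1}(s) \geq -\frac{1}{3}\log(10 M_{R}^{2})$ and $h_{2}(s) \geq -\frac{1}{3}\log(1+\sup_{M}|\de\vp|_{g}^{2})$, one concludes $Q \leq C + \frac{2}{3}\log M_{R}$ pointwise on $M$, which combined with (\ref{Second order estimate equation 3}) gives $M_{R} \leq C M_{R}^{2/3} + C$, hence $M_{R} \leq C$.

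If instead the maximum is attained at an interior point $x_{0} \in D$, I would apply the maximum principle exactly as in Section 4. The crucial observation is that the tensor $S$ enters the proof of Proposition \ref{Second order estimate} only algebraically: through the expression (\ref{Expression of tilde g}) (which is mirrored by (\ref{Complex Monge-Ampere type equation})) and through the symmetry identity (\ref{Property of S}), which is used precisely in the two cancellation identities (\ref{First order estimate equation 10}) and (\ref{Main inequality equation 3}) arising from $\hat{Q}_{p}(x_{0}) = 0$. Since $T$ satisfies the identical symmetry (\ref{Property of T}), both cancellations go through with $T$ in place of $S$. All other ingredients — the commutator formula (\ref{Commutation formula}) from the Levi-Civita connection, the differentiation of $\log\det\ti{g} = n\log(f^{\frac{1}{n}})$, the arithmetic-geometric mean inequality (\ref{Arithmetic-geometric mean inequality}), the third-order bookkeeping in Lemmas \ref{Calculation 2}--\ref{Bad term 3}, and the final choice of $A$ and $\ve$ — are insensitive to any geometric origin of $S$ and depend only on the tensorial form of the equation.

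The main obstacle I expect is simply the careful audit: one must verify that no step in Section 4 silently invokes a property of $S$ beyond (\ref{Property of S}) (for instance, any antisymmetry, compatibility with $J$, or relation to the Nijenhuis tensor). Once this audit is complete, the interior argument transfers verbatim, and the boundary alternative is handled by the short comparison above, yielding the desired estimate.
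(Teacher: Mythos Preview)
Your proposal is correct and matches the paper's approach exactly: the paper's proof of this proposition is the single sentence ``replace $S$ by $T$ and repeat the proof of Proposition~\ref{Second order estimate},'' and your audit confirms that $S$ enters that proof only through the expression (\ref{Expression of tilde g}) and the conjugation symmetry (\ref{Property of S}), both of which hold for $T$. Your explicit treatment of the boundary alternative (comparing $Q(x_0)$ against $Q$ at a point where $\lambda_1$ is near $M_R$) is a detail the paper leaves implicit but which is indeed needed and works as you outline.
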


\begin{proof}
We omit the proof since it is almost identical to that of Proposition \ref{Second order estimate} (just replacing $S$ by $T$).
\end{proof}

Now we are in a position to prove Theorem \ref{C11 regularity of geodesics}.

\begin{proof}[Proof of Theorem \ref{C11 regularity of geodesics}]
To prove Theorem \ref{C11 regularity of geodesics}, it suffices to prove the existence of $C^{1,1}$ solution to the geodesic equation (\ref{Geodesic equation 2}). Our goal is to derive the $C^{1,1}$ estimate for the perturbation geodesic equation (\ref{Perturbation geodesic equation}). First, we define two Hermitian metrics on $N\times[1,\frac{3}{2}]$ by
\begin{equation}\label{C11 regularity of geodesics equation 1}
\omega = 2r^{-2}\omega_{c},~ \ti{\omega} = 2r^{-2}\Omega_{\psi},
\end{equation}
where $\omega_{c}$ is the K\"{a}hler form of $g_{c}$. Clearly, $(N\times[1,\frac{3}{2}],\omega)$ is a compact $(m+1)$-dimensional Hermitian manifold with nonempty smooth boundary. For convenience, let $g$, $\ti{g}$ be the corresponding Riemannian metrics of $\omega$, $\ti{\omega}$. We use $\nabla$ to denote the Levi-Civita connection of $g$. Let $\{e_{i}\}_{i=1}^{m+1}$ be a local frame for $T_{\mathbb{C}}^{(1,0)}M$. Recalling (\ref{Definition of S}), we have
\begin{equation*}
(\de\dbar\psi)(e_{i},\ov{e}_{j}) = \psi_{i\ov{j}}+S_{i\ov{j}}^{p}\psi_{p}+S_{i\ov{j}}^{\ov{p}}\psi_{\ov{p}},~\,
(\de\dbar r)(e_{i},\ov{e}_{j}) =  r_{i\ov{j}}+S_{i\ov{j}}^{p}r_{p}+S_{i\ov{j}}^{\ov{p}}r_{\ov{p}},
\end{equation*}
where $\psi_{i\ov{j}}=(\nabla^{2}\psi)(e_{i},\ov{e}_{j})$ and $r_{i\ov{j}}=(\nabla^{2}r)(e_{i},\ov{e}_{j})$. Since $\frac{\de}{\de r}$ is a vector field on $C(N)$, we write
\begin{equation*}
\frac{\de}{\de r} = V^{p}e_{p}+\ov{V^{p}}\ov{e}_{p}.
\end{equation*}
Combining (\ref{Definition of Omega psi}) and (\ref{C11 regularity of geodesics equation 1}), we compute
\begin{equation}\label{C11 regularity of geodesics equation 2}
\begin{split}
\ti{g}_{i\ov{j}}
= {} & g_{i\ov{j}}+(\de\dbar\psi)(e_{i},\ov{e}_{j})-\frac{\de\psi}{\de r}(\de\dbar r)(e_{i},\ov{e}_{j}) \\
= {} & g_{i\ov{j}}+\psi_{i\ov{j}}+S_{i\ov{j}}^{p}\psi_{p}+S_{i\ov{j}}^{\ov{p}}\psi_{\ov{p}} \\
     & -(V^{p}\psi_{p}+\ov{V^{p}}\psi_{\ov{p}})\left(r_{i\ov{j}}+S_{i\ov{j}}^{p}r_{p}+S_{i\ov{j}}^{\ov{p}}r_{\ov{p}}\right) \\
= {} & g_{i\ov{j}}+\psi_{i\ov{j}}+T_{i\ov{j}}^{p}\psi_{p}+T_{i\ov{j}}^{\ov{p}}\psi_{\ov{p}},
\end{split}
\end{equation}
where
\begin{equation*}
\begin{split}
     T_{i\ov{j}}^{p} = {} & S_{i\ov{j}}^{p}-\left(r_{i\ov{j}}+S_{i\ov{j}}^{p}r_{p}+S_{i\ov{j}}^{\ov{p}}r_{\ov{p}}\right)V^{p}, \\
T_{i\ov{j}}^{\ov{p}} = {} & S_{i\ov{j}}^{\ov{p}}-\left(r_{i\ov{j}}+S_{i\ov{j}}^{p}r_{p}+S_{i\ov{j}}^{\ov{p}}r_{\ov{p}}\right)\ov{V^{p}}.
\end{split}
\end{equation*}
It follows from (\ref{Property of S}) that the tensor field $T$ satisfies (\ref{Property of T}). By (\ref{C11 regularity of geodesics equation 2}), the perturbation geodesic equation (\ref{Perturbation geodesic equation}) is equivalent to
\begin{equation}\label{C11 regularity of geodesics equation 3}
\left\{ \begin{array}{ll}
  \det(g_{i\ov{j}}+\psi_{i\ov{j}}+T_{i\ov{j}}^{p}\psi_{p}+T_{i\ov{j}}^{\ov{p}}\psi_{\ov{p}}) = \ve f\det(g_{i\ov{j}}), \\[2mm]
  g_{i\ov{j}}+\psi_{i\ov{j}}+T_{i\ov{j}}^{p}\psi_{p}+T_{i\ov{j}}^{\ov{p}}\psi_{\ov{p}}>0.
\end{array}\right.
\end{equation}
By \cite[Theorem 1]{GZ12}, there exists a smooth solution of (\ref{Perturbation geodesic equation}), and we denote it by $\psi_{\ve}$. Combining \cite[Theorem 1, Proposition 3]{GZ12} with (\ref{C11 regularity of geodesics equation 1}) and $r\in[1,\frac{3}{2}]$, we obtain
\begin{equation}\label{C11 regularity of geodesics equation 4}
\|\psi_{\ve}\|_{C_{w}^{2}(N\times[1,\frac{3}{2}],g)}+\sup_{\de(N\times[1,\frac{3}{2}])}|\nabla^{2}\psi_{\ve}|_{g} \leq C,
\end{equation}
where $C$ does not depend on $\ve$. By the equivalence of (\ref{Perturbation geodesic equation}) and (\ref{C11 regularity of geodesics equation 3}), $\psi_{\ve}$ is also a smooth solution of (\ref{C11 regularity of geodesics equation 3}). Thanks to Proposition \ref{C11 interior estimate} and (\ref{C11 regularity of geodesics equation 4}), we obtain the $C^{1,1}$ estimate
\begin{equation*}
\sup_{N\times[1,\frac{3}{2}]}|\nabla^{2}\psi_{\ve}|_{g} \leq C,
\end{equation*}
where $C$ does not depend on $\ve$. Letting $\ve\rightarrow0$, we showed existence of $C^{1,1}$ solution to the geodesic equation (\ref{Geodesic equation 2}), as required.
\end{proof}

\end{document}